\documentclass[12pt]{amsart}

\usepackage[margin=1.15in]{geometry}
\usepackage[T1]{fontenc}
\usepackage[utf8]{inputenc}
\usepackage{microtype}
\usepackage{amsmath,amssymb,amsthm,mathtools}
\usepackage[dvipsnames]{xcolor}
\usepackage[colorlinks=true,linkcolor=blue,citecolor=blue,urlcolor=blue]{hyperref}
\usepackage{tikz-cd} 
\usepackage{lmodern}
\newtheorem{theorem}{Theorem}[section]
\newtheorem{proposition}[theorem]{Proposition} 
\newtheorem{corollary}[theorem]{Corollary} 
\newtheorem{lemma}[theorem]{Lemma}
 
\theoremstyle{definition}

\theoremstyle{remark}

\newtheorem{example}[theorem]{Example}

\newcommand{\K}{\mathbb K} 
\newcommand{\F}{\mathbb F}

\newcommand{\C}{\mathbb C}

\newcommand{\fg}{\mathfrak g}
\newcommand{\fh}{\mathfrak h}
\newcommand{\fa}{\mathfrak a}

\newcommand{\fq}{\mathfrak q}

\newcommand{\fp}{\mathfrak p}
\newcommand{\fm}{\mathfrak m}
\newcommand{\fk}{\mathfrak k}
\newcommand{\fs}{\mathfrak s}
\newcommand{\End}{\operatorname{End}}
\newcommand{\Der}{\operatorname{Der}}
\newcommand{\Lie}{\operatorname{Lie}}
\newcommand{\Span}{\operatorname{span}}
\newcommand{\ad}{\operatorname{ad}}
\newcommand{\Spec}{\operatorname{Spec}}
\newcommand{\Ass}{\operatorname{Ass}}
\newcommand{\Supp}{\operatorname{Supp}}
\newcommand{\Frac}{\operatorname{Frac}}
\newcommand{\LND}{\operatorname{LND}}
\newcommand{\LFD}{\operatorname{LFD}}
\newcommand{\trdeg}{\operatorname{trdeg}}
\newcommand{\Hom}{\operatorname{Hom}}

\newcommand{\Aut}{\operatorname{Aut}}

\long\def\red#1{{\color{red}#1}}

\newcommand{\MinSupp}{\operatorname{MinSupp}}
\theoremstyle{plain}

\newtheorem{introtheorem}{Theorem}

\newtheorem{introcorollary}{Corollary}

\newtheorem{introcorollaryb}{Corollary}

\newtheorem{introcor}[introtheorem]{Corollary}
\newcommand{\xc}[1]{\vspace{.2cm}

\noindent {\em #1} }

\title[Rigidity for locally finite derivations]{Rigidity for Lie algebras of locally finite derivations}

\author{Mohamed Ali Belabbas}
\address{Coordinated Science Laboratory,  Department of Electrical and Computer   Engineering and Department of Mathematics, University of Illinois at Urbana-Champaign}
\email{belabbas@illinois.edu}
\date{}

\subjclass[2020]{Primary: 17B66; Secondary: 13N15, 14R20, 93C10}
\keywords{Locally finite derivation, locally nilpotent derivation, algebraic vector field, automorphism group, Lie algebra of derivations}

\begin{document}

\begin{abstract}

Let $A$ be a finitely generated commutative algebra over a field $\mathbb K$ of characteristic zero. We prove that every finitely generated Lie subalgebra $L\subseteq\operatorname{Der}_{\mathbb K}(A)$ whose elements are locally finite on $A$ is finite-dimensional. Consequently, for a Lie subalgebra generated by finitely many locally finite derivations, the following are equivalent: it is finite-dimensional, it acts locally finitely on $A$, and all its elements are locally finite. A key ingredient is a second theorem of independent interest: every Lie subalgebra of $\operatorname{Der}_{\mathbb K}(A)$ whose elements are locally nilpotent is solvable; when $A$ is reduced, its derived length is at most $\dim A$, and this bound is sharp. For an affine variety $X$ over an algebraically closed field, we deduce that a subgroup of $\operatorname{Aut}(X)$ generated by finitely many connected algebraic subgroups is algebraic if and only if every element of the Lie algebra generated by their tangent algebras is locally finite. For two unipotent one-parameter subgroups, this provides an answer to a problem posed by Popov in 2005. Our results also characterize polynomial control systems admitting an exact finite-dimensional bilinear realization by polynomial observables containing the state coordinates. The proofs rest on the introduction of a cofinite ideal meeting the closure of every associated point of $\operatorname{Spec} A$: on the subalgebra of derivations vanishing to second order along the corresponding finite subscheme, local finiteness forces local nilpotence. The intersection of that subalgebra with $L$ is therefore solvable by the second theorem, and has finite codimension in $L$; together with local finiteness of the adjoint action, this yields finite-dimensionality.

\end{abstract}

\maketitle

\section{Introduction}\label{sec:intro}

Let $A$ be a finitely generated commutative algebra over a field of characteristic zero. Every element of $\Der_\K(A)$ is a regular vector field on $\Spec A$.  A derivation $D \in \Der_\K(A)$ is {\em locally finite on $A$}\footnote{The terms \emph{locally finite} and \emph{pointwise finite} (likewise \emph{locally nilpotent} and \emph{pointwise nilpotent}) both refer to the condition holding at each $a\in A$; for derivations the term ``locally'' has become standard, and we follow it.} if, for every $a\in A$, the space
$$
\Span_\K\{a,Da,D^2a,\ldots\}
$$
is finite-dimensional. It is {\em locally nilpotent on $A$} if for every $a\in A$ there is $n\geq1$ with $D^n a=0$. We put
\begin{align*}
\LFD(A)&:=\{D\in\Der_\K(A):D\text{ is locally finite on }A\} \mbox{ and }\\
\LND(A)&:=\{D\in\Der_\K(A):D\text{ is locally nilpotent on }A\}.
\end{align*}
We call  a Lie subalgebra $L\subseteq\Der_\K(A)$ {\em locally finite on $A$}, or simply a {\em locally finite subalgebra} when the algebra is clear from context, if $A$ is a locally finite $L$-module: every $a\in A$ is contained in some finite-dimensional subspace $U\subseteq A$ satisfying $D(U)\subseteq U$ for all $D\in L$. Thus {\em elementwise local finiteness} (i.e., inclusion in $\LFD(A)$) allows $U$ to depend on $D$ and $a$, whereas local finiteness of the action allows $U$ to depend only on $a$.  Our main result shows that every finitely generated Lie subalgebra $L\subseteq\LFD(A)$ is finite-dimensional. This rigidity result has several applications. Together with the other results established here, it answers questions of Popov~\cite{PopovProblems2005}, Kraft and Zaidenberg~\cite{KraftZaidenberg2024}, Arzhantsev and Zaidenberg~\cite{ArzhantsevZaidenberg2025}, and Chitayat, Daigle and Regeta~\cite{ChitayatDaigleRegeta2026}, which we discuss below. Our interest in this question was independently motivated by a seemingly unrelated problem arising in applied mathematics and engineering, namely in the super-linearization of control systems; we describe the algebraic side first, and return to the control-theoretic motivation at the end of this section.

Let $X$ be an affine variety over an algebraically closed field of characteristic zero. One of the ways to study its automorphism group $\Aut(X)$ is through its algebraic subgroups. A natural question is then whether the subgroup generated by finitely many algebraic subgroups is again algebraic. For two unipotent one-parameter subgroups of $\Aut(\mathbb A^n)$, a form of this question was posed by Popov~\cite[Problem~3.1]{PopovProblems2005}. 


Locally finite derivations are the infinitesimal counterpart of algebraic group actions. If an affine algebraic group $G$ acts regularly on an affine variety $X$, then $\K[X]$ is a locally finite $G$-module, i.e., every $f\in\K[X]$ is contained in a finite-dimensional $G$-stable subspace, and every element of $\Lie(G)$ acts as a locally finite derivation~\cite{BialynickiBirula1973, Freudenburg2006, springer1998linear}. Conversely, over an algebraically closed field of characteristic zero, the one-dimensional Lie algebra spanned by a locally finite derivation is contained in the Lie algebra of an algebraic subgroup of $\Aut(X)$; this is the one-dimensional case of Cohen and Draisma's integration theorem~\cite[Theorem~1]{CohenDraisma2003}.

Kraft and Zaidenberg have proved that a subgroup of $\Aut(X)$ generated by connected algebraic subgroups is algebraic exactly when the Lie algebra generated by the tangent algebras of these subgroups is finite-dimensional~\cite[Theorem~A]{KraftZaidenberg2024}. In view of this, the problem is whether finitely many such infinitesimal actions assemble into the action of a single algebraic group. In general, it is not the case: on $\K[x,y]$, the locally nilpotent derivations $x^2\partial_y$ and $y^2\partial_x$ generate an infinite-dimensional Lie algebra, and their Lie bracket is not locally finite. Indeed, $[x^2\partial_y,y^2\partial_x]=2x^2y\,\partial_x-2xy^2\,\partial_y$ maps $x^ay^b$ to $2(a-b)x^{a+1}y^{b+1}$; its iterates applied to $x$ are the monomials $2^mx^{m+1}y^m$, $m\geq0$, so the orbit of $x$ spans an infinite-dimensional subspace of $\K[x,y]$.

This leads to two questions. First, if the Lie algebra $L:=\Lie_\K(D_1,\ldots,D_m)$ generated by locally finite derivations $D_1,\ldots,D_m$ is assumed {\em finite-dimensional}, can the individual finite-dimensional orbits fail to assemble into {\em common} finite-dimensional $L$-invariant subspaces? This is the question raised by Kraft and Zaidenberg~\cite[Questions~2 and~3]{KraftZaidenberg2024}: if $L$ is finite-dimensional and generated by locally finite derivations, must $L$ {\em act} locally finitely on $A$?  It is the case: over $\C$, this follows from a lemma of Kac~\cite[\S1.2, Lemma~(b)]{Kac1985}, which appeared in a different context, namely the construction of groups from infinite-dimensional Lie algebras. Section~\ref{sec:forward} proves this over an arbitrary field of characteristic zero.

Second, can $L$ be infinite-dimensional while every element of $L$,  not only its generators, is locally finite? In the example above the two obstructions arrive together: $L$ is infinite-dimensional \emph{and} contains a non-locally finite element, namely the bracket of the generators. The question is whether they must. This question naturally leads to first considering the adjoint action of the derivations on $\Der_\K(A)$. We  show that if $D_i$ is locally finite on $A$, then $\ad_{D_i}$ is locally finite on $\Der_\K(A)$ by Lemma~\ref{lem:adjoint-lf}. However, this alone turns out to be insufficient to conclude that $L$ is finite-dimensional: Example~\ref{ex:not-derivations} exhibits two locally finite operators on a vector space whose generated Lie algebra acts locally finitely and has generators with nilpotent adjoint action, and is {\em nevertheless infinite-dimensional}. The adjoint action is one of the ingredients of the proof, but its local finiteness alone is insufficient; one must also use that the elements of $L$ are derivations of a finitely generated commutative algebra. Theorem~\ref{thm:main} answers the question in the negative, and it is the geometry of $\Spec A$ that allows one to settle it: it furnishes a finite closed subscheme along which a subalgebra of finite codimension in $L$ vanishes to second order, and on such derivations local finiteness collapses to local nilpotence. The two failure modes, namely $L$ being infinite-dimensional and $L$ containing an element that is not locally finite, therefore always occur together: they are the same obstruction. Equivalently, the only obstruction to finitely many connected algebraic subgroups generating an algebraic subgroup is an element of the generated Lie algebra that is not locally finite (Corollary~\ref{cor:algebraicity-generated-subgroups}).

\subsection*{Main results}

\begin{introtheorem}[Rigidity for Lie algebras of locally finite derivations]\label{thm:main}
Let $\K$ be a field of characteristic zero, and let $A$ be a finitely generated commutative $\K$-algebra. Every finitely generated Lie subalgebra $L\subseteq\LFD(A)$ is finite-dimensional.
\end{introtheorem}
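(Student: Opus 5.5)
The plan follows the strategy the abstract announces and has three ingredients:
\begin{itemize}
\item[(i)] local finiteness of the adjoint action (Lemma~\ref{lem:adjoint-lf});
\item[(ii)] a finite-codimension subalgebra $L_0\subseteq L$ on which local finiteness forces local nilpotence;
\item[(iii)] solvability of Lie algebras of locally nilpotent derivations.
\end{itemize}

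\emph{Step 1: the ideal $I$ and the subalgebra $L_0$.} Choose a cofinite ideal $I\subseteq A$ such that $V(I)$ meets the closure of every associated point of $\Spec A$. Concretely, for each associated prime $\mathfrak p$ pick a maximal ideal $\mathfrak m\supseteq\mathfrak p$, and let $I$ be the product of these maximal ideals. Set
$$L_0=\{D\in L: D(A)\subseteq I^2\}.$$
Since $A/I^2$ is finite-dimensional and $A$ is finitely generated, the condition on $D$ is determined by the finitely many values $D(a_j)$ modulo $I^2$ on generators $a_j$. Hence $L_0$ has finite codimension in $L$, and it is a Lie subalgebra because $[D,E](A)\subseteq D(I^2)+E(I^2)\subseteq I^2$ (as $D(I^2)\subseteq I\,D(I)\subseteq I^2$).

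\emph{Step 2: every $D\in L_0$ is locally nilpotent.} Let $D\in L_0\subseteq\LFD(A)$. By Jordan decomposition on finite-dimensional invariant subspaces (after base change to $\overline{\K}$, which preserves the hypotheses), $D=D_s+D_n$, where the semisimple part $D_s$ is again a derivation and is diagonalizable on $A\otimes\overline{\K}$ with eigenvalues $\lambda$. We must show that $D_s=0$.

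Since $D(A)\subseteq I^2$, $D$ preserves every power $I^k$ and acts on $I^k/I^{k+1}$ through $D(I^k)\subseteq I^{k+1}$, so it acts nilpotently on each $A/I^k$. Thus $D_s$ kills every $A/I^k$, i.e.\ $D_s(A)\subseteq\bigcap_k I^k$. This intersection is not automatically zero, and this is exactly where the associated-point condition enters. By Krull's intersection theorem, an element $a\in\bigcap_k I^k$ is killed by some $1+x$ with $x\in I$. The annihilator of $a$ is then contained in no associated prime lying in any $\mathfrak m\supseteq I$. Since every associated prime lies in some such $\mathfrak m$, the annihilator of $a$ meets the complement of every associated prime, so $a=0$. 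Hence $D_s=0$ and $D$ is locally nilpotent.

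This step is the main obstacle. One must check carefully that $D_s$ is a derivation whose eigen-decomposition is compatible with the $I$-adic filtration; I would first do this on a finite-dimensional $D$-stable subspace containing generators.

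\emph{Step 3: finite-dimensionality of $L$.} By the second theorem, $L_0\subseteq\LND(A)$ is solvable. Let $L$ be generated by $D_1,\dots,D_m$. By Lemma~\ref{lem:adjoint-lf}, each $\ad_{D_i}$ is locally finite on $\Der_\K(A)$, and so is $\ad_E$ for every $E\in L$, since $E\in\LFD(A)$.

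Choose a finite-dimensional complement $W$ of $L_0$ in $L$. We then want to show that $L$ is spanned by $W$ together with finitely many iterated brackets. I would argue by induction on the derived length of $L_0$, which is bounded by $\dim A$ when $A$ is reduced; in general I would reduce to the reduced case by working modulo the nilradical, or use solvability directly. The aim is to show that each derived term $L_0^{(k)}$ is spanned by the $\ad(L)$-orbits of finitely many elements.

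The key sub-claim is the following: if $M\subseteq L$ is an $\ad_L$-stable subspace of finite codimension, and $L$ is generated by elements with locally finite adjoint action, then $L/[M,M]$-type quotients are finitely generated as $\ad$-modules. Combining this with local finiteness of $\ad_{D_i}$ on each finitely generated piece, and with Engel-type nilpotence of $\ad$ on $L_0$ (each element of $L_0$ is locally nilpotent, so its adjoint action is locally nilpotent), each abelian layer $L_0^{(k)}/L_0^{(k+1)}$ is finite-dimensional. Therefore $L$ is finite-dimensional.
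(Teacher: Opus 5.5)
\textbf{Steps 1 and 2.} These are sound. Step 2 is not really the obstacle, though. You need neither the Jordan decomposition nor the fact that $D_s$ is a derivation. Let $U$ be the finite-dimensional span of the $D$-orbit of $a$. Since $\bigcap_k I^k=0$, we have $U\cap I^N=0$ for large $N$. On the other hand $D^Na\in U\cap I^{N+1}$, so $D^Na=0$.

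\textbf{The gap is in Step 3.} $L_0=L\cap\mathcal D_I^2$ is a subalgebra of finite codimension, but it is not an ideal of $L$. For $D\in L$ and $E\in L_0$ you only get $[D,E](A)\subseteq D(I^2)+I^2\subseteq I$, because $D$ need not preserve $I$. For example, on $\K[x,y]$ with $I=(x,y)$, we have $x^2\partial_y\in\mathcal D_I^2$ but $[\partial_x,x^2\partial_y]=2x\partial_y\notin\mathcal D_I^2$. Consequences:
\begin{enumerate}
\item The derived terms $L_0^{(k)}$ are not $\ad_L$-stable, and the layers $L_0^{(k)}/L_0^{(k+1)}$ are not $L$-modules. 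Your key sub-claim, which you rightly formulate for an $\ad_L$-stable $M$, therefore applies to nothing you have constructed.
\item Running the induction inside $L_0$ with its own adjoint action does not help. Nothing tells you that $L_0/L_0^{(k)}$ is finite-dimensional. Moreover, a finite-codimensional subalgebra of a finitely generated Lie algebra need not be finitely generated. In the free Lie algebra $F$ on $x,y$, the codimension-one subalgebra $\K y+[F,F]$ contains the independent elements $(\ad_x)^ny$, $n\ge0$, of $y$-degree one, while all brackets of its elements have $y$-degree at least two.
\item You cannot pass to the core of $L_0$ either. The largest ideal contained in a finite-codimensional subalgebra can have infinite codimension: $x\K[x]\partial_x$ has codimension one in the simple Lie algebra $\K[x]\partial_x$.
\end{enumerate}

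\textbf{The missing ingredient.} This is Petravchuk's lemma (Lemma~\ref{lem:Petravchuk}): a Lie algebra containing a solvable subalgebra of finite codimension contains a solvable ideal $J$ of finite codimension. Once you have $J$, your Step 3 becomes the argument of Proposition~\ref{prop:virtually-solvable}:
\begin{enumerate}
\item The $J^{(r)}$ are ideals of $L$.
\item If $L/J^{(r)}$ is finite-dimensional, then $J^{(r)}$ is finitely generated as an ideal (Lemma~\ref{lem:finite-codim-ideal-fg}).
\item So $J^{(r)}/J^{(r+1)}$ is a finitely generated module over the finite-dimensional Lie algebra $L/J^{(r)}$, with locally finite action, hence finite-dimensional.
\end{enumerate}

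Your observation that $\ad_E$ is locally finite for every $E\in L$, not only for the generators, is a genuine convenience here. With it, local finiteness of the $L/J^{(r)}$-action follows from the PBW argument alone, without the Fernando--Kac input used in Proposition~\ref{prop:fd-direction}. By contrast, the following play no role: the derived-length bound, the reduction modulo the nilradical, and the Engel-type nilpotence of $\ad$ on $L_0$. Solvability of $L_0$ is all that is needed.
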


Combined with results of Kac and Fernando, Theorem~\ref{thm:main} yields the equivalence used throughout the paper:

\begin{introcorollary}\label{cor:equiv}
Let $\K$ and $A$ be as in Theorem~\ref{thm:main}, and let $L\subseteq\Der_\K(A)$ be a Lie subalgebra generated by finitely many locally finite derivations. Then the following conditions are equivalent:
\begin{enumerate}
\item $\dim_\K L<\infty$;
\item $L$ acts locally finitely on $A$;
\item $L\subseteq\LFD(A)$.
\end{enumerate}
\end{introcorollary}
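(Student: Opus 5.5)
We prove the cycle (1)$\Rightarrow$(2)$\Rightarrow$(3)$\Rightarrow$(1). The implication (3)$\Rightarrow$(1) is exactly Theorem~\ref{thm:main}: $L$ is finitely generated by hypothesis, and (3) says $L\subseteq\LFD(A)$, so $\dim_\K L<\infty$. The implication (2)$\Rightarrow$(3) is immediate from the definitions. If every $a\in A$ lies in a finite-dimensional subspace $U$ with $D(U)\subseteq U$ for all $D\in L$, then for each fixed $D\in L$ the span of $a,Da,D^2a,\ldots$ lies in $U$. Hence each $D\in L$ is locally finite.

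The substantive step is (1)$\Rightarrow$(2), the Kraft--Zaidenberg question; this is where the results of Kac and Fernando enter, and the introduction says Section~\ref{sec:forward} proves it over arbitrary $\K$ of characteristic zero. Suppose $L=\Lie_\K(D_1,\ldots,D_m)$ is finite-dimensional and each $D_i$ is locally finite on $A$. Let $\mathcal U(L)$ be the universal enveloping algebra, acting on $A$. It suffices to show that $\mathcal U(L)\cdot a$ is finite-dimensional for each $a\in A$; that subspace is then the required common invariant $U$.

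We would first reduce to generators of $A$. Since $L$ acts by derivations, the set of $L$-finite elements of $A$ is a subalgebra: if $U,U'$ are finite-dimensional $L$-stable subspaces, then $UU'$ is finite-dimensional and $L$-stable by the Leibniz rule. So it is enough to show that the finitely many algebra generators of $A$ are $L$-finite.

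Next we use the Kac/Fernando lemma: for a Lie algebra acting on a module, the set of elements acting locally finitely forms a Lie subalgebra. Kac's lemma gives that if $x,y$ act locally finitely and $\ad_x$ is locally finite on the Lie algebra, then $[x,y]$ acts locally finitely; Fernando's result is the general subalgebra statement. Here $\ad$ is trivially locally finite because $\dim L<\infty$. Consequently every element of $L$ acts locally finitely.

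Now fix a basis $X_1,\ldots,X_n$ of $L$ consisting of locally finite elements, which we can obtain as iterated brackets of the $D_i$. By Poincar\'e--Birkhoff--Witt, $\mathcal U(L)$ is spanned by ordered monomials $X_1^{k_1}\cdots X_n^{k_n}$. For $a\in A$, set $V_n=\Span\{X_n^k a\}$, which is finite-dimensional. Then set $V_{n-1}=\Span\{X_{n-1}^jV_n\}$, again finite-dimensional since $X_{n-1}$ is locally finite and $V_n$ is finite-dimensional. Continuing down to $V_1$, we get that $V_1$ contains all ordered monomials applied to $a$, hence $V_1\supseteq\mathcal U(L)a$, and $\mathcal U(L)a$ is finite-dimensional. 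This PBW argument in fact bypasses the generator reduction, so the main obstacle is only the Kac-type lemma that brackets of locally finite elements remain locally finite. I would prove that lemma over general $\K$ as follows: decompose $L$ under $\ad_x$, and use the identity $x^N y=\sum_k\binom{N}{k}(\ad_x^k y)\,x^{N-k}$ to bound $\Span\{x^N y v\}$ by the finite-dimensional space $\sum_k(\ad_x^k y)\Span\{x^j v\}$.
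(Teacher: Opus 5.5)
Your three implications are arranged exactly as in the paper. $(3)\Rightarrow(1)$ is Theorem~\ref{thm:main}, $(2)\Rightarrow(3)$ is immediate, and for $(1)\Rightarrow(2)$ the paper's Proposition~\ref{prop:fd-direction} also combines the Fernando--Kac subalgebra theorem with your nested PBW argument.

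\textbf{The gap.} It lies in the step you yourself call the main obstacle: closure of the locally finite elements of $L$ under brackets, over an arbitrary field of characteristic zero. The identity $x^Ny=\sum_k\binom{N}{k}(\ad_x^ky)x^{N-k}$ only shows that $\Span_\K\{x^Nyv:N\ge0\}$ is finite-dimensional. In other words, it shows that the $x$-finite vectors form an $L$-stable subspace. Since $x$ is already assumed locally finite on all of $A$, this is vacuous here. What you need is that $\Span_\K\{[x,y]^Nv:N\ge0\}$ is finite-dimensional. But $[x,y]^N=(xy-yx)^N$ expands into words of length $2N$ with arbitrarily many alternations between $x$ and $y$. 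Commuting powers of $x$ past $y$ only produces the elements $\ad_x^ky\in L$, whose local finiteness is exactly what is unknown, so the argument is circular. You also cannot argue by linearity on an $\ad_x$-stable decomposition, because local finiteness is not preserved under sums. For example, $x^2\partial_y+y^2\partial_x$ is not locally finite on $\K[x,y]$, although both summands are locally nilpotent. Bracket closure is the genuinely nontrivial content of the Fernando--Kac theorem and does not follow from this commutation identity.

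\textbf{The repair.} This is what the paper does. Fix $a\in A$ and work with the cyclic module $M:=U(L)a$, which is finitely generated, as Proposition~\ref{prop:FK} requires. Pass to $\overline\K\otimes_\K M$ over an algebraic closure $\overline\K$ of $\K$. A $\K$-linear operator $T$ is locally finite on $M$ if and only if $1\otimes T$ is locally finite on $\overline\K\otimes_\K M$. One direction holds because orbits of $1\otimes T$ are spanned by scalar extensions of $T$-orbits; the other holds because scalar extension preserves linear independence. Hence the elements of $L$ that act locally finitely on $M$ form the preimage in $L$ of the Fernando--Kac subalgebra of $\overline\K\otimes_\K L$, and so they form a Lie subalgebra of $L$. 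This subalgebra contains the generators, so it is all of $L$. Your PBW nesting then gives $\dim_\K M<\infty$.
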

The implication $(1)\Rightarrow(2)$ is representation-theoretic and holds for an arbitrary commutative $\K$-algebra (see Proposition~\ref{prop:fd-direction}, the extension of Kac's lemma to an arbitrary field of characteristic zero); and $(2)\Rightarrow(3)$ is immediate.

Finite generation is essential: for $A:=\K[x,y]$, the Lie algebra $L:=\K[y]\partial_x$ is infinite-dimensional and contained in $\LND(A)$, while the $L$-submodule generated by $x$ contains $\K[y]$. Theorem~\ref{thm:main} gives $(3)\Rightarrow(1)$, and is where the geometry of $A$ enters, and where the main work of the paper lies.

Applied to the finitely generated subalgebras of an arbitrary Lie algebra of locally finite derivations, Theorem~\ref{thm:main} nonetheless yields a conclusion in which no finite-generation hypothesis is imposed on the Lie algebra.

\begin{introcorollaryb}\label{cor:global-lf}
Let $\K$ and $A$ be as in Theorem~\ref{thm:main} and let $\fh\subseteq\LFD(A)$ be a Lie subalgebra. Then every finitely generated Lie subalgebra of $\fh$ is finite-dimensional and acts locally finitely on $A$. Consequently $\fh$ is locally finite as a Lie algebra. Furthermore,  for every finite subset $S\subseteq\fh$ and every $a\in A$ there is a finite-dimensional subspace $U\subseteq A$ with $a\in U$ and $D(U)\subseteq U$ for all $D\in S$.
\end{introcorollaryb}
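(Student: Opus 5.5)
The plan is to derive the corollary directly from Theorem~\ref{thm:main} and Corollary~\ref{cor:equiv}, applied to finitely generated subalgebras of $\fh$.

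\emph{Step 1.} Let $L'\subseteq\fh$ be a finitely generated Lie subalgebra, say $L'=\Lie_\K(D_1,\ldots,D_m)$ with $D_i\in\fh$. Since $\fh\subseteq\LFD(A)$, we have $L'\subseteq\LFD(A)$. Theorem~\ref{thm:main} then gives $\dim_\K L'<\infty$. The generators $D_i$ are locally finite, so $L'$ satisfies the hypotheses of Corollary~\ref{cor:equiv}. The implication $(3)\Rightarrow(2)$, or equally $(1)\Rightarrow(2)$ via Proposition~\ref{prop:fd-direction}, shows that $L'$ acts locally finitely on $A$. This proves the first assertion.

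\emph{Step 2.} Local finiteness of $\fh$ as a Lie algebra means that every finitely generated subalgebra is finite-dimensional. This is exactly what Step~1 established, so nothing further is needed.

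\emph{Step 3.} Let $S\subseteq\fh$ be finite. Take $L':=\Lie_\K(S)$, which is a finitely generated subalgebra of $\fh$. By Step~1, $L'$ acts locally finitely on $A$. Hence for each $a\in A$ there is a finite-dimensional subspace $U\ni a$ that is stable under every element of $L'$, and in particular under every $D\in S$.

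No step is a genuine obstacle, since everything reduces to Theorem~\ref{thm:main}. The only point needing care is the hypothesis check in Step~1: Corollary~\ref{cor:equiv} requires $L'$ to be generated by finitely many locally finite derivations. This holds because the generators lie in $\fh\subseteq\LFD(A)$.
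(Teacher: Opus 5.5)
Your proposal is correct and follows essentially the same route as the paper: Theorem~\ref{thm:main} gives finite-dimensionality of each finitely generated subalgebra, Corollary~\ref{cor:equiv} (via Proposition~\ref{prop:fd-direction}) upgrades this to a locally finite action, and taking $\Lie_\K(S)$ for finite $S\subseteq\fh$ gives the last assertion.
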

\noindent The last assertion is termed {\em weak local finiteness} by Chitayat, Daigle and Regeta, who establish it under a solvability hypothesis and ask whether elementwise local finiteness implies it in general~\cite[Section~1]{ChitayatDaigleRegeta2026}. Corollary~\ref{cor:global-lf} answers their question affirmatively for finitely generated commutative algebras in characteristic zero, with no solvability hypothesis.

Our second main result concerns the structure of the Lie subalgebras of $\Der_\K(A)$ contained in $\LND(A)$; it also supplies the solvability step in the proof of Theorem~\ref{thm:main}.

\begin{introtheorem}[Solvability of locally nilpotent derivations]\label{thm:lnd-solvable}
Let $\K$ be a field of characteristic zero and let $A$ be a finitely generated commutative $\K$-algebra. Every Lie subalgebra $\fa\subseteq\LND(A)$ is solvable. If $A$ is reduced, the derived length $\operatorname{dl}(\fa)$ is at most the Krull dimension $\dim A$. More precisely, if $A$ is a domain, $B:=\bigcap_{D\in\fa}\ker D$, and $F:=\Frac(B)$, then $\operatorname{dl}(\fa)\leq\trdeg_F\Frac(A)$.
\end{introtheorem}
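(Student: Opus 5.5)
We argue by induction on $n:=\trdeg_F\Frac(A)$, treating the domain case first and then deducing the general statements. Throughout, $\fa\subseteq\LND(A)$ is a Lie subalgebra, $B=\bigcap_{D\in\fa}\ker D$ and $F=\Frac(B)$. Each $D\in\fa$ extends to a derivation of $K:=\Frac(A)$ vanishing on $F$. Let $V:=\fa\otimes F\subseteq\Der_F(K)$ be the $F$-span of $\fa$, i.e. $V=F\fa$. Since $[D,bE]=b[D,E]$ for $b\in B$, $D,E\in\fa$, this $F$-span is an $F$-Lie subalgebra, and $\dim_K K\fa\le n$.

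\emph{Key step (a common kernel element).} We first show that if $\fa\neq0$ there is $f\in A\setminus B$ with $Df\in B$ for all $D\in\fa$. Choose a finite-dimensional subspace $W\subseteq A$ not contained in $B$ and stable under $\fa$. This can be done by Engel-type reasoning: by Corollary~\ref{cor:global-lf}-style local finiteness, or directly using that each element is locally nilpotent. For general infinite-dimensional $\fa$, we instead take $f$ of minimal "degree" as follows. The set $\fa$ acts on $A$ by locally nilpotent operators, and the associative algebra generated acts on any finite-dimensional invariant subspace nilpotently by Engel/Jacobson. In the general case I would first reduce to finitely generated subalgebras $\fa'\subseteq\fa$: the derived length is bounded uniformly, and a Lie algebra whose finitely generated subalgebras are solvable of length $\le n$ is solvable of length $\le n$, since $\fa^{(n)}$ is the union of the $\fa'^{(n)}$. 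For finitely generated $\fa'$, Theorem~\ref{thm:main} is not available (it is proved from this), so the plan is to use the Jacobson--Engel theorem on a finite-dimensional $\fa'$-module. Such a module exists because a finite set of locally nilpotent derivations applied to a generator together with nested kernels gives one; this is the step I expect to be the main obstacle. It shows that $\fa'$ acts nilpotently on some nonzero invariant $W\not\subseteq B$, yielding $f\in W$ with $\fa'f\subseteq W\cap\ker\fa'=W\cap B$.

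\emph{Induction.} Given such $f$, consider the ideal $\fa_1:=\{D\in\fa: Df=0\}$. The map $D\mapsto Df\in B$ is $B$-linear in the sense above, and $[D,E]f=D(Ef)-E(Df)=0$ because $Ef,Df\in B$. Hence $[\fa,\fa]\subseteq\fa_1$. Now $\fa_1\subseteq\LND(A)$ has common kernel $B_1\supseteq B[f]$, with $f$ transcendental over $F$; this holds since $f\notin B$ and $\ker$ of an LND is factorially closed, hence algebraically closed in $A$. Thus $\trdeg_{F_1}K\le n-1$, and by induction $\operatorname{dl}(\fa_1)\le n-1$, so $\operatorname{dl}(\fa)\le n$. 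The base case is $n=0$: then $A$ is algebraic over $B$, which forces $\fa=0$ because kernels of LNDs are algebraically closed in $A$.

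\emph{General $A$.} For reduced $A$ with minimal primes $\fp_1,\dots,\fp_r$, each LND preserves each $\fp_i$ (minimal primes are stable under derivations in characteristic zero). The induced derivations of $A/\fp_i$ are locally nilpotent, and $\fa$ embeds into $\prod_i\fa|_{A/\fp_i}$, giving $\operatorname{dl}\le\max_i\dim A/\fp_i=\dim A$. For non-reduced $A$ with nilradical $N$, $N^k=0$, the derivations preserve the filtration $N^j$. The kernel of $\fa\to\Der(A/N)$ consists of derivations mapping $A$ into $N$, so they shift the $N$-adic filtration. One checks that this kernel is solvable via the associated graded pieces $N^j/N^{j+1}$, which are finitely generated modules over $A/N$ on which brackets raise filtration degree. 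This gives solvability, without the sharp bound.
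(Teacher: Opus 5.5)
Your induction on $\trdeg$ is sound once a common slice exists: $[\fa,\fa]\subseteq\fa_1$, and $f$ is transcendental over $F$ because $B$, an intersection of kernels of locally nilpotent derivations, is algebraically closed in $A$. Your reduced-case reduction also matches the paper's.

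\textbf{First gap: the common slice.} Its existence is the entire difficulty, and you do not prove it. For a finitely generated $\fa'\subseteq\LND(A)$ you need a finite-dimensional $\fa'$-stable $W\not\subseteq B'$, and neither justification you offer works. Corollary~\ref{cor:global-lf}, like Theorem~\ref{thm:main}, is deduced from the statement you are proving, so citing it is circular. Moreover, finitely many locally nilpotent derivations do not by themselves yield finite-dimensional invariant subspaces. For instance, $x^2\partial_y$ and $y^2\partial_x$ are each locally nilpotent on $\K[x,y]$, yet words in them applied to $x$ span an infinite-dimensional space, since it contains $x^{m+1}y^m$ for all $m$. Any valid argument must use that \emph{every} element of $\fa'$ is locally nilpotent, and Engel or Jacobson can only enter after a finite-dimensional invariant subspace is in hand.

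That missing input is exactly the Bezushchak--Petravchuk--Zelmanov theorem the paper imports: Lie subalgebras of $\LND(A)$ are locally nilpotent. With it your plan closes. A finitely generated nilpotent Lie algebra is finite-dimensional, so Proposition~\ref{prop:fd-direction} makes $\fa'$ act locally finitely. An Engel flag in a finite-dimensional $\fa'$-stable $W$ containing some $a\notin B'$ then produces your $f$, and your induction gives $\operatorname{dl}(\fa)\le\trdeg_F\Frac(A)$. This is an elementary substitute for the Makedonskyi--Petravchuk rank bound that the paper uses at this point.

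\textbf{Second gap: the non-reduced step.} Derivations with $D(A)\subseteq\mathcal N$ do not shift the $\mathcal N$-adic filtration; the Leibniz rule only gives $D(\mathcal N^j)\subseteq\mathcal N^j$. On $A=\K[x,\epsilon_1,\epsilon_2]/(\epsilon_1,\epsilon_2)^2$, the derivation induced by $\epsilon_1\partial_{\epsilon_2}$ is locally nilpotent with image in $\mathcal N$. Yet it acts nontrivially on $\mathcal N/\mathcal N^2=\mathcal N$, sending $\epsilon_2\mapsto\epsilon_1$.

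What is true is that such derivations act $R$-linearly and nilpotently on each $\mathcal N^j/\mathcal N^{j+1}$, with $R=A/\mathcal N$. So you need an Engel-type theorem for $\K$-Lie algebras of nilpotent $R$-linear endomorphisms of a finitely generated $R$-module. That is the paper's Lemma~\ref{lem:module-engel}, and it is not routine when the module has torsion. Its proof localizes at the primes of $\MinSupp(M)$ and applies Engel on the residue-field graded pieces via Lemma~\ref{lem:nilpotent-scalar-extension}. It then inducts on $\dim\Supp$ to handle the submodule $K(M)$ that these localizations cannot see. Your ``one checks'' hides all of this, and the reason you give for it is incorrect.
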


No finite-generation hypothesis is imposed on $\fa$, and the conclusion cannot be upgraded to nilpotence: $\K\partial_x\oplus\K[x]\partial_y\subseteq\LND(\K[x,y])$, while its lower central series stabilizes at $\K[x]\partial_y$. The dimension bound requires $A$ to be reduced: $L=\K\partial_x\oplus\K[x]\epsilon\partial_x$ on $\K[x,\epsilon]/(\epsilon^2)$ has derived length two, but $\dim A=1$. The bound is sharp in every dimension (Lemma~\ref{lem:triangular-dl}).

\subsection*{Consequences of the main results}

Corollary~\ref{cor:equiv} yields the algebraicity criterion announced above:

\begin{introcor}[Algebraicity of groups generated by algebraic subgroups]\label{cor:algebraicity-generated-subgroups}
Assume that $\K$ is algebraically closed of characteristic zero. Let $X$ be an affine $\K$-variety, put $A:=\K[X]$, and let $G_1,\ldots,G_m\subseteq\Aut(X)$ be connected algebraic subgroups. Let $G:=\langle G_1,\ldots,G_m\rangle\subseteq\Aut(X)$ be the subgroup they generate, and set
$$
L:=\Lie_\K\bigl(\Lie(G_1)\cup\cdots\cup\Lie(G_m)\bigr)\subseteq\Der_\K(A).
$$
Then the following conditions are equivalent:
\begin{enumerate}
\item $G$ is an algebraic subgroup of $\Aut(X)$;
\item $\dim_\K L<\infty$;
\item $L$ acts locally finitely on $A$;
\item $L\subseteq\LFD(A)$.
\end{enumerate}
When these conditions hold, $\Lie(G)=L$.
\end{introcor}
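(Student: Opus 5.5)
The plan is to reduce everything to Corollary~\ref{cor:equiv} together with the Kraft--Zaidenberg criterion \cite[Theorem~A]{KraftZaidenberg2024}.

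\emph{Step 1: the generators.} Each $G_i$ is a connected algebraic subgroup of $\Aut(X)$, so $A=\K[X]$ is a locally finite $G_i$-module. Its tangent algebra $\Lie(G_i)\subseteq\Der_\K(A)$ is finite-dimensional and consists of locally finite derivations: a $G_i$-stable finite-dimensional subspace containing $a$ is also stable under every element of $\Lie(G_i)$. Choose a basis of each $\Lie(G_i)$. Then $L$ is generated as a Lie algebra by finitely many locally finite derivations, which is exactly the hypothesis of Corollary~\ref{cor:equiv}. That corollary immediately gives the equivalence $(2)\Leftrightarrow(3)\Leftrightarrow(4)$ of the present statement. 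The one thing to check is that $\Lie_\K$ of the union coincides with the Lie algebra generated by the chosen bases, which is clear.

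\emph{Step 2: linking to algebraicity.} Apply \cite[Theorem~A]{KraftZaidenberg2024}: a subgroup generated by connected algebraic subgroups is algebraic if and only if the Lie algebra generated by their tangent algebras is finite-dimensional. This gives $(1)\Leftrightarrow(2)$. The same result, or its proof, also states that in this case $G$ is a connected algebraic group with $\Lie(G)=L$.

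If one wants to avoid relying on that last assertion, argue as follows. We have $\Lie(G_i)\subseteq\Lie(G)$, and $\Lie(G)$ is a Lie subalgebra of $\Der_\K(A)$, so $L\subseteq\Lie(G)$. For the reverse inclusion, $G$ is generated by the $G_i$ and is connected. By the standard fact that the Lie algebra of a group generated by connected subgroups is spanned by the iterated $\operatorname{Ad}$-conjugates of their Lie algebras, it suffices to check that $L$ is stable under $\operatorname{Ad}(G_i)$. This holds because $\operatorname{Ad}(\exp)$ on the finite-dimensional $L$ is generated by $\ad$ of $\Lie(G_i)$, or by a Zariski-density argument on the finite-dimensional algebraic representation.

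\emph{Main obstacle.} The only nontrivial input is Theorem~\ref{thm:main}, via Corollary~\ref{cor:equiv}, which gives $(4)\Rightarrow(2)$. Everything else is bookkeeping. The step needing the most care is the identification $\Lie(G)=L$, in particular making sure that the ind-group notion of tangent algebra used by Kraft--Zaidenberg matches $\Lie(G)$ for the algebraic group $G$. For this I would cite the relevant statement in \cite{KraftZaidenberg2024} directly.
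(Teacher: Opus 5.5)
Your proposal is correct and matches the paper's proof: choosing bases of the $\Lie(G_i)$ gives finitely many locally finite generators of $L$, so Corollary~\ref{cor:equiv} yields (2)$\Leftrightarrow$(3)$\Leftrightarrow$(4), and Kraft--Zaidenberg's Theorem~A supplies (1)$\Leftrightarrow$(2) together with $\Lie(G)=L$. Your optional alternative derivation of $\Lie(G)=L$ is not needed, since the paper likewise just cites Kraft--Zaidenberg; if you keep it, use the Zariski-density (Lie-stability) version, because an exponential-based argument only makes sense when the $G_i$ are unipotent.
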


\noindent The equivalence of (1) and (2), and the equality $\Lie(G)=L$, are due to Kraft and Zaidenberg~\cite[Theorem~A]{KraftZaidenberg2024}; the equivalence of (2) with (3) and (4) is Corollary~\ref{cor:equiv}.

Popov asked when the minimal closed subgroup of $\Aut(\mathbb A^n)$ containing two prescribed one-parameter unipotent subgroups is finite-dimensional~\cite[Problem~3.1]{PopovProblems2005}. Corollary~\ref{cor:popov} provides an answer to this question: if $D,E\in\LND(\K[x_1,\ldots,x_n])$ are the infinitesimal generators of these subgroups, we obtain the intrinsic criterion
$$
\dim G<\infty\quad\Longleftrightarrow\quad\Lie_\K(D,E)\subseteq\LFD(\K[x_1,\ldots,x_n]).
$$

The local finiteness of the adjoint action established in Section~\ref{sec:adjoint} also yields the following result, which settles Question~1 of Arzhantsev and Zaidenberg~\cite{ArzhantsevZaidenberg2025}.

\begin{introtheorem}[Solvable Lie algebras generated by locally finite subalgebras]\label{thm:mainsolvable} Let $\K$ be a field of characteristic zero, let $A$ be a finitely generated commutative $\K$-algebra, and let $\fh$ be a solvable Lie algebra generated by finitely many locally finite Lie subalgebras $\fh_i\subseteq\Der_\K(A)$. Then $\fh$ is finite-dimensional and acts locally finitely on $A$.
\end{introtheorem}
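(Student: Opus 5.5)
The plan is to reduce to finite-dimensional generating data, then use solvability to prove $\dim_\K\fh<\infty$ by induction on derived length. Local finiteness of the action will then follow from Proposition~\ref{prop:fd-direction}.

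\emph{Step 1: each $\fh_i$ is finite-dimensional.} Choose a finite-dimensional subspace $U\subseteq A$ containing a set of algebra generators of $A$ and stable under $\fh_i$; this is possible because $\fh_i$ acts locally finitely. A derivation is determined by its restriction to $U$, so $D\mapsto D|_U$ embeds $\fh_i$ into $\End(U)$. Hence $\dim\fh_i<\infty$, and $M:=\fh_1+\cdots+\fh_m$ is a finite-dimensional subspace generating $\fh$.

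\emph{Step 2: ad-local finiteness of the generators.} By the same restriction map, $\Der_\K(A)$ embeds $\fh_i$-equivariantly into $\Hom(U,A)$, where $\fh_i$ acts through its actions on $U$ and on $A$. Since $U$ is finite-dimensional and $A$ is a locally finite $\fh_i$-module, $\Hom(U,A)\cong U^*\otimes A$ is a locally finite $\fh_i$-module. Hence $\ad_{\fh_i}$ acts locally finitely on $\Der_\K(A)$. This is the subalgebra version of Lemma~\ref{lem:adjoint-lf}, and it also holds on every subquotient.

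\emph{Step 3: induction on derived length.} The quotient $\fh/\fh'$ is abelian and spanned by the image of $M$, so it is finite-dimensional. Next consider the abelian quotient $\fh'/\fh''$, a module over the abelian Lie algebra $\fh/\fh'$.
\begin{itemize}
\item $\fh/\fh'$ acts through the commuting operators induced by $\ad_{\fh_i}$, which are locally finite by Step~2.
\item As a module, $\fh'/\fh''$ is generated by the finitely many brackets $[\fh_i,\fh_j]$.
\item A module generated by a finite-dimensional space under finitely many commuting locally finite operators is finite-dimensional.
\end{itemize}
So $\fh/\fh''$ is finite-dimensional. Lift a basis of $\fh'/\fh''$ to a finite-dimensional $W\subseteq\fh'$. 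Then $\fh'=W+[\fh',\fh']$, and because $\fh'$ is solvable, the subalgebra generated by $W$ equals $\fh'$: iterate $S+\fh^{(k)}=\fh'$ until $\fh^{(k)}=0$. We then want to apply induction to $\fh'$, whose derived length is smaller.

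\emph{Main obstacle: keeping the inductive hypothesis.} The elements of $W$ are brackets of elements of different $\fh_i$, and these need not lie in a locally finite subalgebra. So I would strengthen the induction statement to: \emph{a solvable Lie subalgebra of $\Der_\K(A)$ generated by a finite-dimensional $M$ such that $\fh$ acts locally finitely under $\ad$ on the $\fh$-submodule of $\Der_\K(A)$ generated by $M$ is finite-dimensional.} I would then try to propagate this ad-local finiteness from the generators to all of $\fh$. First, over $\overline{\K}$, apply Lie's theorem to the finite-dimensional $\fh$-stable subspaces built in Step~3 level by level. Second, use that brackets of ad-locally-finite operators preserving a common flag stay locally finite there. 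If this propagation fails, the fallback is to show directly that $\fh\subseteq\LFD(A)$, triangularizing on finite-dimensional stable pieces of $A$, and then invoke Theorem~\ref{thm:main}.

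\emph{Step 4: local finiteness of the action.} Once $\dim\fh<\infty$, $\fh$ is a finite-dimensional Lie algebra generated by locally finite derivations. Proposition~\ref{prop:fd-direction}, i.e.\ Corollary~\ref{cor:equiv} $(1)\Rightarrow(2)$, then gives that $\fh$ acts locally finitely on $A$.
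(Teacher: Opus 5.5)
\textbf{There is a genuine gap after your computation that $\fh/\fh''$ is finite-dimensional.} Steps 1, 2 and 4 are sound; Step 2 even gives ad-local finiteness of each whole $\fh_i$, which is slightly more than Lemma~\ref{lem:adjoint-lf}.

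The failure is your claim that $W$ generates $\fh'$ ``because $\fh'$ is solvable''. This is false. The Frattini-type iteration works along the lower central series, not along the derived series. Expanding $[W+\fh'',W+\fh'']$ produces the cross term $[W,\fh'']$, which need not lie in $\Lie_\K(W)+\fh'''$. For example, in the two-dimensional algebra $\fg$ with $[x,y]=y$, the subspace $W=\K x$ satisfies $W+[\fg,\fg]=\fg$, yet it generates only $\K x$.

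Nilpotence of $\fh'$, which would rescue the argument, is not available, since $\fh$ is not yet known to be finite-dimensional. More generally, a finite-codimensional ideal of a finitely generated Lie algebra need not be finitely generated as a Lie algebra. The ideal generated by $y$ in the free Lie algebra on $x,y$ has codimension one but is free on the elements $\ad_x^k y$, $k\ge0$.

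You do correctly identify the second obstacle: the new generators have no known ad-local finiteness. The remedies you propose are not arguments, however. On a finite-dimensional space every operator is locally finite, so triangularizing there says nothing about the adjoint action on $\fh''$ and beyond, which is where local finiteness is in question. The fallback of proving $\fh\subseteq\LFD(A)$ directly is the entire difficulty.

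\textbf{The missing idea is never to replace $\fh$ or its original generators.} Let $s_1,\dots,s_m$ be a basis of $\sum_i\fh_i$, and prove by induction on $r$ that $\fh/\fh^{(r)}$ is finite-dimensional.

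Then $\fh^{(r)}$ is a finite-codimensional ideal of the finitely generated algebra $\fh$. Lemma~\ref{lem:finite-codim-ideal-fg} makes it finitely generated \emph{as an ideal}, so $\fh^{(r)}/\fh^{(r+1)}$ is a finitely generated module over the finite-dimensional Lie algebra $\fh/\fh^{(r)}$.

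That quotient algebra is generated by the images of the $s_i$, which act locally finitely on the subquotient by your Step 2. Proposition~\ref{prop:fd-direction} (the Fernando--Kac input) upgrades this to local finiteness of the whole $\fh/\fh^{(r)}$-action. A finitely generated, locally finite module is finite-dimensional, which completes the induction step.

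This is exactly the propagation of local finiteness from generators that you were looking for, applied to the finite-dimensional quotient rather than to $\fh$ itself. For $r=1$ your commuting-operator argument suffices because $\fh/\fh'$ is abelian. For $r\ge2$ the quotient is non-abelian, and something like Proposition~\ref{prop:fd-direction} is required. This is Proposition~\ref{prop:virtually-solvable} applied with $J=\fh$, which is how the paper proves the theorem.
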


\subsection*{Outline of the proof}

The tool that reduces Theorem~\ref{thm:main} to Theorem~\ref{thm:lnd-solvable} is the introduction of a {\em cofinite separating ideal}, namely an ideal $I\subseteq A$ such that $A/I$ is finite-dimensional and $\bigcap_{q\geq1}I^q=0$. Such ideals always exist under our assumptions: select a closed point in the closure of every associated point of $\Spec A$, including the embedded associated points (see Lemma~\ref{lem:existencecosepideal}). Put
$$
\mathcal D_I^2:=\{D\in\Der_\K(A):D(A)\subseteq I^2\}.
$$
If $D\in\mathcal D_I^2$, then $D(I^q)\subseteq I^{q+1}$ and $D^n(a)\in I^{n+1}$ for every $a\in A$ and $q,n\geq1$. When $D$ is locally finite, the finite-dimensional orbit of $a$ intersects the separated $I$-adic filtration only for finitely many layers, so $D$ is locally nilpotent. Thus
$$
\LFD(A)\cap\mathcal D_I^2=\LND(A)\cap\mathcal D_I^2.
$$
The subalgebra $H_I:=L\cap\mathcal D_I^2$ has finite codimension in $L$, and the proof of Theorem~\ref{thm:main} is summarized by
\begin{equation}\label{eq:roadmap}
\begin{gathered}
H_I\subseteq\LFD(A)\cap\mathcal D_I^2\ \xRightarrow{\ \text{Lemma~\ref{lem:second-order-lnd}}\ }\ H_I\subseteq\LND(A)\ \xRightarrow{\ \text{Theorem~\ref{thm:lnd-solvable}}\ }\ H_I\ \text{solvable}\\
\xRightarrow{\ \text{Lemma~\ref{lem:Petravchuk}}\ }\ \text{a solvable ideal of finite codimension in }L\ \xRightarrow{\ \text{Proposition~\ref{prop:virtually-solvable}}\ }\ \dim_\K L<\infty.
\end{gathered}
\end{equation}
The last implication uses local finiteness of the adjoint action of the generators.

The argument in fact precisely locates the obstruction: finite-dimensionality of $L$ is detected on $H_I$, and an infinite-dimensional $L$ contains a derivation that is not locally finite and has trivial first jet along the fixed finite subscheme $\Spec(A/I)$. 

Theorem~\ref{thm:intro-localized} in Section~\ref{sec:mainproof} makes this precise: with $H_I:=L\cap\mathcal D_I^2$ as above, finite-dimensionality of $L$  is equivalent to nilpotence,  solvability,  elementwise local finiteness, and elementwise local nilpotence of $H_I$.

Two consequences of the fact that $L$ is a Lie algebra of {\em derivations} will appear repeatedly in what follows. The first is that a derivation is determined by its values on a set of algebra generators, so that restriction to a finite-dimensional subspace containing such a set is faithful (Lemma~\ref{lem:faithful-restriction}). The second is that, through the Leibniz rule, a statement of the form $D(A)\subseteq I$ for an ideal $I\subseteq A$ becomes a statement about the {\em powers} of $I$. If $D(A)\subseteq I^2$, then $D(I^q)\subseteq I^{q+1}$ for every $q\geq1$, which is what underlies $\mathcal D_I^2$ above; and if $D$ maps $A$ into the nilradical $\mathcal N$, then $D$ acts $A/\mathcal N$-linearly on each quotient $\mathcal N^i/\mathcal N^{i+1}$, which is what makes Lemma~\ref{lem:module-engel} available in the proof of Theorem~\ref{thm:lnd-solvable}. Neither consequence is geometric. A separate geometric ingredient in the proof of Theorem~\ref{thm:main} is the existence of a cofinite separating ideal (Lemma~\ref{lem:existencecosepideal}), a statement about $A$ alone in which derivations play no part.

\subsection*{Related work}

The study of Lie algebras of locally nilpotent derivations is closely related to the classical problem of triangularization. Rentschler proved that every locally nilpotent derivation of $\K[x,y]$ is triangularizable~\cite{Rentschler1968}; Bass showed that triangularizability already fails on $\mathbb A^3$~\cite{Bass1984}; subsequent work of Freudenburg and Daigle developed triangularizability criteria and relative forms of Rentschler's theorem~\cite{Freudenburg1995,Daigle1996,DaigleFreudenburg1998}. For Lie algebras of locally nilpotent derivations, Bavula studied the structure of triangular Lie algebras of polynomial derivations~\cite{Bavula2013}. Petravchuk and Sysak proved nilpotence in the finite-dimensional domain case and simultaneous triangularization on the affine plane, and observed that the structure of such Lie algebras remains open beyond the finite-dimensional and two-variable settings~\cite{PetravchukSysak2017}. Skutin studied maximal Lie subalgebras among the locally nilpotent derivations of a polynomial ring~\cite{Skutin}. Makedonskyi and Petravchuk studied nilpotent and solvable Lie algebras of derivations and obtained bounds on their derived length in terms of rank~\cite{MakedonskyiPetravchuk2014}. More recently, Bezushchak, Petravchuk and Zelmanov established local nilpotence of arbitrary Lie subalgebras contained in $\LND(A)$~\cite[Theorem~1.4]{BezushchakPetravchukZelmanov2024}.

Local finiteness has been studied both for individual derivations and in connection with Lie algebras and algebraic group actions. Bass and Meisters, and later van den Essen, studied locally finite derivations and their applications to polynomial flows~\cite{BassMeisters1985,VanDenEssen1992,VanDenEssen1994}. Cohen and Draisma studied the integration problem for Lie algebras of vector fields~\cite{CohenDraisma2003}, while Kraft and Zaidenberg study algebraically generated subgroups of automorphism groups and their Lie algebras~\cite{KraftZaidenberg2024}. Chitayat, Daigle and Regeta develop a systematic study of locally finite sets of derivations; in the solvable quasi-affine setting they obtain, among other structural results, a countable increasing filtration by locally finite Lie subalgebras~\cite{ChitayatDaigleRegeta2026}. Arzhantsev and Zaidenberg study Borel subalgebras of Lie algebras of vector fields and formulate finiteness questions for solvable Lie algebras generated by locally finite subalgebras~\cite{ArzhantsevZaidenberg2025}; Zaidenberg proves the affine-plane case~\cite{Zaidenberg2026}.

\subsection*{A control-theoretic motivation} The rigidity theorem also settles a problem from control theory, which is in fact how we first arrived at it. We briefly describe the control-theoretic content here.  Linearization is a  recurring theme in dynamics: near a hyperbolic equilibrium, the Hartman--Grobman theorem~\cite{Grobman1959,Hartman1960} straightens a flow into its linear part,  locally and up to homeomorphism. In  applications, one often simply uses a  first-order linear approximation, again valid only locally. Super-linearization~\cite{BelabbasChen2023} trades dimension for exactness: by embedding the state space into a higher-dimensional one, some nonlinear systems become globally and exactly linear. The linearizability of flows by embeddings has recently been studied from a geometric standpoint by Kvalheim and Arathoon~\cite{KvalheimArathoon2026}. 

Consider now a control-affine polynomial system
\begin{equation}\label{eq:mainsys}
\frac{d}{dt}x(t)=f_0(x(t))+u_1(t)f_1(x(t))+\cdots+u_r(t)f_r(x(t)),\qquad x(t)\in\K^n,\quad\K\in\{\mathbb R,\mathbb C\}.
\end{equation}
Each vector field $f_i$ induces a derivation $D_i$ of the polynomial functions $\K[x_1,\ldots,x_n]$ (called {\em observables} in that literature). A \emph{super-linearization}, which can also be seen as an exact finite-dimensional bilinear realization or Koopman lift~\cite{Koopman1931,Mezic2005,BruntonKoopman2022}, is a finite-dimensional subspace $U\subseteq\K[x_1,\ldots,x_n]$ containing $1$ and the coordinate functions and invariant under all the $D_i$. In a basis of $U$, system~\eqref{eq:mainsys} lifts exactly to
$$
\dot z=A_0z+u_1A_1z+\cdots+u_rA_rz,
$$
with constant matrices $A_i$, and the original state is recovered by projection. Such lifts are practically useful, since they make it possible to bring the machinery of linear and bilinear control to bear on nonlinear systems. For one vector field such a realization is equivalent to local finiteness of its derivation. The family~\eqref{eq:mainsys} is in effect a parametric family of vector fields. For such a family, a \emph{simultaneous} linearization in the sense of Hartman--Grobman, that is, a single change of variables straightening all of $f_0,\ldots,f_r$ at once, cannot in general exist. For simultaneous super-linearization, by contrast, the rigidity theorem provides a complete answer: Corollary~\ref{cor:superlinearization} shows that system~\eqref{eq:mainsys} admits an exact finite-dimensional bilinear realization by polynomial observables containing the state coordinates if and only if $\Lie_\K(D_0,\ldots,D_r)\subseteq\LFD(\K[x_1,\ldots,x_n])$.

\subsection*{Organization of the paper}
Section~\ref{sec:forward} establishes the forward direction of Corollary~\ref{cor:equiv}, extending Kac's lemma to an arbitrary base field, and records the faithful restriction lemma used repeatedly in the sequel. Section~\ref{sec:adjoint} establishes local finiteness of the adjoint action, a finite-dimensionality criterion for Lie algebras with a solvable ideal of finite codimension (Proposition~\ref{prop:virtually-solvable}), and Theorem~\ref{thm:mainsolvable}. Section~\ref{sec:vanishing-order} constructs cofinite separating ideals and studies the filtration $\mathcal D_I^\bullet$. Section~\ref{sec:mainproof} then proves the rigidity theorems, using Theorem~\ref{thm:lnd-solvable} as an input. Section~\ref{sec:LNDAA} proves that theorem, first for reduced algebras and then for nilpotent thickenings. Section~\ref{sec:consequences} develops the automorphism-group and control-theoretic applications.

\subsection*{Conventions}

Throughout, $\K$ denotes a field of characteristic zero and $V$ is a $\K$-vector space. All algebras are commutative, associative, and unital; an {\em affine algebra} is a finitely generated commutative $\K$-algebra. All Lie algebras and their representations are over $\K$. For an algebra $A$, we write $\Der_\K(A)$ for the Lie algebra of $\K$-derivations, and $\mathcal N:=\sqrt{0}$ for the nilradical. The notation $\Lie_\K(S)$ denotes the Lie subalgebra generated by $S$, and $\ad_x(y):=[x,y]$. We take the derived length of the zero Lie algebra to be zero and that of a nonzero abelian Lie algebra to be one.

\section{From finite-dimensionality to local finiteness of the action}\label{sec:forward}

We recall the Fernando--Kac theorem, which states that the elements of a finite-dimensional Lie algebra $\fa$ that act locally finitely on a module $M$ form a {\em subalgebra} of $\fa$:
\begin{proposition}[Fernando--Kac {\cite[\S1.2, Lemma~(a), pp.~170--171]{Kac1985}; \cite[Corollary~2.7, p.~762]{Fernando1990}}]\label{prop:FK}
Let $\fa$ be a finite-dimensional Lie algebra over an algebraically closed field $\overline\K$ of characteristic zero, let $M$ be an $\fa$-module that is finitely generated as a $U(\fa)$-module and let $\rho:\fa\to\End_{\overline\K}(M)$ be the module action. Then the set
$$
  \fh:=\{x\in\fa:\rho(x)\text{ is locally finite on }M\}
$$
is a Lie subalgebra of $\fa$.
\end{proposition}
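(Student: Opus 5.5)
The plan is to show that $\fh$ is closed under brackets, since closure under scalars is clear and closure under addition will come out of the same argument. I reduce to a statement about finite-dimensional subspaces generating $M$. Fix $x,y\in\fh$ and let $m_1,\ldots,m_k$ generate $M$ over $U(\fa)$. The goal is to show that $\rho(x)$, $\rho(y)$, $\rho(x+y)$ and $\rho([x,y])$ are locally finite.

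First I use $\ad$-local finiteness. Since $\fa$ is finite-dimensional, $U(\fa)$ carries the standard filtration $U_n(\fa)$ by finite-dimensional $\ad(\fa)$-stable subspaces. For $z\in\fa$ and $u\in U(\fa)$,
$$
\rho(z)^N u\,m=\sum_{j}\binom Nj(\ad_z^{j}u)\,\rho(z)^{N-j}m .
$$
Hence, if $W_z$ is a finite-dimensional $\rho(z)$-stable subspace containing $m_1,\ldots,m_k$, the subspace $U_n(\fa)W_z$ is finite-dimensional and $\rho(z)$-stable. This holds because $\rho(z)uw=[z,u]w+u\,zw$, and $\ad_z$ preserves $U_n(\fa)$. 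Every element of $M$ lies in some $U_n(\fa)\{m_i\}$. So $z$ acts locally finitely on $M$ if and only if $M$ contains a finite-dimensional $\rho(z)$-stable subspace containing a chosen finite generating set. Passing to the algebraic closure is harmless here, since it is assumed.

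Next I take $\fs:=\Lie(x,y)\subseteq\fa$, a finite-dimensional subalgebra. Let $\fm\subseteq\fs$ be the subspace of elements acting locally finitely. I would like a single finite-dimensional subspace $W\subseteq M$ containing the generators and stable under both $\rho(x)$ and $\rho(y)$. By the first step, such a $W$ gives local finiteness of $\Lie(x,y)$, hence of $[x,y]$ and $x+y$.

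To construct $W$ I use the $\rho(x)$-stable subspace $W_x$ containing the $m_i$ and consider $\rho(y)$ acting on $U(\fa)W_x$. The natural route is the Fernando/Kac argument. Decompose $M$ under the semisimple part of $\rho(x)$ into generalized eigenspaces $M^\lambda$; these exist because $\rho(x)$ is locally finite over an algebraically closed field. Then $\ad_x$-generalized eigenvectors $a\in\fa^\mu$ map $M^\lambda$ into $M^{\lambda+\mu}$. Write $y=\sum_\mu y_\mu$. One then needs to show that the components $y_\mu$ together generate only finitely many weights acting on the generators, using local finiteness of $y$.

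The main obstacle is exactly this simultaneous-stability step: producing a common finite-dimensional subspace for two noncommuting locally finite operators. My first attempt is to induct on $\dim\fs$ using a Levi/solvable decomposition. In the solvable case, I would use Lie's theorem and the weight argument above to confine weights to a finite set plus nilpotent directions. In the semisimple case, I would cite integrability of $\mathfrak{sl}_2$-triples generated by locally finite elements. If that becomes too intricate, I would fall back on quoting Fernando's Corollary~2.7 for the precise closure statement.
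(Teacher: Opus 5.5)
The paper does not prove this proposition. It quotes it from Kac's Lemma~1.2(a) and Fernando's Corollary~2.7 and uses it as a black box in Proposition~\ref{prop:fd-direction}. Your fallback of citing Fernando therefore matches the paper. As a self-contained argument, however, your proposal has a genuine gap.

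Your first step is correct. Since $\ad_z$ preserves each $U_n(\fa)$, the space $U_n(\fa)W_z$ is finite-dimensional and $\rho(z)$-stable. So $z\in\fh$ as soon as some finite-dimensional $\rho(z)$-stable subspace contains the generators.

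The next step is only a reformulation. You ask for one finite-dimensional $W$ containing $m_1,\ldots,m_k$ and stable under both $\rho(x)$ and $\rho(y)$. This is equivalent to the proposition: if $\fh$ is a subalgebra, the ordered-monomial argument shows that $\sum_i U(\Lie(x,y))m_i$ is such a $W$. All the substance is still missing, and none of your sketched routes supplies it:
\begin{enumerate}
\item In the weight argument, local finiteness of $\rho(y)$ concerns only the sum $\sum_\mu\rho(y_\mu)$. It gives no control over the components $y_\mu$, and that control is the whole content of the theorem.
\item Lie's theorem needs a finite-dimensional invariant subspace, which is exactly the $W$ you are trying to build.
\item A Levi decomposition of $\Lie(x,y)$ does not place $x$ or $y$ in the radical or in a Levi factor, so it does not split the problem.
\item Integrability of $\mathfrak{sl}_2$-triples is a special case of the claim, not a tool for an arbitrary semisimple $\Lie(x,y)$.
\end{enumerate}

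The missing idea is conjugation invariance followed by linearization.

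First, it suffices to show that $\Span\fh$ is closed under brackets. A basis of $\Span\fh$ can be chosen inside $\fh$. The ordered-monomial argument from the proof of Proposition~\ref{prop:fd-direction} then shows that $\Span\fh$ acts locally finitely, so $\Span\fh=\fh$.

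Second, fix $x,y\in\fh$. If $T$ is invertible on $M$ and $T\rho(y)T^{-1}=\rho(y')$, then $y'\in\fh$, because conjugation preserves local finiteness.
\begin{itemize}
\item Over $\C$, take $T=e^{t\rho(x)}$. This gives $e^{t\ad_x}y\in\fh$ for all $t$, and differentiating at $t=0$ gives $[x,y]\in\Span\fh$. This is the argument the paper attributes to Kac.
\item Over an arbitrary algebraically closed field, write $M=\bigoplus_\lambda M^\lambda$ for the generalized eigenspaces of $\rho(x)$. Take $T=e^{tN}$, where $N$ is the locally nilpotent part of $\rho(x)$; it acts on $\rho(\fa)$ by $(\ad_x)_n$.
\item Also take $T_\chi$ acting by $\chi(\lambda)$ on $M^\lambda$, for additive characters $\chi\colon\overline\K\to\overline\K^\times$. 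Then $T_\chi\rho(y)T_\chi^{-1}=\rho\bigl(\sum_\mu\chi(\mu)y_\mu\bigr)$.
\end{itemize}
Polynomial interpolation in $t$ puts $(\ad_x)_ny$ in $\Span\fh$. Independence of characters puts every $y_\mu$ in $\Span\fh$. Hence $[x,y]=\sum_\mu\mu y_\mu+(\ad_x)_ny\in\Span\fh$. This is precisely the control on the components $y_\mu$ that your weight sketch lacks.
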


Over $\mathbb C$, the next proposition is Kac's Lemma~1.2(b)~\cite[\S1.2, Lemma~(b), pp.~170--171]{Kac1985}. Kac's proof uses in an essential way the fact that $\C$ is the base field: conjugation by $\exp(t\ad_x)$, $t \in \C$, preserves local finiteness, and differentiation at $t=0$ gives closure under brackets. Because Kac's argument is specific to $\C$, we give a different proof, valid over any field of characteristic zero.

 \begin{proposition}[Finite-dimensional Lie algebras generated by locally finite operators]\label{prop:fd-direction}
Let $\fa$ be a finite-dimensional Lie algebra over $\K$, and let $\rho:\fa\to\End_\K(V)$ be a representation.  If $\fa$ is generated as a Lie algebra by elements $x_1,\ldots,x_r$ such that each $\rho(x_i)$ is locally finite on $V$, then $\fa$ acts locally finitely on $V$.
\end{proposition}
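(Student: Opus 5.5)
The plan is to reduce to an algebraically closed field, apply the Fernando--Kac theorem (Proposition~\ref{prop:FK}) on cyclic submodules so that \emph{every} element of $\fa$ becomes locally finite, and then assemble a common finite-dimensional invariant subspace from a PBW basis.

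\emph{Step 1 (scalar extension).} Put $\overline V:=V\otimes_\K\overline\K$ and $\overline\fa:=\fa\otimes_\K\overline\K$, with the induced representation $\overline\rho$. I will check that each $\overline\rho(x_i)$ is locally finite on $\overline V$. Indeed, for $\sum_j v_j\otimes c_j$, the $\overline\rho(x_i)$-orbit lies in $\sum_j(U_j\otimes\overline\K)$, where $U_j$ is the finite-dimensional $\rho(x_i)$-orbit span of $v_j$. The elements $x_i\otimes 1$ still generate $\overline\fa$ as a Lie algebra over $\overline\K$.

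Conversely, suppose $\overline\fa$ acts locally finitely on $\overline V$. Fix $v\in V$. Then $U(\overline\fa)(v\otimes1)$ is finite-dimensional over $\overline\K$, and it contains $(U(\fa)v)\otimes1$. Since $V\to\overline V$, $w\mapsto w\otimes1$, is injective and carries $\K$-independent vectors to $\overline\K$-independent ones, $U(\fa)v$ is finite-dimensional. This subspace contains $v$ and is $\fa$-stable. Hence it suffices to treat $\K$ algebraically closed.

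\emph{Step 2 (every element is locally finite on cyclic submodules).} Fix $v\in V$ and let $M:=U(\fa)v$. This is a finitely generated (indeed cyclic) $U(\fa)$-module. Each $\rho(x_i)$ is locally finite on $V$, hence on the invariant subspace $M$. By Proposition~\ref{prop:FK}, the set of $x\in\fa$ acting locally finitely on $M$ is a Lie subalgebra. It contains the generators $x_1,\ldots,x_r$, so it is all of $\fa$. In particular, for a basis $y_1,\ldots,y_n$ of $\fa$, each $\rho(y_k)$ is locally finite on $M$.

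\emph{Step 3 (joint finiteness via PBW).} By the PBW theorem, $M$ is spanned by the vectors $y_1^{k_1}\cdots y_n^{k_n}v$. Define inductively
\[
W_{n+1}:=\K v, \qquad W_k:=\Span\{\,y_k^{j}w : j\ge0,\ w\in W_{k+1}\,\}.
\]
If $W_{k+1}$ is finite-dimensional, then $W_k$ is the sum of the $\rho(y_k)$-orbit spans of finitely many basis vectors of $W_{k+1}$. Each of these is finite-dimensional by Step~2, so $W_k$ is finite-dimensional. Thus $W_1$ is finite-dimensional. But $W_1$ contains all PBW monomials applied to $v$, so $W_1=M$. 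Hence $M$ is a finite-dimensional $\fa$-stable subspace containing $v$, which proves local finiteness of the action.

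The only substantive input is Step~2, i.e.\ the applicability of Fernando--Kac. This is why we pass to cyclic submodules: they supply the finite-generation hypothesis. We pass to $\overline\K$ to meet the algebraic-closedness hypothesis. The remaining steps are routine bookkeeping.
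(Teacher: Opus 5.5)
Your proposal is correct and follows the paper's proof essentially step for step: you pass to the cyclic submodule $M=U(\fa)v$, use Fernando--Kac to see that the elements acting locally finitely on $M$ form a subalgebra containing the generators (hence all of $\fa$), and then bound $M$ by nested orbit spans of PBW monomials. The only difference is bookkeeping. You base-change the whole statement to $\overline\K$ at the outset and descend at the end, whereas the paper stays over $\K$ and uses scalar extension only to show that $\fh_M$ is a Lie subalgebra; both routes are valid.
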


\begin{proof}
Fix $v\in V$, and let $M:=\Span_\K\{\rho(x_{i_1})\cdots\rho(x_{i_s})v:s\ge0,\ 1\le i_j\le r\}\subseteq V$ be a  cyclic $\fa$-submodule of $V$. Since each $\rho(x_i)$ is locally finite on $V$, it is in particular locally finite on $M$.

Now define $\fh_M:=\{x\in\fa:\rho(x)\text{ is locally finite on }M\}$. We claim that $\fh_M$ is a Lie subalgebra of $\fa$. When $\K$ is algebraically closed, this is Proposition~\ref{prop:FK}; the general case follows by base change. Let $\Omega/\K$ be a field extension and $T\in\End_\K(M)$. If $T$ is locally finite and $w=\sum_{j=1}^q\omega_j\otimes v_j\in\Omega\otimes_\K M$, choose finite-dimensional $T$-stable subspaces $U_j\subseteq M$ containing the $v_j$; then the orbit of $w$ lies in $\Omega\otimes_\K(U_1+\cdots+U_q)$. The converse follows by applying local finiteness to $1\otimes v$ and using preservation of linear independence under scalar extension. Taking $\Omega=\overline\K$, an algebraic closure of $\K$, and writing $\overline\fa:=\overline\K\otimes_\K\fa$, the module $\overline\K\otimes_\K M$ is finitely generated as a $U(\overline\fa)$-module, so Proposition~\ref{prop:FK} shows that the locally finite elements of $\overline\fa$ form a Lie subalgebra; their preimage in $\fa$ is precisely $\fh_M$, which is therefore a Lie subalgebra of $\fa$, proving the claim. Since $x_1,\ldots,x_r\in\fh_M$ and these elements generate $\fa$ as a Lie algebra, it follows that $\fh_M=\fa$. Thus every element of $\fa$ acts locally finitely on $M$.

 Now choose a basis $E_1,\ldots,E_m$ of $\fa$. Since $M$ is a cyclic $\fa$-module, the Poincar\'e--Birkhoff--Witt theorem implies that $M$ is spanned by the ordered monomials $\rho(E_1)^{k_1}\cdots\rho(E_m)^{k_m}v$, with $k_1,\ldots,k_m\ge0$. For a finite-dimensional subspace $W\subseteq M$ and $1\le i\le m$, the subspace $V_i(W):=\Span_\K\{\rho(E_i)^kw:k\ge0,\ w\in W\}$ is finite-dimensional, because $\rho(E_i)$ is locally finite on $M$. Set $W_m:=V_m(\K v)$ and $W_i:=V_i(W_{i+1})$ for $i=m-1,\ldots,1$. Then each $W_i$ is finite-dimensional, and $\rho(E_1)^{k_1}\cdots\rho(E_m)^{k_m}v\in W_1$ for all $k_1,\ldots,k_m\ge0$: indeed, $\rho(E_m)^{k_m}v\in W_m$, then $\rho(E_{m-1})^{k_{m-1}}\rho(E_m)^{k_m}v\in W_{m-1}$, and so on. Hence $M\subseteq W_1$ is finite-dimensional.

Since $v\in V$ was arbitrary, every vector of $V$ is contained in a finite-dimensional $\fa$-stable subspace. Hence $\fa$ acts locally finitely on $V$.
\end{proof}

In the following result, $A$ need not be finitely generated.
\begin{proposition}[Finite-dimensional Lie algebras generated by locally finite subalgebras]\label{prop:kz-q23}
Let $\K$ be a field of characteristic zero, let $A$ be a $\K$-algebra, and let $\{\fh_i\}_{i\in\Omega}$ be a family of locally finite Lie subalgebras of $\Der_\K(A)$. If $\fh:=\Lie_\K\bigl(\bigcup_{i\in\Omega}\fh_i\bigr)$ is finite-dimensional, then $\fh$ is locally finite on $A$.
\end{proposition}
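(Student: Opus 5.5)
The plan is to reduce to Proposition~\ref{prop:fd-direction} by choosing finitely many generators of $\fh$, each lying in some $\fh_i$. Since $\fh$ is finite-dimensional and spanned by iterated brackets of elements of $\bigcup_i\fh_i$, we can pick a basis of $\fh$ consisting of finitely many such brackets. Each of these brackets involves only finitely many elements of $\bigcup_i\fh_i$. Collecting all of them gives a finite set $x_1,\ldots,x_r$, with $x_j\in\fh_{i_j}$, that generates $\fh$ as a Lie algebra.

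Each $\fh_{i_j}$ is locally finite on $A$. Hence every element of $\fh_{i_j}$, and in particular $x_j$, is locally finite as an operator on $A$: a finite-dimensional $\fh_{i_j}$-stable subspace containing $a$ is in particular $x_j$-stable.

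We then apply Proposition~\ref{prop:fd-direction} with $\fa:=\fh$, $V:=A$, and $\rho$ the inclusion $\fh\hookrightarrow\Der_\K(A)\subseteq\End_\K(A)$. All of its hypotheses are satisfied: $\fa$ is finite-dimensional, it is generated by $x_1,\ldots,x_r$, and each $\rho(x_j)$ is locally finite. The proposition then yields that $\fh$ acts locally finitely on $A$. No finite generation of $A$ is used, consistent with the remark preceding the statement.

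The only step requiring any care is extracting a finite generating set from the possibly infinite family $\{\fh_i\}$ and the possibly infinite-dimensional subalgebras $\fh_i$. This is handled by the finite-dimensionality of $\fh$ as above, so I do not expect a genuine obstacle. In effect, all the substance is already contained in Proposition~\ref{prop:fd-direction}, that is, in the Fernando--Kac theorem and the PBW argument.
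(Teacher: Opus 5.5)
Your proposal is correct and follows the paper's proof exactly: extract a finite generating set $S_0\subseteq\bigcup_{i\in\Omega}\fh_i$ by expressing a basis of $\fh$ through iterated brackets and collecting the finitely many elements involved. Each element of $S_0$ is locally finite because it lies in a locally finite subalgebra, so Proposition~\ref{prop:fd-direction} applies directly.
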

\begin{proof}
Since $\fh$ is finite-dimensional, a finite subset $S_0\subseteq\bigcup_{i\in\Omega}\fh_i$ generates $\fh$: choose a basis $v_1,\ldots,v_r$ of $\fh$, write each $v_i$ as a linear combination of iterated brackets of elements of the $\fh_i$,  and collect the finitely many elements appearing in the brackets. Every element of $S_0$ is locally finite on $A$, so Proposition~\ref{prop:fd-direction} applies.
\end{proof}
\noindent  This proposition also provides affirmative answers to Questions~2 and~3 of Kraft and Zaidenberg~\cite{KraftZaidenberg2024}, with no finite-generation hypothesis on $A$.

\xc{Faithful restriction.}
We record here an elementary observation that will be used repeatedly. A derivation of $A$ is determined by its values on any set of algebra generators; when $A$ is finitely generated, those generators lie in a finite-dimensional subspace.

\begin{lemma}[Faithful restriction]\label{lem:faithful-restriction}
Let $A$ be a commutative $\K$-algebra and let $E\subseteq A$ be a subspace containing a set of algebra generators of $A$. Then:
\begin{enumerate}
\item the restriction map
$$
\Der_\K(A)\longrightarrow\Hom_\K(E,A),\quad \eta\longmapsto\eta|_E,
$$
is injective;
\item if $E$ and $W\subseteq A$ are finite-dimensional, then $\{\eta\in\Der_\K(A):\eta(E)\subseteq W\}$ is finite-dimensional, of dimension at most $(\dim_\K E)(\dim_\K W)$;
\item if $E$ is stable under a Lie subalgebra $H\subseteq\Der_\K(A)$, then restriction gives an injective Lie algebra homomorphism $H\longrightarrow\End_\K(E)$.  In particular, if $E$ is finite-dimensional, then $\dim_\K H\leq(\dim_\K E)^2$.
\end{enumerate}
\end{lemma}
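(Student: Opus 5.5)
The plan is to prove the three parts in order, each reducing to the previous one.

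For (1), let $S\subseteq E$ be a set of algebra generators. Suppose $\eta\in\Der_\K(A)$ vanishes on $E$, hence on $S$. The set $\ker\eta$ contains $\K\cdot 1$, since $\eta(1)=\eta(1\cdot1)=2\eta(1)$ forces $\eta(1)=0$. It is a subspace, and by the Leibniz rule it is closed under products: if $\eta(a)=\eta(b)=0$, then $\eta(ab)=a\eta(b)+b\eta(a)=0$. So $\ker\eta$ is a $\K$-subalgebra of $A$ containing $S$, and therefore equals $A$. Thus $\eta=0$, and since restriction is linear, it is injective.

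For (2), the set $\{\eta:\eta(E)\subseteq W\}$ is a linear subspace of $\Der_\K(A)$. By (1), restriction to $E$ maps it injectively into $\Hom_\K(E,W)$, which has dimension $(\dim_\K E)(\dim_\K W)$. This gives the bound.

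For (3), suppose $H$ stabilizes $E$. Then $\eta\mapsto\eta|_E$ lands in $\End_\K(E)$. It is a Lie homomorphism because for $x\in E$,
$$[\eta,\zeta]|_E(x)=\eta(\zeta x)-\zeta(\eta x)=[\eta|_E,\zeta|_E](x),$$
which uses $\zeta x,\eta x\in E$. Injectivity follows from (1). The dimension bound follows from (2) with $W=E$, or directly from $\dim\End_\K(E)=(\dim_\K E)^2$.

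None of these steps is a real obstacle. The only point to get right is the observation that the kernel of a derivation is a unital subalgebra, and this is exactly what makes (1) work.
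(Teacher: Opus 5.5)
Your proposal is correct and follows essentially the same route as the paper: vanishing on generators forces $\eta=0$ by the Leibniz rule, (2) follows by embedding into $\Hom_\K(E,W)$ via (1), and (3) follows from the bracket computation together with (1). Your observation that $\ker\eta$ is a unital subalgebra just spells out the paper's one-line appeal to the Leibniz rule.
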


\begin{proof}
(1) If $\eta|_E=0$, then $\eta$ vanishes on a set of algebra generators of $A$, hence on all of $A$ by the Leibniz rule. (2) By~(1), the space in question is carried injectively into $\Hom_\K(E,W)$. (3) Since $E$ is $H$-stable, $[D,D']|_E=[D|_E,D'|_E]$ for all $D,D'\in H$, so restriction is a homomorphism of Lie algebras; it is injective by~(1).
\end{proof}

We isolate the consequence that will be used most often.

\begin{corollary}\label{cor:lf-subalg-fd}
Let $A$ be a finitely generated commutative $\K$-algebra, and let $H\subseteq\Der_\K(A)$ be a locally finite subalgebra. Then $H$ is finite-dimensional over $\K$.
\end{corollary}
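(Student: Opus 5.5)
The argument is a direct application of faithful restriction, Lemma~\ref{lem:faithful-restriction}(3). Since $A$ is finitely generated, we fix algebra generators $a_1,\ldots,a_k$ of $A$. Because $H$ acts locally finitely on $A$, each $a_j$ lies in some finite-dimensional subspace $U_j\subseteq A$ with $D(U_j)\subseteq U_j$ for all $D\in H$.

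We then set $E:=U_1+\cdots+U_k$. As a finite sum of finite-dimensional $H$-stable subspaces, $E$ is finite-dimensional and $H$-stable, and it contains the generators $a_1,\ldots,a_k$.

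Lemma~\ref{lem:faithful-restriction}(3) now applies to $E$. It gives an injective Lie algebra homomorphism $H\to\End_\K(E)$, and hence $\dim_\K H\le(\dim_\K E)^2<\infty$.

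None of these steps is a real obstacle. The only point needing care is that $E$ must be stable under all of $H$ simultaneously, and not merely under each $D\in H$ separately. This is exactly what local finiteness of the action (as opposed to elementwise local finiteness) supplies, through the sets $U_j$ that depend only on $a_j$.
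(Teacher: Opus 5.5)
Your proposal is correct and follows essentially the same route as the paper. The paper also sums finite-dimensional $H$-stable subspaces containing a finite set of algebra generators, then applies Lemma~\ref{lem:faithful-restriction}(3) to embed $H$ into $\End_\K(E)$.
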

\begin{proof}
Choose algebra generators $a_1,\ldots,a_N$ of $A$. Since $H$ is locally finite on $A$, for each $i$ there is a finite-dimensional $H$-stable subspace $U_i\subseteq A$ with $a_i\in U_i$. Then $U:=U_1+\cdots+U_N$ is finite-dimensional, $H$-stable, and contains the algebra generators, so Lemma~\ref{lem:faithful-restriction}(3) applies.
\end{proof}

Lemma~\ref{lem:faithful-restriction} has no analogue for arbitrary linear operators, as in general,  nothing requires a Lie algebra of operators to be determined by its restriction to a single finite-dimensional subspace. In fact, Theorem~\ref{thm:main} admits no counterpart in that setting: the following example exhibits two locally finite operators on a $\K$-vector space whose generated Lie algebra acts locally finitely yet is infinite-dimensional. Moreover, the adjoint actions of the two generators are nilpotent. Thus the rigidity phenomenon is specific to {\em derivations} of finitely generated commutative algebras.

\begin{example}\label{ex:not-derivations}
Let $V:=\bigoplus_{n\ge1}V_n$ with $V_n:=\K^2$, and let $e,f,h$ be the standard basis of $\mathfrak{sl}_2(\K)$, so that
$$
[e,f]=h,\quad [h,e]=2e,\quad [h,f]=-2f.
$$
Define $X,Y\in\End_\K(V)$ blockwise by
$$
X|_{V_n}:=e,\quad Y|_{V_n}:=nf,
$$
and set $L:=\Lie_\K(X,Y)$.

Every element of $L$ preserves each block $V_n$ and restricts on it to an element of $\mathfrak{sl}_2(\K)$. Since $(\ad_e)^3=(\ad_f)^3=0$ on $\mathfrak{sl}_2(\K)$, it follows that $(\ad_X)^3=(\ad_Y)^3=0$ on $L$.

Every $v\in V$ lies in the sum of the finitely many blocks supporting it, which is a finite-dimensional $L$-stable subspace. Hence $V$ is a locally finite $L$-module; in particular, $X$, $Y$, and every element of $L$ are locally finite on $V$.

Nevertheless, $L$ is infinite-dimensional. Indeed, $H:=[X,Y]$ satisfies $H|_{V_n}=nh$, whence
$$
(\ad_H)^m(X)|_{V_n}=2^m n^m e \quad\text{for all }m \geq 0.
$$
The operators $(\ad_H)^m(X)$ are linearly independent, so $L$ is infinite-dimensional. In particular, $\ad_H$ is not locally finite on $L$. Thus the analogues of conditions~(2) and~(3) of Corollary~\ref{cor:equiv} hold for the action of $L$ on $V$, while condition~(1) fails.

Concretely, {\em every} finite-dimensional subspace of $V$ lies in the $L$-invariant subspace $V_1\oplus\cdots\oplus V_N$ for some $N$. Finite-dimensional invariant subspaces are therefore readily available; but $L$ is infinite-dimensional, so it acts faithfully on none of them, in contrast with Lemma~\ref{lem:faithful-restriction}(3). \hfill\qed
\end{example}

\section{Adjoint local finiteness and finite-codimensional solvable ideals}\label{sec:adjoint}

\xc{Adjoint local finiteness.}
We now turn to the proof of Theorem~\ref{thm:main}. The first step is to show that when $A$ is a finitely generated $\K$-algebra, if $D \in \Der_\K(A)$ is  locally finite, then $\ad_D$ acts locally finitely on $\Der_\K(A)$:

\begin{lemma}[Adjoint local finiteness]
\label{lem:adjoint-lf}
Let $A$ be a finitely generated commutative $\K$-algebra. If $D\in\Der_\K(A)$ is locally finite on $A$, then $\ad_D:\Der_\K(A)\to\Der_\K(A)$ is locally finite.
\end{lemma}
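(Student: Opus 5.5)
Fix $\eta\in\Der_\K(A)$; we must produce a finite-dimensional subspace of $\Der_\K(A)$ containing $\eta$ and stable under $\ad_D$. The plan is to use faithful restriction (Lemma~\ref{lem:faithful-restriction}) to control the iterates $\ad_D^k(\eta)$ by their values on a fixed finite-dimensional subspace $E$ containing algebra generators. Choose generators $a_1,\ldots,a_N$ of $A$. Since $D$ is locally finite, each $a_i$ lies in a finite-dimensional $D$-stable subspace, and we let $E$ be their sum. Then $E$ is finite-dimensional, $D$-stable, and contains a set of algebra generators.

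Next we bound where the iterates send $E$. Set $W_0:=\eta(E)$, which is finite-dimensional, and let $W$ be the smallest $D$-stable subspace containing $W_0$. It is finite-dimensional because $D$ is locally finite and $W_0$ is finite-dimensional. Note that $\ad_D^k(\eta)=\sum_{j=0}^k(-1)^{k-j}\binom{k}{j}D^j\eta D^{k-j}$. For $e\in E$ we have $D^{k-j}e\in E$, hence $\eta D^{k-j}e\in W_0\subseteq W$, hence $D^j\eta D^{k-j}e\in W$. Therefore $\ad_D^k(\eta)(E)\subseteq W$ for all $k\ge0$. Equivalently, and without the binomial expansion, one checks by induction that $\mathcal S:=\{\xi\in\Der_\K(A):\xi(E)\subseteq W\}$ is $\ad_D$-stable: $[D,\xi](e)=D(\xi e)-\xi(De)\in W$, since $W$ is $D$-stable and $De\in E$.

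Finally, by Lemma~\ref{lem:faithful-restriction}(2), $\mathcal S$ is finite-dimensional, of dimension at most $(\dim_\K E)(\dim_\K W)$. It contains $\eta$ and is $\ad_D$-stable, so the $\ad_D$-orbit of $\eta$ spans a finite-dimensional space. This proves local finiteness of $\ad_D$.

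The only point requiring care is the choice of $E$. It must be simultaneously $D$-stable, so that $\xi(De)$ still lands in $W$, and contain generators, so that faithful restriction applies. Both properties come directly from local finiteness of $D$ and finite generation of $A$, so I expect no serious obstacle.
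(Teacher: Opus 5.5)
Your proposal is correct and follows essentially the same route as the paper. The paper also takes a finite-dimensional $D$-stable $E$ containing generators and a finite-dimensional $D$-stable $F\supseteq\delta(E)$. It then shows that $\{\eta:\eta(E)\subseteq F\}$ is finite-dimensional by Lemma~\ref{lem:faithful-restriction}(2) and $\ad_D$-stable by exactly your computation $[D,\xi](E)\subseteq D(F)+\xi(E)\subseteq F$.
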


\begin{proof}
Let $a_1,\ldots,a_r$ be algebra generators of $A$. From the local finiteness of $D$, we get a  finite-dimensional $D$-invariant subspace $E\subseteq A$ containing these generators; one may take $E:=\sum_{m=1}^{r}\Span_\K\{D^{i}(a_m):i\ge0\}$. 
Fix $\delta\in\Der_\K(A)$. The subspace $\delta(E)$ is finite-dimensional, so, appealing again to the local finiteness of $D$, we obtain a finite-dimensional $D$-invariant subspace $F\subseteq A$ with $\delta(E)\subseteq F$. 

Put $\mathcal V:=\{\eta\in\Der_\K(A):\eta(E)\subseteq F\}$. Since $E$ contains algebra generators of $A$ and $E$ and $F$ are finite-dimensional, $\mathcal V$ is finite-dimensional by Lemma~\ref{lem:faithful-restriction}(2). It is stable under $\ad_D$: if $\eta\in\mathcal V$, then, $E$ and $F$ being $D$-invariant,
$$
[D,\eta](E)\subseteq D(\eta(E))+\eta(D(E))\subseteq D(F)+\eta(E)\subseteq F.
$$
Since $\delta\in\mathcal V$, the entire $\ad_D$-orbit of $\delta$ lies in $\mathcal V$, so $\Span_\K\{(\ad_D)^m\delta:m\ge0\}$ is finite-dimensional. As $\delta$ was arbitrary, $\ad_D$ is locally finite on $\Der_\K(A)$.
\end{proof}

\xc{Finite-codimensional solvable ideals.}
We shall also use the following  fact:
\begin{lemma}\label{lem:finite-codim-ideal-fg}
Let $L$ be a finitely generated Lie algebra over a field $\K$, and let $M\triangleleft L$ be an ideal of finite codimension. Then $M$ is finitely generated as an ideal of $L$.
\end{lemma}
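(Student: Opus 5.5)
The plan is to produce an explicit finite generating set for $M$ as an ideal. Fix generators $x_1,\ldots,x_r$ of $L$. Since $\dim_\K L/M<\infty$, choose finitely many elements $y_1,\ldots,y_s\in L$ whose images span $L/M$. Enlarging this list by the $x_i$ does no harm, so we may assume $x_1,\ldots,x_r$ occur among the $y_j$. For each $i$ and $j$ we have the decomposition
$$
[x_i,y_j]=\sum_k c_{ijk}\,y_k+m_{ij},\qquad c_{ijk}\in\K,\ m_{ij}\in M,
$$
and likewise, for each $i$, $x_i=\sum_k d_{ik}y_k+m_i$ with $m_i\in M$. The candidate generating set is the finite set $S:=\{m_{ij}\}\cup\{m_i\}$. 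Let $J\subseteq M$ be the ideal of $L$ generated by $S$.

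The key step is to show that $Y:=\Span_\K\{y_1,\ldots,y_s\}+J$ equals $L$. First, $Y$ is stable under each $\ad_{x_i}$. Indeed, $[x_i,y_j]\in Y$ by the displayed relation, and $[x_i,J]\subseteq J$ because $J$ is an ideal. Next, the set $\{z\in L:\ad_z(Y)\subseteq Y\}$ is a Lie subalgebra of $L$ by the Jacobi identity, and it contains all the $x_i$, so it is all of $L$. Hence $Y$ is $\ad_L$-stable. Moreover $Y$ contains every $x_i$, since $x_i=\sum_k d_{ik}y_k+m_i$. Therefore $Y$ contains $[x_i,x_j]$ and, inductively, every iterated bracket of generators. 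Since these brackets span $L$, we get $Y=L$.

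To conclude, take $m\in M$ and write $m=\sum_k a_k y_k+j$ with $j\in J\subseteq M$. Then $\sum_k a_k y_k\in M$. We would like to arrange that the $y_k$ are linearly independent modulo $M$, but we enlarged the list by the $x_i$, which breaks this. To fix it, choose the $y$'s from the start as a basis of a complement $C$ of $M$ in $L$, with $L=C\oplus M$. Then replace the $x_i$ by their $C$- and $M$-components, putting the $M$-components $m_i$ into $S$. The argument above then runs with the $y_k$ in place of the $x_i$ among the generators, since $x_i\in\Span(y_k)+J$ still holds. Stability under $\ad_{x_i}$ now requires computing $[x_i,y_j]$, which is handled by the same decomposition, and $[x_i,J]\subseteq J$ holds as before. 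With the $y_k$ independent modulo $M$, the relation $\sum_k a_k y_k\in M$ forces all $a_k=0$, so $m=j\in J$. Thus $M=J$ is finitely generated as an ideal. The only delicate point is this bookkeeping: the spanning set must be chosen independent modulo $M$ while still absorbing the generators of $L$ up to elements of $J$.
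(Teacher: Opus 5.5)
Your argument is correct and is essentially the paper's proof: both lift a basis of $L/M$ to elements $y_k$, let $J\subseteq M$ be the ideal generated by finitely many correction terms (the $M$-components of the generators and of certain brackets), show $\Span_\K\{y_k\}+J=L$, and conclude $J=M$ because the $y_k$ are independent modulo $M$. The only difference is in which brackets are recorded. You use the brackets $[x_i,y_j]$, then $\ad_{x_i}$-stability and the fact that the normalizer of a subspace is a subalgebra. The paper instead uses the structure constants of $[y_i,y_j]$ and observes that the span of the $\overline y_i$ is a subalgebra of $L/J$ containing the images of the generators.
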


\begin{proof}
Let $x_1,\ldots,x_m$ generate $L$ as a Lie algebra, set $\pi: L \to Q:=L/M$ and let $e_1,\ldots,e_N$ be a $\K$-basis of $Q$. Let $y_i \in L$ be so that $\pi(y_i) = e_i$, $1 \leq i \leq N$. Let $c_{ij}^k, \alpha_{ai}\in \K$ be so that
$$
  [e_i,e_j]=\sum_k c_{ij}^k e_k
  \mbox{ and }
  \pi(x_a)= \sum_i \alpha_{ai}e_i.
$$
Set
$$
  t_{ij}:=[y_i,y_j]-\sum_k c_{ij}^k y_k \mbox{ and }
  s_a:=x_a-\sum_i\alpha_{ai}y_i.
$$
Then $t_{ij} \in M$ and $s_a \in M$. 

Let $J\triangleleft L$ be the ideal generated by the finitely many elements $t_{ij}$ and $s_a$. Then $J\subseteq M$. In $L/J$, the images $\overline y_i$ span a Lie subalgebra, since $[\overline y_i,\overline y_j]=\sum_k c_{ij}^k\overline y_k$. This subalgebra contains the images of the generators $x_a$, because $\overline x_a=\sum_i\alpha_{ai}\overline y_i$. Therefore, $\Span\{\overline y_i\}$ contains the subalgebra generated by the $\overline x_a$, which is $L/J$.

The quotient map $\rho:L/J\to L/M=Q$ sends $\overline y_i$ to $e_i$. Now if there exists $\lambda_i \in \K$ with $\sum_{i=1}^N \lambda_i \overline y_i=0$, then  $\sum_i \lambda_i \rho(\overline y_i)=\sum_i \lambda_i e_i=0$, which implies that $\lambda_i=0$, $1 \leq i \leq N$ since the $e_i$'s  are linearly independent. Thus the $\overline y_i$ form a basis of $L/J$, and the map $\rho$ is an isomorphism. Therefore $J=M$ and $M$ is generated as an ideal of $L$ by the finite set $\{t_{ij},s_a\}$.
\end{proof}

The following proposition handles the case of a finite-codimensional solvable ideal.
\begin{proposition}\label{prop:virtually-solvable}
Let $L$ be a finitely generated Lie algebra over $\K$ with generators $s_1,\ldots,s_m$ and suppose that
\begin{enumerate}
\item the operator $\ad_{s_i}$ is locally finite on $L$, for $1 \leq i \leq m$;
\item $L$ contains a solvable ideal   $J\triangleleft L$ of finite codimension.
\end{enumerate}
Then $\dim_\K L<\infty$.
\end{proposition}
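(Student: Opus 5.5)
The plan is to prove the statement by induction on the derived length of $J$, peeling off one layer $J/[J,J]$ at a time. At each step I will show that the abelian layer is finite-dimensional using Proposition~\ref{prop:fd-direction}. If $J=0$ there is nothing to prove, since then $L=L/J$ is finite-dimensional. So assume $J\neq 0$ and write $J':=[J,J]$. Since $J\triangleleft L$, the Jacobi identity shows $J'\triangleleft L$, and $J'$ is solvable with derived length one less than that of $J$. Hypothesis~(1) does not involve $J$, so once I show that $J'$ still has finite codimension in $L$, the induction hypothesis applied to $J'$ gives $\dim_\K L<\infty$.

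The key step is to show $\dim_\K J/J'<\infty$. Consider $V:=J/J'$ with the action of $L$ induced by $\ad$. This is well defined because $J$ and $J'$ are ideals.

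\begin{itemize}
\item Since $[J,J]\subseteq J'$, the ideal $J$ acts trivially on $V$. Hence the action factors through the finite-dimensional Lie algebra $Q:=L/J$, giving a representation $\rho:Q\to\End_\K(V)$.
\item $Q$ is generated by the images $\bar s_1,\ldots,\bar s_m$ of the generators. Each $\rho(\bar s_i)$ is the operator induced by $\ad_{s_i}$ on the subquotient $J/J'$. It is locally finite, because $\ad_{s_i}$ is locally finite on $L$ and preserves $J$ and $J'$, and images of finite-dimensional $\ad_{s_i}$-stable subspaces of $J$ are $\rho(\bar s_i)$-stable in $V$.
\item Proposition~\ref{prop:fd-direction} then shows that $Q$ acts locally finitely on $V$.
\end{itemize}

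Next I need finite generation of $V$. By Lemma~\ref{lem:finite-codim-ideal-fg}, $J$ is generated as an ideal of $L$ by finitely many elements $t_1,\ldots,t_p$. The ideal they generate is spanned by the iterated brackets $\ad_{x_1}\cdots\ad_{x_k}t_j$ with $x_\ell\in L$. Consequently $V$ is generated as a $Q$-module, i.e.\ as a $U(Q)$-module, by the images $\bar t_1,\ldots,\bar t_p$. Each $\bar t_j$ lies in a finite-dimensional $Q$-stable subspace, and the sum of these subspaces is a $Q$-submodule containing the generators, hence equals $V$. Therefore $\dim_\K V<\infty$, and so $\operatorname{codim}_L J'=\operatorname{codim}_L J+\dim_\K J/J'<\infty$, which completes the induction.

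The step needing most care is the passage to $Q$. One has to check that the $L$-action on $J/J'$ genuinely factors through the finite-dimensional quotient $L/J$, and that local finiteness of $\ad_{s_i}$ on $L$ descends to the subquotient. Only then can Proposition~\ref{prop:fd-direction} be invoked, since it requires a finite-dimensional Lie algebra generated by locally finite operators. Lemma~\ref{lem:finite-codim-ideal-fg} is what converts the local finiteness of the $Q$-action on $V$ into finite-dimensionality of $V$.
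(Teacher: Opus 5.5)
Your proposal is correct and follows essentially the same argument as the paper: peel off the abelian layers of the derived series of $J$, using Lemma~\ref{lem:finite-codim-ideal-fg} to make each layer a finitely generated module over the finite-dimensional quotient, and Proposition~\ref{prop:fd-direction} to make that action locally finite. The only difference is cosmetic: you induct on the derived length by replacing $J$ with $[J,J]$, whereas the paper inducts on $r$ to show that $L/J^{(r)}$ is finite-dimensional.
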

Finite generation and solvability alone do not force finite-dimensionality. To see this, note that the metabelian Lie algebra with basis $x,y_0,y_1,\ldots$ and brackets $[x,y_i]=y_{i+1}$ and $[y_i,y_j]=0$ is generated by $x$ and $y_0$ and is solvable, yet infinite-dimensional; here $\ad_x$ fails to be locally finite.

\begin{proof}
It suffices to show  $\dim_\K J<\infty$, since $L/J$ is finite-dimensional.  We do so as follows: we prove by induction that each quotient $L/J^{(r)}$ along the derived series is finite-dimensional. At each step, the finite codimensionality implies that the abelian quotient $J^{(r)}/J^{(r+1)}$ is finitely generated as an $L/J^{(r)}$-module, and the  local finiteness of the adjoint action makes this module locally finite; finite-dimensionality of the quotient then follows.

Let $J=J^{(0)}\supset J^{(1)}\supset\cdots\supset J^{(d)}=0$ be the derived series of $J$. Each term $J^{(r)}$ is an ideal of $L$. We  prove, by induction on $r$, that $L/J^{(r)}$  is finite-dimensional. For $r=0$, this is exactly the assumption that $J=J^{(0)}$  has finite codimension in $L$.

Assume  that $L/J^{(r)}$  is finite-dimensional.  Then $J^{(r)}$ has finite codimension in $L$.  Since $L$ is finitely generated, Lemma~\ref{lem:finite-codim-ideal-fg} shows that  $J^{(r)}$ is finitely generated as an ideal of $L$.

Consider the abelian quotient $J^{(r)}/J^{(r+1)}$.  Since both $J^{(r)}$ and $J^{(r+1)}$ are ideals of $L$, the adjoint action of $L$ on $J^{(r)}$ descends to an action on $J^{(r)}/J^{(r+1)}$.  Moreover, $J^{(r)}$ acts trivially on this quotient, because $[J^{(r)},J^{(r)}]=J^{(r+1)}$. Hence the adjoint action factors through the finite-dimensional Lie algebra $L/J^{(r)}$ and we have the exact sequence of $L$-modules:
 \begin{equation}\label{eq:exseq-J}
 0\to J^{(r)}/J^{(r+1)}
  \to L/J^{(r+1)}
  \to L/J^{(r)}
  \to 0.
\end{equation}
We now show that $\dim_\K J^{(r)}/J^{(r+1)}<\infty$. To this end, let $y_1,\ldots,y_p$ be a generating set for $J^{(r)}$ as an ideal of $L$.  Then the classes $\overline y_1,\ldots,\overline y_p$ generate $J^{(r)}/J^{(r+1)}$ as an $L$-module.  Since the $L$-action factors through $L/J^{(r)}$, the same classes generate $J^{(r)}/J^{(r+1)}$ as an $L/J^{(r)}$-module.   The quotient $L/J^{(r)}$ is finite-dimensional by induction hypothesis, say it is generated by the images of $s_1,\ldots,s_m \in L$.  The induced actions of these generators on $J^{(r)}/J^{(r+1)}$ are locally finite, since local finiteness descends to quotients.  Hence Proposition~\ref{prop:fd-direction} implies that $L/J^{(r)}$ acts locally finitely on $J^{(r)}/J^{(r+1)}$. Since this module is finitely generated and the action is locally finite, the sum of finite-dimensional invariant subspaces containing the finitely many generators contains the module, and therefore $\dim_\K J^{(r)}/J^{(r+1)}<\infty$. From the exact sequence~\eqref{eq:exseq-J}, it follows that $L/J^{(r+1)}$ is finite-dimensional.  This completes the induction.

Taking $r=d$, we get $L/J^{(d)}=L$ finite-dimensional, since $J^{(d)}=0$.
\end{proof}

We conclude this section by proving Theorem~\ref{thm:mainsolvable}, stated in the introduction.

\begin{proof}[Proof of Theorem~\ref{thm:mainsolvable}]
For each $i$, Corollary~\ref{cor:lf-subalg-fd} shows that $\fh_i$ is finite-dimensional. Choose a basis of each $\fh_i$ and write the collection of the basis elements as $\{s_1,\ldots,s_m\}$. Then $s_1,\ldots,s_m$ are locally finite derivations of $A$ and they generate $\fh$ as a Lie algebra.

By Lemma~\ref{lem:adjoint-lf}, each $\ad_{s_j}$ is locally finite on $\Der_\K(A)$, hence also on the subalgebra $\fh$. Since $\fh$ is solvable and generated by $s_1,\ldots,s_m$, Proposition~\ref{prop:virtually-solvable}, applied with $J:=\fh$, gives $\dim_\K\fh<\infty$.
Finally, the finite-dimensional Lie algebra $\fh$ acts on $A$ and is generated by the locally finite operators $s_1,\ldots,s_m$. Proposition~\ref{prop:fd-direction} implies that $\fh$ is locally finite on $A$.
\end{proof}


\section{Cofinite separating ideals and the filtration by vanishing order}\label{sec:vanishing-order}

We now come to the geometric part of the proof: we isolate a finite-codimensional subalgebra of $\Der_\K(A)$, consisting of derivations vanishing to second order along a finite closed subscheme meeting the closure of every associated point of $\Spec A$, on which local finiteness forces local nilpotence. This will convert the rigidity question into a {\em solvability} problem for locally nilpotent derivations, taken up in Section~\ref{sec:LNDAA}. Throughout this section, $A$ is a non-zero finitely generated commutative $\K$-algebra.

\xc{Cofinite separating ideals.}
We say that $I$ is a {\em cofinite separating ideal} of $A$ if
\begin{equation}\label{eq:defcosepideal}
\mbox{(1)} \dim_\K(A/I) < \infty; 	\quad\mbox{ and } \quad \mbox{(2)} \bigcap_{q \geq 1}I^q = 0.
\end{equation}

\begin{lemma}\label{lem:existencecosepideal}
Every finitely generated commutative $\K$-algebra has a cofinite separating ideal. Moreover, if $I$ is a cofinite ideal of $A$, then $\dim_\K(A/I^q)<\infty$ for every $q\geq1$.
\end{lemma}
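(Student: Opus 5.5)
The plan follows the hint in the introduction: pick a closed point in the closure of every associated point and take $I$ to be the product (or intersection) of the corresponding maximal ideals. Then use the Krull intersection theorem, in its form via associated primes, to show that $\bigcap_q I^q=0$.

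\emph{The second assertion.} Suppose $A/I$ is finite-dimensional. Each quotient $I^{j}/I^{j+1}$ is a finitely generated $A$-module, because $A$ is Noetherian and so $I^j$ is finitely generated. It is annihilated by $I$, so it is a finitely generated $A/I$-module and hence finite-dimensional. The filtration $A\supseteq I\supseteq I^2\supseteq\cdots\supseteq I^q$ then shows $\dim_\K A/I^q=\sum_{j<q}\dim_\K I^j/I^{j+1}<\infty$.

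\emph{Construction of $I$.} Since $A$ is Noetherian, $\Ass(A)=\{\fp_1,\ldots,\fp_s\}$ is finite, and $A\neq 0$ ensures $s\ge1$. For each $i$ choose a maximal ideal $\fm_i\supseteq\fp_i$; this is a closed point of $V(\fp_i)$. By the Nullstellensatz, $A/\fm_i$ is a finite extension of $\K$. Set $I:=\fm_1\cdots\fm_s$. Then $A/I$ has a filtration whose successive quotients are finitely generated modules over the various $A/\fm_i$, so it is finite-dimensional. (Alternatively, use the Chinese remainder theorem for the distinct $\fm_i$.)

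\emph{Separation.} Let $N:=\bigcap_q I^q$. By the Artin--Rees lemma, or directly by Krull's intersection theorem, we have $IN=N$. Nakayama's lemma in its determinant-trick form then gives an element $x\in I$ with $(1-x)N=0$. Suppose $N\neq0$. Then $\Ass(N)$ is nonempty and contained in $\Ass(A)$, so there is some $\fp_i$ and some $n\in N$ with $\operatorname{ann}(n)=\fp_i$. Now $(1-x)n=0$ gives $1-x\in\fp_i\subseteq\fm_i$. But $x\in I\subseteq\fm_i$, so $1\in\fm_i$, a contradiction. Hence $N=0$.

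The only delicate point is this last step. It requires that every associated prime, embedded ones included, lies inside some $\fm_i$ that contains $I$; that is exactly why a closed point must be chosen in the closure of every associated point. The remainder is standard commutative algebra.
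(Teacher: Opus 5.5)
Your proof is correct and follows essentially the same route as the paper: the same filtration argument for the second assertion, a maximal ideal over each associated prime (you take the product rather than the intersection, which is harmless since $I\subseteq\fm_i$ either way and your filtration gives finite-dimensionality), and the Krull intersection theorem with the determinant trick to get $(1-x)N=0$. Where the paper uses that the zerodivisors of $A$ are the union of the associated primes, you pick $n\in N$ with $\operatorname{ann}(n)=\fp_i$; both lead to the same contradiction $1\in\fm_i$.
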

\begin{proof}
We first prove the second assertion. Since $A$ is noetherian, each $I^j/I^{j+1}$ is finitely generated as an $A/I$-module. Since  $A/I$ is finite-dimensional over $\K$, then so is $I^j/I^{j+1}$, and the exact sequences
$$
0\longrightarrow I^j/I^{j+1}\longrightarrow A/I^{j+1}\longrightarrow A/I^j\longrightarrow0
$$
show by induction that $A/I^q$ is finite-dimensional for every $q\geq1$.

For the existence statement, the proof is constructive. The finite set $\Ass(A)$ is nonempty when $A\neq 0$. For each $\fp\in\Ass(A)$ choose a maximal ideal $\fm_\fp\supseteq\fp$, and set
\begin{equation}\label{eq:defasspointid}
I:=\bigcap_{\fp\in\Ass(A)}\fm_\fp.
\end{equation}
 The Chinese remainder theorem gives $A/I\cong\prod_{i=1}^{k}A/\fm_i$, where $\fm_1,\ldots,\fm_k$ denote the distinct ideals among the $\fm_\fp$, and Zariski's lemma shows that $A/I$ is finite-dimensional; thus $I$ satisfies (1). For (2),  let $x\in\bigcap_{q\geq1}I^q$. By the Krull intersection theorem~\cite[Chap.~10]{AtiyahMacdonald}, there is $a\in I$ with $(1-a)x=0$. Recall that the set of zero divisors of $A$ equals $\cup_{\fp \in \Ass(A)}\fp$~\cite[Proposition~4.7]{AtiyahMacdonald}. If $1-a$ belonged to an associated prime $\fp$, then  $a+(1-a)=1\in I+\fp\subseteq\fm_\fp$, a contradiction. Hence $1-a$ is a nonzerodivisor, and $x=0$.
\end{proof}

The use of associated primes in~\eqref{eq:defasspointid} is only to obtain the separatedness condition. In fact, for an ideal $I$ in a noetherian ring $A$, Krull intersection gives $\bigcap_{q\ge1}I^q=0$ if and only if $I+\fp\ne A$ for every $\fp\in\Ass(A)$. Equivalently, $V(I)$ meets the closure of every associated point of $\Spec A$. Thus, when $A$ is reduced, it is enough to choose closed points on the irreducible components; in the nonreduced case, embedded associated points must also be met, as the following example shows.

\begin{example}\label{ex:embedded-needed}
Let $A:=\K[x,\epsilon]/(\epsilon^2,(x-1)\epsilon)$. The reduced quotient is $\K[x]$, so there is one minimal prime, namely the nilradical $(\epsilon)$, and one irreducible component. Choosing the closed point $x=0$, with maximal ideal $\fm:=(x,\epsilon)$, one has $x\epsilon=\epsilon$, so $\epsilon=x^n\epsilon\in\fm^n$ for every $n\ge1$. Thus $0\ne\epsilon\in\bigcap_{n\ge1}\fm^n$, and separation fails. The embedded associated prime $(x-1,\epsilon)$ must also be used.
 \hfill \qed \end{example}

\xc{The filtration by vanishing order.}
From now on, $I$ denotes a cofinite separating ideal of $A$.  We use the convention $I^0:=A$. For $p\ge1$, set
\begin{equation}\label{eq:defDIp}
\mathcal D_I^p:=\{D\in\Der_\K(A):D(A)\subseteq I^p\},
\end{equation}
and set $\mathcal D_I^0:=\Der_\K(A)$. We have the following result:
\begin{lemma}\label{lem:vanishing-order-filtration}
For every $p\ge1$, the space $\mathcal D_I^p$ has finite codimension in $\Der_\K(A)$. If $D\in\mathcal D_I^p$ and $q\ge1$, then $D(I^q)\subseteq I^{p+q-1}$. Consequently, $[\mathcal D_I^p,\mathcal D_I^q]\subseteq\mathcal D_I^{p+q-1}$ for $p,q\geq 1$.
\end{lemma}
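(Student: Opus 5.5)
We prove the three assertions in order: finite codimension, the Leibniz estimate, and the bracket inclusion.

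\emph{Finite codimension.} We use Lemma~\ref{lem:faithful-restriction}. Fix algebra generators $a_1,\ldots,a_N$ of $A$ and put $E:=\Span_\K\{a_1,\ldots,a_N\}$.

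We claim that $D\in\mathcal D_I^p$ if and only if $D(a_i)\in I^p$ for all $i$. One direction is trivial. For the converse, if each $D(a_i)\in I^p$, then $D$ of a monomial in the $a_i$ is a sum of terms, each of which is a product of some $D(a_i)$ with an element of $A$. Hence $D(A)\subseteq I^p$, since $I^p$ is an ideal.

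Therefore $\mathcal D_I^p$ is the kernel of the linear map
$$
\Der_\K(A)\to\Hom_\K(E,A/I^p),\qquad D\mapsto\bigl(a\mapsto D(a)\bmod I^p\bigr).
$$
By Lemma~\ref{lem:existencecosepideal}, $A/I^p$ is finite-dimensional, so the target is finite-dimensional. Hence the codimension of $\mathcal D_I^p$ is at most $N\dim_\K(A/I^p)$.

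\emph{The estimate $D(I^q)\subseteq I^{p+q-1}$.} We argue by induction on $q$. The case $q=1$ holds because $D(I)\subseteq D(A)\subseteq I^p$. For the inductive step, $I^{q+1}$ is spanned by products $xy$ with $x\in I^q$ and $y\in I$. By Leibniz,
$$
D(xy)=D(x)\,y+x\,D(y)\in I^{p+q-1}I+I^qI^p\subseteq I^{p+q},
$$
which completes the induction.

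\emph{The bracket inclusion.} Take $D\in\mathcal D_I^p$ and $D'\in\mathcal D_I^q$ with $p,q\ge1$, and let $a\in A$. Then $D'(a)\in I^q$, so the estimate above gives $D(D'(a))\in I^{p+q-1}$. Symmetrically, $D(a)\in I^p$ gives $D'(D(a))\in I^{q+p-1}$. Subtracting, $[D,D'](a)\in I^{p+q-1}$, so $[D,D']\in\mathcal D_I^{p+q-1}$.

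The only step requiring any care is finite codimension, which needs both that derivations are determined on generators and that $A/I^p$ is finite-dimensional. Both facts are already provided by the earlier lemmas, so no step presents a real obstacle. Separatedness of $I$ plays no role in this lemma.
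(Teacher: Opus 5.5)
Your proof is correct and follows the paper's argument. Finite codimension comes from the evaluation-on-generators map into a finite-dimensional target, using $\dim_\K(A/I^p)<\infty$ from Lemma~\ref{lem:existencecosepideal}, and the Leibniz rule gives both the kernel identification and the remaining two assertions; you simply write out the induction on $q$ and the bracket computation that the paper treats as immediate.
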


\begin{proof}
Choose algebra generators $a_1,\ldots,a_N$ of $A$ and consider the evaluation map
$$
  \Phi:\Der_\K(A)\to(A/I^p)^{\oplus N},\quad \delta\mapsto(\delta(a_1),\ldots,\delta(a_N))\bmod I^p.
$$
Then $\ker\Phi=\mathcal D_I^p$: indeed, if $\delta(A)\subseteq I^p$, then clearly $\Phi(\delta)=0$. Conversely, if $\delta(a_i)\in I^p$ for every $i$, then the Leibniz rule implies that $\delta(f(a_1,\ldots,a_N))\in I^p$ for every polynomial $f$, and hence $\delta(A)\subseteq I^p$. As $A/I^p$ is finite-dimensional by Lemma~\ref{lem:existencecosepideal}, the codomain of $\Phi$ is finite-dimensional and thus $\mathcal D_I^p$ has finite codimension in $\Der_\K(A)$.

The second and third assertions follow immediately from the Leibniz rule and~\eqref{eq:defDIp}.
\end{proof}

The subalgebra $\mathcal D_I^2$ consists of the derivations vanishing to second order along the finite closed subscheme $\Spec(A/I)$. It is a finite-codimensional Lie subalgebra of $\Der_\K(A)$ and obeys
$$
D(A)\subseteq I^2,\quad D(I^q)\subseteq I^{q+1}\quad(q\ge1)
$$
for every $D\in\mathcal D_I^2$.

We note that from Lemma~\ref{lem:vanishing-order-filtration}, for $D_1,\ldots,D_s\in\mathcal D_I^2$ one has  $D_s\cdots D_1(A)\subseteq I^{s+1}$ and the lower central series obeys $\gamma_s(\mathcal D_I^2)\subseteq\mathcal D_I^{s+1}$. Consequently, $\bigcap_{s\ge1}\gamma_s(\mathcal D_I^2)=0$. If $\fg\subseteq\mathcal D_I^2$ is finite-dimensional, its lower central series stabilizes; the stable term is contained in every $\mathcal D_I^{s+1}$ and is therefore zero. In particular, {\em every finite-dimensional Lie subalgebra of $\mathcal D_I^2$ is nilpotent}.

\xc{Local finiteness forces local nilpotence.}
The estimate above also yields the following key consequence.

\begin{lemma}[Local finiteness forces local nilpotence]\label{lem:second-order-lnd}
Let $D\in\mathcal D_I^2$. If $D$ is locally finite on $A$, then $D$ is locally nilpotent on $A$. In particular, $$
\LFD(A)\cap\mathcal D_I^2=\LND(A)\cap\mathcal D_I^2.
$$
\end{lemma}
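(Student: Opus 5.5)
The plan is to exploit the fact that $D$ strictly raises the $I$-adic filtration, while the $D$-orbit of any element lives in a finite-dimensional subspace. Separatedness of $I$ will then force that finite-dimensional subspace to meet the deep layers $I^q$ only in $0$.

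Fix $a\in A$ and put $U:=\Span_\K\{a,Da,D^2a,\ldots\}$, which is finite-dimensional and $D$-stable because $D$ is locally finite. The first step is to show that $U\cap I^q=0$ for some $q$. Consider the descending chain of subspaces $U\cap I^q$, $q\ge1$. Since $U$ is finite-dimensional, this chain stabilizes at some subspace $U\cap I^{q_0}$. Its stable value is $U\cap\bigcap_{q}I^q$, and this is $0$ by condition (2) of~\eqref{eq:defcosepideal}. Hence $U\cap I^{q_0}=0$.

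The second step uses the vanishing-order estimates. By Lemma~\ref{lem:vanishing-order-filtration} with $p=2$, we have $D(A)\subseteq I^2$ and $D(I^q)\subseteq I^{q+1}$ for all $q\ge1$. Inductively, $D^n(a)\in I^{n+1}$ for every $n\ge1$. Take $n\ge q_0-1$. Then $D^n a\in U\cap I^{n+1}\subseteq U\cap I^{q_0}=0$. Thus $D$ is locally nilpotent.

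The displayed equality follows at once. Locally nilpotent derivations are trivially locally finite, since the orbit of $a$ is spanned by finitely many nonzero iterates, and this gives the inclusion $\supseteq$. The argument above gives $\subseteq$.

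There is no real obstacle here. The only point needing care is the interchange of the limit with the intersection with $U$, which is justified by finite-dimensionality: a descending chain of subspaces of $U$ stabilizes, and its stable value equals the intersection of the whole chain.
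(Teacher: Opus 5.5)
Your proof is correct and is essentially the paper's own argument: the chain $U\cap I^q$ stabilizes at $U\cap\bigcap_q I^q=0$ by finite-dimensionality and separatedness, and the estimate $D^n(a)\in I^{n+1}$ then forces $D^n(a)=0$ for large $n$. One cosmetic fix: take $n\ge\max(1,q_0-1)$ rather than $n\ge q_0-1$, since $D^n(a)\in I^{n+1}$ is only available for $n\ge1$.
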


\begin{proof}
For every $a\in A$ and $n\ge1$, one has $D^n(a)\in I^{n+1}$. Fix $a\in A$ and set $U:=\Span_\K\{D^n(a):n\ge0\}$.  The space $U$ is finite-dimensional by local finiteness. Hence the descending chain $U\cap I\supseteq U\cap I^2\supseteq\cdots$ stabilizes, and its stable value is $U\cap\bigcap_{q\ge1}I^q=0$, because $I$ is a cofinite separating ideal. Thus $U\cap I^N=0$ for some $N$ and, consequently,  $D^{N}(a)=0$. 
\end{proof}

\section{Proof of the rigidity theorem}\label{sec:mainproof}

The finite-codimensional subalgebra we will produce below is not, in general, an ideal of the generated Lie algebra. The following lemma of Petravchuk upgrades such a subalgebra to an ideal (Petravchuk uses the term \emph{almost solvable} for a Lie algebra containing a solvable ideal of finite codimension).

\begin{lemma}[Finite-codimensional solvable ideals, {\cite[Lemma~2(i),(ii)]{Petravchuk1999}}]\label{lem:Petravchuk}
Let $L$ be a Lie algebra over a field. If $L$ contains a solvable subalgebra of finite codimension, then $L$ contains a solvable ideal of finite codimension. In fact, one may choose such an ideal to be characteristic.
\end{lemma}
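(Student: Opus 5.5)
Let $S\subseteq L$ be a solvable subalgebra of finite codimension. The plan is to pass to the normal core of $S$ while controlling codimension, and then take a suitable term of the derived series of $S$ so that the resulting ideal is characteristic.

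\textbf{Step 1: an ideal of $L$ inside $S$ of finite codimension.} Set $n:=\dim_\K(L/S)$. Consider the chain
$$
S_0:=S,\qquad S_{k+1}:=\{x\in S_k:[L,x]\subseteq S_k\}.
$$
The key estimate is $\dim(S_k/S_{k+1})\le n\cdot\dim(L/S_k)$, hence finite. To prove it, choose representatives $y_1,\dots,y_m$ of a basis of $L/S_k$. Then $S_{k+1}$ contains the kernel of the linear map
$$
S_k\to (L/S_k)^m,\qquad x\mapsto([y_j,x]\bmod S_k)_j,
$$
provided $S_k$ is a subalgebra. Each $S_k$ is indeed a subalgebra: for $x,x'\in S_{k+1}$ and $y\in L$, Jacobi gives $[y,[x,x']]\in[S_k,x']+[x,S_k]$. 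This lies in $S_k$ only if $S_k$ is closed under bracket with $S_{k+1}$, which holds since $S_{k+1}\subseteq S_k$ and $S_k$ is a subalgebra, by induction. So each $S_k$ has finite codimension.

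The difficulty is that the chain need not stabilize. Instead I would follow Petravchuk and use the classical fact that a subalgebra of finite codimension $n$ contains an ideal of $L$ of finite codimension. Take the core
$$
C:=\{x\in L: \ad_{y_1}\cdots\ad_{y_j}x\in S\ \text{for all } j\ge0,\ y_i\in L\}.
$$
This is the largest ideal of $L$ contained in $S$. The task is to bound its codimension. For this I plan to invoke the standard result (Amayo) that $\operatorname{codim} C\le$ some function of $n$. Here the main obstacle is that Amayo's core theorem gives finite codimension of $C$ only after passing to a subalgebra $S'\subseteq S$ that is an ideal in an intermediate step. Concretely, I would first replace $S$ by $N_L(S)$-chains: if $S$ is not an ideal, its normalizer $N_L(S)$ strictly contains $S$. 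Induction on $n$ applied inside $N_L(S)$ is not directly useful, so I would instead cite Amayo's theorem: \emph{a subalgebra of finite codimension contains an ideal of finite codimension}. I regard this as the genuine content and the main obstacle; it is exactly Petravchuk's Lemma~2(i).

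\textbf{Step 2: characteristic ideal.} Given a solvable ideal $J$ of finite codimension, choose one of minimal codimension among solvable ideals of codimension at most that of $J$. A sum $J+J'$ of two solvable ideals is a solvable ideal, so the sum $R$ of all solvable ideals of finite codimension is itself such an ideal. To see this, note that any such sum is a solvable ideal of codimension at most $\operatorname{codim}J$, and codimensions are bounded below. One subtlety is that a union of an increasing chain of solvable ideals could have unbounded derived length. This is avoided because finite codimension forces stabilization after finitely many steps: the codimension decreases strictly each time the chain grows. Hence $R$ is the unique maximal solvable ideal of finite codimension. Any automorphism or derivation $\phi$ of $L$ maps $R$ to a solvable ideal of the same codimension (for derivations, use that $R+\phi(R)$ is a subspace containing $R$ and stable under brackets with $L$). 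Therefore $R$ is characteristic.
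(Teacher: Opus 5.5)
Your Step~1 has a genuine gap, and that step is where the whole content of the lemma lies. The ``classical fact'' you plan to cite, that a subalgebra of finite codimension contains an ideal of $L$ of finite codimension, is false. Take $W:=\K[x]\partial_x$: it is simple, yet $x\K[x]\partial_x$ has codimension one in $W$. That subalgebra is also self-normalizing, so your remark that $N_L(S)\supsetneq S$ whenever $S$ is not an ideal fails too. Step~1 never uses that $S$ is solvable, so it cannot work as written.

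Worse, bounding the codimension of the core of $S$ fails even when $S$ is solvable. Let $L:=\K e\ltimes\K[t]$, with $\K[t]$ an abelian ideal on which $\ad_e$ acts as multiplication by $t$. Let $S:=\ker\varphi\subseteq\K[t]$, where $\varphi(t^k):=1/k!$. Then $S$ is an abelian subalgebra of codimension $2$. An ideal of $L$ inside $S$ is an ideal $(g)$ of $\K[t]$ killed by $\varphi$. If $g\neq0$, the sequence $(1/k!)_{k\ge0}$ would satisfy a linear recurrence with constant coefficients, i.e.\ $e^z$ would be rational. So the core of $S$, which equals $\bigcap_k S_k$ for your chain, is zero, even though $L$ itself is a solvable ideal of finite codimension.

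In general, then, the ideal promised by the lemma does not lie in $S$. Producing it from the solvability of $S$ is exactly Petravchuk's Lemma~2(ii), which is what the paper cites for this step. It is not Lemma~2(i); that is the characteristic refinement, corresponding to your Step~2. Nothing in your proposal replaces Lemma~2(ii).

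Step~2 is correct for automorphisms. Your minimal-codimension argument gives a unique maximal solvable ideal $R$ of finite codimension, and automorphisms permute such ideals. For a derivation $\delta$, however, knowing that $R+\delta(R)$ is an ideal containing $R$ proves nothing unless $R+\delta(R)$ is solvable, which you do not show. Also, ``of the same codimension'' has no meaning for $\delta(R)$.

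This really can fail. Over a field $\F$ of characteristic $p>2$, put $A:=\F[t]/(t^p)$. The radical $\mathfrak{sl}_2(\F)\otimes tA$ of $\mathfrak{sl}_2(\F)\otimes A$ is not stable under the derivation $1\otimes\partial_t$.

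In characteristic zero you can repair the step as follows. Apply Lemma~2(ii) to $L\rtimes\K\delta$; this yields a $\delta$-stable solvable ideal $J_\delta\subseteq R$ of finite codimension in $L$. By maximality, $R/J_\delta$ is the radical of the finite-dimensional algebra $L/J_\delta$, and radicals are derivation-stable in characteristic zero. But this repair again rests on the missing step.
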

\begin{proof}
 By~\cite[Lemma~2(ii)]{Petravchuk1999}, the existence of a solvable subalgebra of finite codimension implies that $L$ is almost solvable. By~\cite[Lemma~2(i)]{Petravchuk1999}, an almost solvable Lie algebra contains a solvable characteristic ideal of finite codimension.
\end{proof}

We now can prove our main theorem:
\begin{proof}[Proof of Theorem~\ref{thm:main}]
Let $L\subseteq\LFD(A)$ be a finitely generated Lie subalgebra. The assertion is immediate when $A=0$, so we assume that $A\ne0$.

Let $I\subseteq A$ be a cofinite separating ideal of $A$, which exists by Lemma~\ref{lem:existencecosepideal}. By Lemma~\ref{lem:vanishing-order-filtration}, $\mathcal D_I^2$ has finite codimension in $\Der_\K(A)$, and therefore
$$
H_I:=L\cap\mathcal D_I^2
$$
is a Lie subalgebra of $L$ of finite codimension. Every $D\in H_I$ lies in $\LFD(A)$, so Lemma~\ref{lem:second-order-lnd} gives $D\in\LND(A)$. Hence $H_I\subseteq\LND(A)$, and Theorem~\ref{thm:lnd-solvable} shows that $H_I$ is solvable.

By the above, $H_I$ is a finite-codimensional solvable subalgebra of $L$, and thus using Lemma~\ref{lem:Petravchuk}, there exists a finite-codimensional solvable ideal $J\triangleleft L$. By Lemma~\ref{lem:adjoint-lf}, the generators of $L$ have locally finite adjoint action on $L$. Proposition~\ref{prop:virtually-solvable} therefore yields $\dim_\K L<\infty$.
\end{proof}

\begin{proof}[Proof of Corollary~\ref{cor:equiv}]
If $L\subseteq\LFD(A)$, then Theorem~\ref{thm:main} gives $(3)\Rightarrow(1)$. Proposition~\ref{prop:fd-direction} gives $(1)\Rightarrow(2)$, and $(2)\Rightarrow(3)$ is immediate.
\end{proof}

Corollary~\ref{cor:equiv} applies verbatim to the finitely generated subalgebras of an arbitrary Lie subalgebra $\fh\subseteq\LFD(A)$, and this yields a global form of Theorem~\ref{thm:main} with no finite-generation hypothesis on the Lie algebra.


\begin{proof}[Proof of Corollary~\ref{cor:global-lf}]
Let $\fh'\subseteq\fh$ be a finitely generated Lie subalgebra. Since $\fh'\subseteq\LFD(A)$, Theorem~\ref{thm:main} gives $\dim_\K\fh'<\infty$, and Corollary~\ref{cor:equiv} shows that $\fh'$ acts locally finitely on $A$. Hence $\fh$ is locally finite as a Lie algebra. Taking $\fh'=\Lie_\K(S)$ for a finite subset $S\subseteq\fh$ gives the last assertion.
\end{proof}

We finally record the localized form of Theorem~\ref{thm:main} announced in the introduction.
\begin{theorem}\label{thm:intro-localized}
Let $A$ be a finitely generated commutative $\K$-algebra, with $\K$ of characteristic zero, let $L\subseteq\Der_\K(A)$ be generated by finitely many locally finite derivations, and let $I$ be a cofinite separating ideal of $A$. Put $H_I:=L\cap\mathcal D_I^2$. Then the following conditions are equivalent:
\begin{enumerate}
\item $\dim_\K L<\infty$;
\item $H_I$ is nilpotent;
\item $H_I$ is solvable;
\item $H_I\subseteq\LFD(A)$;
\item $H_I\subseteq\LND(A)$.
\end{enumerate}
\end{theorem}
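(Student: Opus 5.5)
We will prove the chain of implications $(1)\Rightarrow(2)\Rightarrow(3)\Rightarrow(1)$, and attach $(4)$ and $(5)$ through $(1)\Rightarrow(4)\Rightarrow(5)\Rightarrow(3)$. Throughout, $L$ is generated by locally finite derivations $s_1,\ldots,s_m$.

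For $(1)\Rightarrow(2)$: if $\dim_\K L<\infty$, then $H_I$ is a finite-dimensional Lie subalgebra of $\mathcal D_I^2$. By the remark following Lemma~\ref{lem:vanishing-order-filtration}, every finite-dimensional Lie subalgebra of $\mathcal D_I^2$ is nilpotent. Concretely, the lower central series satisfies $\gamma_s(H_I)\subseteq\mathcal D_I^{s+1}$, and it must stabilize. Its stable term therefore lies in $\bigcap_s\mathcal D_I^{s+1}$, which is zero because $\bigcap_q I^q=0$. The implication $(2)\Rightarrow(3)$ is trivial.

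For $(1)\Rightarrow(4)$: if $L$ is finite-dimensional, Proposition~\ref{prop:fd-direction} shows that $L$ acts locally finitely on $A$. In particular $L\subseteq\LFD(A)$, and hence $H_I\subseteq\LFD(A)$. The implication $(4)\Rightarrow(5)$ is exactly Lemma~\ref{lem:second-order-lnd}, since $H_I\subseteq\mathcal D_I^2$. The implication $(5)\Rightarrow(3)$ is Theorem~\ref{thm:lnd-solvable}, applied to the Lie subalgebra $H_I\subseteq\LND(A)$; note that it needs no finite generation of $H_I$.

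The substantive step is $(3)\Rightarrow(1)$, and it repeats the second half of the proof of Theorem~\ref{thm:main}. By Lemma~\ref{lem:vanishing-order-filtration}, $\mathcal D_I^2$ has finite codimension in $\Der_\K(A)$, so $H_I=L\cap\mathcal D_I^2$ has finite codimension in $L$. Since $H_I$ is solvable by assumption, Lemma~\ref{lem:Petravchuk} supplies a solvable ideal $J\triangleleft L$ of finite codimension.

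It remains to check the adjoint hypothesis of Proposition~\ref{prop:virtually-solvable}. Each generator $s_i$ is locally finite on $A$, so Lemma~\ref{lem:adjoint-lf} shows that $\ad_{s_i}$ is locally finite on $\Der_\K(A)$. Since $L$ is a Lie subalgebra of $\Der_\K(A)$ stable under $\ad_{s_i}$, each $\ad_{s_i}$ is also locally finite on $L$. Here it matters that the hypothesis is only on the generators and not on all of $L$. Proposition~\ref{prop:virtually-solvable} then gives $\dim_\K L<\infty$.

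I expect no genuine obstacle beyond what is already established. The only point to watch is that $(3)\Rightarrow(1)$ must not invoke $L\subseteq\LFD(A)$: solvability of $H_I$ alone must feed into Lemma~\ref{lem:Petravchuk}, with adjoint local finiteness coming solely from the generators.
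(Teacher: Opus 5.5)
Your proposal is correct and follows the paper's proof: you use the same cycle $(1)\Rightarrow(2)\Rightarrow(3)\Rightarrow(1)$ with $(1)\Rightarrow(4)\Rightarrow(5)\Rightarrow(3)$ attached, and the same result at each step. Where the paper says the concluding argument of Theorem~\ref{thm:main} ``applies verbatim'' for $(3)\Rightarrow(1)$, you spell it out (Lemma~\ref{lem:Petravchuk}, Lemma~\ref{lem:adjoint-lf} for the generators only, then Proposition~\ref{prop:virtually-solvable}), and you correctly note that this step never uses $L\subseteq\LFD(A)$.
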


\begin{proof}
By Lemma~\ref{lem:vanishing-order-filtration}, $H_I$ has finite codimension in $L$. If $L$ is finite-dimensional, then $H_I$ is a finite-dimensional Lie subalgebra of $\mathcal D_I^2$, and is therefore nilpotent by the observation following Lemma~\ref{lem:vanishing-order-filtration}. This gives $(1)\Rightarrow(2)$, while $(2)\Rightarrow(3)$ is immediate. If $H_I$ is solvable, the concluding argument in the proof of Theorem~\ref{thm:main} applies verbatim and gives $\dim_\K L<\infty$, proving $(3)\Rightarrow(1)$. Proposition~\ref{prop:fd-direction} gives $(1)\Rightarrow(4)$, Lemma~\ref{lem:second-order-lnd} gives $(4)\Rightarrow(5)$, and Theorem~\ref{thm:lnd-solvable} gives $(5)\Rightarrow(3)$.
\end{proof}

\section{Locally nilpotent derivations on affine algebras}\label{sec:LNDAA}

It remains to prove Theorem~\ref{thm:lnd-solvable}. Besides its  role in the proof of Theorem~\ref{thm:main},  it is of independent interest, as it bears on the problem of describing the Lie subalgebras of $\LND(A)$. After the domain case, passing to the irreducible components gives the reduced case. The main difficulty is the nonreduced case: we show that solvability survives the passage through the nilradical.

\xc{Preliminary facts.}
We will use repeatedly two simple representation-theoretic facts. The first is   that solvability is stable under extensions and finite products. For a homomorphism $\rho:\fg\to\fh$ of Lie algebras, the exact sequence
\begin{equation}\label{eq:solvable-extension}
0\longrightarrow\ker\rho\longrightarrow\fg\longrightarrow\rho(\fg)\longrightarrow0
\end{equation}
shows that $\fg$ is solvable whenever $\rho(\fg)$ and $\ker\rho$ are: if $\rho(\fg)^{(r)}=0$ then $\fg^{(r)}\subseteq\ker\rho$, and if moreover $(\ker\rho)^{(s)}=0$, we get $\fg^{(r+s)}=0$, showing that $\fg$ is solvable.  Moreover, if $\fg\subseteq\fh$, then $\fg^{(r)}\subseteq\fh^{(r)}$, while $ (\fh_1\times\cdots\times\fh_n)^{(r)} = \fh_1^{(r)}\times\cdots\times\fh_n^{(r)}$.  Together, these observations show that  every Lie subalgebra of a finite product of solvable Lie algebras is solvable. The second fact is that, in finite dimension, nilpotence of every element of a Lie subalgebra of $\End_\F(V)$ persists under extension of scalars:

\begin{lemma}\label{lem:nilpotent-scalar-extension}
Let $\K$ be an infinite field, let $\F\supseteq \K$ be a field extension, and let $V$ be a finite-dimensional $\F$-vector space. Let $\fs\subseteq\End_\F(V)$ be a $\K$-Lie subalgebra consisting of nilpotent endomorphisms. Then every element of the $\F$-linear span $\F\fs\subseteq\End_\F(V)$ is nilpotent. In particular, $\F\fs$ is a nilpotent Lie algebra.
\end{lemma}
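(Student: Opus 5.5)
The plan is to reduce to Engel's theorem over $\K$ and then extend nilpotence to $\F$-linear combinations by a polynomial identity argument that uses the infiniteness of $\K$.

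First, $\fs$ is a $\K$-Lie subalgebra of $\End_\F(V)$, and $\End_\F(V)$ is finite-dimensional over $\F$. However, $\fs$ itself need not be finite-dimensional over $\K$, since $\F/\K$ may be infinite. I would therefore not apply Engel's theorem to $\fs$ directly. Instead I would pass to its $\F$-span $\fs':=\F\fs$, which is an $\F$-subspace of $\End_\F(V)$ and hence finite-dimensional over $\F$. It is closed under brackets, because $[\lambda x,\mu y]=\lambda\mu[x,y]$ for $\lambda,\mu\in\F$, since the bracket on $\End_\F(V)$ is $\F$-bilinear. Choose an $\F$-basis $x_1,\ldots,x_k$ of $\F\fs$ consisting of elements of $\fs$.

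The key step is to show that every $\F$-linear combination $\sum_i\lambda_ix_i$ is nilpotent. Put $n:=\dim_\F V$. Consider the polynomial map
$$
P:\F^k\to\End_\F(V),\qquad (\lambda_1,\ldots,\lambda_k)\mapsto\Bigl(\sum_i\lambda_ix_i\Bigr)^n .
$$
Each matrix entry of $P$, taken in a fixed $\F$-basis of $V$, is a polynomial in $\lambda_1,\ldots,\lambda_k$ with coefficients in $\F$. For $\lambda\in\K^k$, the element $\sum_i\lambda_ix_i$ lies in $\fs$, since $\fs$ is a $\K$-subspace. It is therefore nilpotent, and so its $n$-th power vanishes. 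Hence every entry polynomial vanishes on $\K^k$. Because $\K$ is infinite, a polynomial over $\F$ vanishing on $\K^k$ is the zero polynomial; this follows by induction on the number of variables, as in the standard argument. So $P\equiv0$ on $\F^k$, and every element of $\F\fs$ is nilpotent.

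Finally, $\F\fs$ is a finite-dimensional $\F$-Lie subalgebra of $\End_\F(V)$ consisting of nilpotent endomorphisms. By Engel's theorem it is simultaneously strictly upper triangularizable. It is therefore a nilpotent Lie algebra, and all its elements $\ad$-nilpotent. The main point requiring care is the polynomial identity step, namely that the vanishing of $P$ on $\K$-points forces its vanishing on $\F$-points. This is exactly where the hypothesis that $\K$ is infinite enters.
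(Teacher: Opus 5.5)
Your proof is correct and follows essentially the same route as the paper. You pick an $\F$-basis of $\F\fs$ inside $\fs$, use that a polynomial over $\F$ vanishing on $\K^k$ is zero because $\K$ is infinite, and conclude with Engel's theorem. The only cosmetic difference is the polynomial identity: you use the entries of $(\sum_i\lambda_ix_i)^n$, whereas the paper uses the coefficients of the characteristic polynomial together with Cayley--Hamilton.
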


\begin{proof}
Let $X_1,\ldots,X_p\in\fs$  be a  basis of $\F\fs$, and set $X(t):=t_1X_1+\cdots+t_pX_p$. Let $n:=\dim_\F V$, and write the characteristic polynomial of $X(t)$ as $$
  \chi_{X(t)}(u):=\det(uI_V-X(t))
  =u^n+c_{n-1}(t_1,\ldots,t_p)u^{n-1}+\cdots+c_0(t_1,\ldots,t_p),
$$
where each $c_i\in \F[t_1,\ldots,t_p]$. For every $(\lambda_1,\ldots,\lambda_p)\in \K^p$, the operator $X(\lambda)=\lambda_1X_1+\cdots+\lambda_pX_p$ belongs to $\fs$, hence is nilpotent. Therefore $\chi_{X(\lambda)}(u)=u^n$, so $c_i(\lambda_1,\ldots,\lambda_p)=0$ for all $i=0,\ldots,n-1$. Since $\K$ is infinite, each polynomial $c_i$ vanishes identically. Hence for every $(a_1,\ldots,a_p)\in \F^p$, the operator $a_1X_1+\cdots+a_pX_p$ has characteristic polynomial $u^n$, and is therefore nilpotent by  Cayley--Hamilton. This implies that every element of $\F\fs$ is nilpotent. The $\F$-span $\F\fs$ is closed under brackets, because $\fs$ is closed under brackets and the commutator is $\F$-bilinear. It is finite-dimensional as a subspace of $\End_\F(V)$. Hence Engel's theorem implies that $\F\fs$ is nilpotent.
\end{proof}

We also record two simple facts about derivations. 
\begin{lemma}\label{lem:derivations-preserve-minimal-primes}
Let $B$ be a reduced finitely generated $\K$-algebra and let $\fp$ be a minimal prime of $B$. Then for all $D\in\Der_\K(B)$ we have $D(\fp)\subseteq\fp$.
\end{lemma}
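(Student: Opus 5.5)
The claim is that every derivation $D$ of a reduced noetherian $\K$-algebra $B$ preserves each minimal prime $\fp$. The plan is the classical annihilator argument: in a reduced noetherian ring, a minimal prime is the annihilator of a single element outside it, and annihilators of elements are stable under derivations up to a radical.

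First, write $(0)=\fp_1\cap\cdots\cap\fp_k$ as the intersection of the finitely many minimal primes, which is possible because $B$ is reduced and noetherian. Take $\fp=\fp_1$. By prime avoidance and minimality, $\fp_2\cap\cdots\cap\fp_k\not\subseteq\fp_1$, so we can choose $s\in\fp_2\cap\cdots\cap\fp_k$ with $s\notin\fp_1$. (If $k=1$, take $s=1$.)

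We claim $\fp_1=\{b\in B: bs=0\}$.
\begin{itemize}
\item If $b\in\fp_1$, then $bs\in\fp_1\cap\cdots\cap\fp_k=0$.
\item Conversely, if $bs=0$, then $bs\in\fp_1$ and $s\notin\fp_1$, so $b\in\fp_1$.
\end{itemize}

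Now let $b\in\fp_1$, so that $bs=0$. Applying $D$ gives $D(b)s+bD(s)=0$. Multiplying by $D(b)$ gives
$$
D(b)^2s=-bD(b)D(s).
$$
Multiplying again by $s$ gives $D(b)^2s^2=-D(b)D(s)\,(bs)=0$, so $(D(b)s)^2=0$. Since $B$ is reduced, $D(b)s=0$. By the characterization above, $D(b)\in\fp_1$.

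The only step needing care is the choice of $s$, which uses minimality of $\fp$ to avoid containment of the other minimal primes. Finite generation and characteristic zero are not needed; only reducedness and the noetherian property (finitely many minimal primes) are used. An alternative route would localize at $\fp$, where $B_\fp$ is a field, but the argument above is more direct.
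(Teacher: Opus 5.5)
The paper gives no proof of this lemma (it is dismissed as standard), so there is no argument to compare yours against. Your annihilator argument is correct and complete, and it is one of the standard proofs.

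Two small remarks. First, the fact you use to choose $s$ is not prime avoidance proper. It is the companion statement that a prime containing a finite intersection $\fp_2\cap\cdots\cap\fp_k$ contains some $\fp_j$, because it contains their product. Minimality of $\fp_1$ and distinctness of the $\fp_j$ then give the contradiction, as you indicate.

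Second, the final computation can be shortened. Multiplying $D(b)s+bD(s)=0$ by $s$ gives $D(b)s^2=0\in\fp_1$ directly. Since $s^2\notin\fp_1$, primality forces $D(b)\in\fp_1$. Reducedness is then used only to obtain $\fp_1\cap\cdots\cap\fp_k=0$.

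Your closing observation is also right: neither finite generation nor characteristic zero is needed. Reducedness, however, cannot be dropped in positive characteristic. For example, $d/dx$ is a derivation of $\mathbb{F}_p[x]/(x^p)$ that does not preserve its unique prime $(x)$.
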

This is standard, so we omit the proof.

The second is that derivations preserve the nilradical. Let $\mathcal N$ be the nilradical of a commutative $\K$-algebra $A$. Every derivation $D\in\Der_\K(A)$ preserves $\mathcal N$. Indeed, if $x^n=0$, then the Leibniz formula for $D^n(x^n)$ gives $0=n!D(x)^n+xp$ for some $p\in A$: the term in which every factor is differentiated once is $n!D(x)^n$, while every other term contains a factor $x$. Hence $D(x)^n\in xA$ and $D(x)^{n^2}=0$.

\xc{The domain and reduced cases.}  A theorem of Bezushchak, Petravchuk, and Zelmanov~\cite[Theorem~1.4]{BezushchakPetravchukZelmanov2024} says that every Lie subalgebra of $\LND(A)$ is {\em locally nilpotent as a Lie algebra}, in the sense that each of its finitely generated subalgebras is nilpotent. We will use this result together with a result of Makedonskyi and Petravchuk~\cite[Corollary~1]{MakedonskyiPetravchuk2014} stating that a nilpotent Lie subalgebra of $Q\Der_\K(C)$, where $Q:=\Frac(C)$, has derived length at most its rank over $Q$, that is, the dimension of its $Q$-linear span. It remains to bound this rank uniformly over the finitely generated subalgebras, and to pass from these subalgebras to the whole Lie algebra.

\begin{proposition}[Derived-length bound for domains]
\label{prop:domain-lnd-bound}
Let $C$ be a finitely generated commutative domain over a field $\K$ of characteristic zero, and let $\fa\subseteq\LND(C)$ be a Lie subalgebra. Put
$$
B:=\bigcap_{D\in\fa}\ker D,\quad  F:=\Frac(B),\quad\text{ and }\quad Q:=\Frac(C).
$$
Then $$\operatorname{dl}(\fa)\leq\dim_Q(Q\fa)\leq\trdeg_F Q,
$$ where $Q\fa$ denotes the $Q$-linear span of $\fa$ in $\Der_\K(Q)$.
\end{proposition}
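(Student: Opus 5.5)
We prove the two inequalities separately. The second, $\dim_Q(Q\fa)\le\trdeg_F Q$, is the more elementary one. The first, $\operatorname{dl}(\fa)\le\dim_Q(Q\fa)$, combines the theorem of Bezushchak, Petravchuk and Zelmanov with the result of Makedonskyi and Petravchuk, and then passes from finitely generated subalgebras to all of $\fa$.

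For the rank bound, extend each $D\in\fa$ to a derivation of $Q$ by the quotient rule. Every element of $F=\Frac(B)$ is killed by all of $Q\fa$. Choose $D_1,\ldots,D_r\in\fa$ forming a $Q$-basis of $Q\fa$; finitely many suffice because $Q\fa\subseteq\Der_\K(Q)$ and $\dim_Q\Der_F(Q)=\trdeg_F Q$ when this is finite.

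To show that $r\le\trdeg_F Q$, we would produce $c_1,\ldots,c_r\in C$ with $\det(D_i c_j)\ne0$.
\begin{itemize}
\item Such elements exist. The $D_i$ are $Q$-linearly independent as derivations of $Q$, and any derivation of $Q$ is determined by its values on $C$. A standard Wronskian-type argument, inducting on $r$ and using the independence, then gives $c_j$ with nonzero determinant.
\item The $c_j$ are algebraically independent over $F$. Suppose $P(c_1,\ldots,c_r)=0$ with $P\in F[t_1,\ldots,t_r]$ of minimal degree. Applying each $D_i$ kills the coefficients of $P$, so $\sum_j (\partial_j P)(c)\,D_i c_j=0$ for all $i$. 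Since the matrix $(D_ic_j)$ is invertible, $(\partial_jP)(c)=0$ for all $j$. In characteristic zero some $\partial_jP$ is nonzero of smaller degree, contradicting minimality.
\end{itemize}
Hence $\trdeg_F Q\ge r$. If $\trdeg_F Q$ is infinite there is nothing to prove, but it is finite, since $Q$ is finitely generated over $\K\subseteq F$.

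For the derived-length bound, let $\fb\subseteq\fa$ be any finitely generated subalgebra. By \cite[Theorem~1.4]{BezushchakPetravchukZelmanov2024}, $\fb$ is nilpotent. Its $Q$-span $Q\fb\subseteq Q\Der_\K(C)$ has rank at most $r:=\dim_Q(Q\fa)$. The subtle point is that the cited corollary of \cite{MakedonskyiPetravchuk2014} concerns nilpotent subalgebras of $Q\Der_\K(C)$; I would apply it directly to $\fb$, viewed as a nilpotent $\K$-subalgebra, obtaining $\operatorname{dl}(\fb)\le\dim_Q(Q\fb)\le r$. If the citation requires a $Q$-subspace, I would instead pass to the $Q$-subalgebra generated by $\fb$ and check its nilpotence via Lemma~\ref{lem:nilpotent-scalar-extension}-style Zariski-density arguments. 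I expect this compatibility check to be the main obstacle.

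Finally, $\fa^{(r)}$ is the union of the $\fb^{(r)}$ over finitely generated $\fb$, because any element of $\fa^{(r)}$ is a finite combination of iterated brackets involving finitely many elements of $\fa$. Each $\fb^{(r)}=0$, so $\fa^{(r)}=0$ and $\operatorname{dl}(\fa)\le r$.
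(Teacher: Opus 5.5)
Your proof is correct. For the inequality $\operatorname{dl}(\fa)\le\dim_Q(Q\fa)$ it is exactly the paper's argument:
\begin{enumerate}
\item each finitely generated $\mathfrak b\subseteq\fa$ is nilpotent by \cite[Theorem~1.4]{BezushchakPetravchukZelmanov2024};
\item \cite[Corollary~1]{MakedonskyiPetravchuk2014} then gives $\operatorname{dl}(\mathfrak b)\le\dim_Q(Q\mathfrak b)\le\dim_Q(Q\fa)$;
\item $\fa^{(r)}$ is exhausted by the $\mathfrak b^{(r)}$.
\end{enumerate}
The difference lies in the rank bound. The paper observes $Q\fa\subseteq\Der_F(Q)$ and cites $\dim_Q\Der_F(Q)=\trdeg_F Q$. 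You instead prove $\dim_Q(Q\fa)\le\trdeg_F Q$ by hand: you choose $c_j\in C$ with $\det(D_ic_j)\ne0$ and show, by the minimal-degree argument, that the $c_j$ are algebraically independent over $F$. That argument is correct and avoids separating transcendence bases, needing only the Leibniz rule and characteristic zero; the citation is simply shorter. As written, however, you invoke $\dim_Q\Der_F(Q)=\trdeg_F Q$ to get a finite basis, and that fact already yields the inequality at once, which makes your determinant argument redundant. If you want your route to be independent of the citation, take finiteness from the determinant argument itself. It bounds every $Q$-independent family in $\fa$ by $\trdeg_F Q\le\trdeg_\K Q<\infty$ (note that $\K\subseteq B$).

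The point you flag as the main obstacle is not one. The Makedonskyi--Petravchuk corollary concerns $\K$-Lie subalgebras of $Q\Der_\K(C)$, with rank equal to the $Q$-dimension of the $Q$-span, and the paper applies it to the finitely generated subalgebra directly. Your fallback would in fact fail, because the $Q$-span of a nilpotent Lie algebra of derivations is usually not nilpotent. For example, $\mathfrak b=\K\partial_x\subseteq\LND(\K[x])$ has $Q\mathfrak b=\K(x)\partial_x\supseteq\Span_\K\{\partial_x,x\partial_x,x^2\partial_x\}\cong\mathfrak{sl}_2(\K)$. The scalar-extension lemma does not transfer to this setting: derivations are not $Q$-linear, and the bracket on $Q\Der_\K(C)$ is not $Q$-bilinear.
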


\begin{proof}
Put $n:=\trdeg_F Q$ and $d:=\dim_Q(Q\fa)$. 
Let $\fh\subseteq\fa$ be a finitely generated Lie subalgebra. By the theorem of Bezushchak, Petravchuk, and Zelmanov, $\fh$ is nilpotent. Every element of $\fa$ annihilates $B$, hence also $F=\Frac(B)$, and therefore
$$
Q\fh\subseteq Q\fa\subseteq\Der_F(Q).
$$
Since $Q/F$ is finitely generated and separable, we have (see, e.g., \cite[Theorem~23.12]{Morandi})
$$
\dim_Q\Der_F(Q)=\trdeg_F Q=n.
$$
Thus $\operatorname{rk}_Q\fh:=\dim_Q(Q\fh)\leq d\leq n$, and the result of Makedonskyi and Petravchuk~\cite[Corollary~1]{MakedonskyiPetravchuk2014} quoted above gives
$$
\operatorname{dl}(\fh)\leq\operatorname{rk}_Q\fh\leq d\leq n.
$$
This bound is independent of $\fh$. Every element $u\in\fa^{(d)}$ belongs to $\fh^{(d)}$ for some finitely generated subalgebra $\fh\subseteq\fa$: this follows by induction on $d$, since every element of a derived algebra is a finite sum of brackets. Hence $u=0$, so $\fa^{(d)}=0$ and $\operatorname{dl}(\fa)\leq d=\dim_Q(Q\fa)\leq n=\trdeg_F Q$, as required.
\end{proof}

For a field $F$ and $n\geq0$, put
$$
T_n(F):=
F\partial_{x_1}\oplus
F[x_1]\partial_{x_2}\oplus\cdots\oplus
F[x_1,\ldots,x_{n-1}]\partial_{x_n},
$$
with $T_0(F)=0$; this is the Lie algebra of triangular derivations of $F[x_1,\ldots,x_n]$.

An elementary computation now shows that the bound of Proposition~\ref{prop:domain-lnd-bound} is sharp:

\begin{lemma}[Derived length of the triangular Lie algebra]\label{lem:triangular-dl}
For every field $F$ of characteristic zero, $T_n(F)\subseteq\LND(F[x_1,\ldots,x_n])$ and has derived length exactly $n$.
\end{lemma}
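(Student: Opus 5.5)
Two things must be shown: that $T_n(F)$ consists of locally nilpotent derivations, and that its derived length is exactly $n$.

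\emph{Local nilpotence.} Let $D=\sum_i f_i\partial_{x_i}$ with $f_i\in F[x_1,\ldots,x_{i-1}]$. Grade monomials lexicographically with $x_n$ most significant. Each term $f_i\partial_{x_i}$ lowers the $x_i$-exponent by one and changes only the exponents of the variables $x_j$ with $j<i$. Hence $D$ strictly lowers the lex order of every monomial it touches. The monomials below a given monomial in this order are not finite in number, so I will argue instead by induction on $n$. The subring $F[x_1,\ldots,x_{n-1}]$ is $D$-stable and $D$ restricts to it as an element of $T_{n-1}(F)$, so by induction that restriction is locally nilpotent. Moreover $D(x_n)=f_n$ lies in this subring. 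A derivation that is locally nilpotent on a subring $R$ and sends the extra variable into $R$ is locally nilpotent on $R[x_n]$; this is the standard triangular fact, proved by induction on the $x_n$-degree using the Leibniz rule.

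\emph{Upper bound.} Proposition~\ref{prop:domain-lnd-bound} applies to $C=F[x_1,\ldots,x_n]$ viewed as a $\K=F$-algebra. It gives $\operatorname{dl}\leq\trdeg_F F(x_1,\ldots,x_n)\leq n$. (Alternatively, one can check by induction that $T_n^{(k)}\subseteq\bigoplus_{i>k}F[x_1,\ldots,x_{i-1}]\partial_{x_i}$, which gives $T_n^{(n)}=0$ directly.)

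\emph{Lower bound.} This is the main computation: we need $T_n^{(n-1)}\neq0$. I claim that for $0\le k\le n-1$ the derived algebra $T_n^{(k)}$ contains $x_1^a\,\partial_{x_{k+1}}$-type elements; concretely, that $T_n^{(k)}\ni g\,\partial_{x_{k+1}}$ for some nonzero $g\in F[x_1,\ldots,x_k]$, and indeed $\partial_{x_{k+1}}$ itself. I will prove by induction on $k$ the stronger statement that
\[
T_n^{(k)}\supseteq \textstyle\bigoplus_{i>k}F[x_{k+1},\ldots,x_{i-1}]\partial_{x_i}\cdot(\text{suitable}),
\]
and pin down the right shape while carrying out the induction. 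The simplest candidate is the statement that $\partial_{x_{k+1}}$ and $x_{k+1}\cdots$ lie in $T_n^{(k)}$, which I will check through the key identities
\[
[\partial_{x_j},\,x_j\partial_{x_i}]=\partial_{x_i}\quad(j<i),\qquad [\partial_{x_j},\,\tfrac12x_j^2\partial_{x_i}]=x_j\partial_{x_i}.
\]
Concretely, the inductive hypothesis I will use is that $T_n^{(k)}$ contains $p\,\partial_{x_i}$ for all $i>k$ and all $p\in F[x_{k+1},\ldots,x_{i-1}]$. For $k=0$ this holds by definition. For the step, take $i>k+1$ and $p\in F[x_{k+2},\ldots,x_{i-1}]$. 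Set $q=\int p\,dx_{k+2}$, which is possible in characteristic zero. Then
\[
[\partial_{x_{k+1}},\,x_{k+1}p\,\partial_{x_i}]=p\,\partial_{x_i}.
\]
Both factors lie in $T_n^{(k)}$ by the hypothesis, so $p\,\partial_{x_i}\in T_n^{(k+1)}$ with $p$ independent of $x_{k+1}$. This is exactly the hypothesis at level $k+1$, and $q$ is not actually needed. Taking $k=n-1$ gives $\partial_{x_n}\in T_n^{(n-1)}$, so the derived length is at least $n$. The only delicate point is phrasing the induction hypothesis so that both bracket factors are available at the previous level, and the formulation above does this. The case $n=0$ is trivial.
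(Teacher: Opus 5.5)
Your proof is correct. The local nilpotence argument is the paper's, and your ``alternative'' upper bound $T_n(F)^{(k)}\subseteq\bigoplus_{i>k}F[x_1,\ldots,x_{i-1}]\partial_{x_i}$ is exactly the inclusion the paper uses. Citing Proposition~\ref{prop:domain-lnd-bound} instead is not circular, since that proposition does not depend on this lemma and here the common kernel is $F$. It does, however, import the Bezushchak--Petravchuk--Zelmanov and Makedonskyi--Petravchuk theorems for something elementary, so the triangularity inclusion is the better choice.

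The real difference is in the lower bound. The paper proves the equality $T_n(F)^{(r)}=\bigoplus_{i>r}F[x_1,\ldots,x_{i-1}]\partial_{x_i}$. It writes each $h\,\partial_{x_j}$ of the next term as $[\partial_{x_{r+1}},H\,\partial_{x_j}]$, where $H$ is an antiderivative of $h$ in $x_{r+1}$; this uses characteristic zero. You track only the smaller family $\bigoplus_{i>k}F[x_{k+1},\ldots,x_{i-1}]\partial_{x_i}$, which your induction reproduces via $[\partial_{x_{k+1}},x_{k+1}p\,\partial_{x_i}]=p\,\partial_{x_i}$. This avoids integration entirely, so it is characteristic-free, and it suffices to get $\partial_{x_n}\in T_n(F)^{(n-1)}$. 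The cost is that you do not identify the derived series exactly.

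In a write-up, delete the exploratory material: the lex-order detour, the placeholder ``(suitable)'', and the unused $q$. Also delete the preliminary claim about ``$x_1^a\partial_{x_{k+1}}$-type elements'', which is false for $k=0$, since $x_1^a\partial_{x_1}\notin T_n(F)$ when $a\ge1$.
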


\begin{proof}
Every element of $T_n(F)$ sends each variable $x_j$ into $F[x_1,\ldots,x_{j-1}]$ and is thus locally nilpotent. To compute the derived length of $T_n(F)$, set
$$
U_r:=\bigoplus_{i=r+1}^{n}F[x_1,\ldots,x_{i-1}]\partial_{x_i} \mbox{ for }0\leq r\leq n,
$$
with $U_n=0$. We claim that $[U_r,U_r]=U_{r+1}$ for $0\leq r<n$.
The inclusion $[U_r,U_r]\subseteq U_{r+1}$ follows from triangularity. Conversely, it is easy to see that every generator of $U_{r+1}$ arises as a single bracket from $U_r$. 
Hence $T_n(F)^{(r)}=U_r$ for $0\le r\le n$. Since $U_{n-1}=F[x_1,\ldots,x_{n-1}]\partial_{x_n}\neq 0$ and $U_n=0$, the derived length of $T_n(F)$ is exactly $n$.
\end{proof}

\noindent Since $T_n(F)$ contains $\partial_{x_1},\ldots,\partial_{x_n}$, its common kernel in $F[x_1,\ldots,x_n]$ is $F$, and the bound of Proposition~\ref{prop:domain-lnd-bound} is attained in every transcendence degree.

\begin{proposition}\label{prop:reduced-lnd-solvable}
Let $R$ be a reduced finitely generated commutative $\K$-algebra with minimal primes $\fp_1,\ldots,\fp_r$ and let $\fa\subseteq\LND(R)$. Define $C_i:=R/\fp_i$, let $\fa_i\subseteq\LND(C_i)$ be the image of $\fa$, and set
$$
B_i:=\bigcap_{D\in\fa_i}\ker D,\quad F_i:=\Frac(B_i),\mbox{ and }\, n_i:=\trdeg_{F_i}\Frac(C_i).
$$
Then the natural map $\fa\longrightarrow\prod_i\fa_i$ is injective and $\operatorname{dl}(\fa)\leq\max_i n_i\leq\dim R$.
\end{proposition}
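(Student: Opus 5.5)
First, every $D\in\fa$ descends to each $C_i=R/\fp_i$. By Lemma~\ref{lem:derivations-preserve-minimal-primes}, $D(\fp_i)\subseteq\fp_i$, so $D$ induces a derivation $D_i$ of $C_i$. The map $D\mapsto D_i$ is a Lie algebra homomorphism, since brackets are computed compatibly on the quotient. Local nilpotence passes to quotients: if $D^n a=0$, then $D_i^n\bar a=0$. Hence $\fa_i\subseteq\LND(C_i)$ is a Lie subalgebra, and the map $\fa\to\prod_i\fa_i$ is a Lie homomorphism.

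Injectivity comes from reducedness. Suppose every $D_i=0$. Then $D(R)\subseteq\bigcap_i\fp_i$, and this intersection is the nilradical of $R$, which is $0$. So $D=0$.

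Next, each $C_i$ is a finitely generated domain over $\K$. Proposition~\ref{prop:domain-lnd-bound} applied to $\fa_i\subseteq\LND(C_i)$ gives $\operatorname{dl}(\fa_i)\leq n_i$. Since $\fa$ embeds in the product, and the derived series of a product is computed factorwise (the preliminary facts on solvability), we get $\fa^{(m)}\subseteq\prod_i\fa_i^{(m)}$. Taking $m=\max_i n_i$ kills every factor, so $\operatorname{dl}(\fa)\leq\max_i n_i$.

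Finally, we bound $n_i$ by $\dim R$. Since $\K\subseteq B_i$, we have $\K\subseteq F_i$, and therefore
$$
n_i=\trdeg_{F_i}\Frac(C_i)\leq\trdeg_\K\Frac(C_i).
$$
By Noether normalization, $\trdeg_\K\Frac(C_i)=\dim C_i$. Because $\fp_i$ is a minimal prime, $\dim C_i=\dim R/\fp_i\leq\dim R$.

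The only delicate input is the preservation of minimal primes, which is exactly Lemma~\ref{lem:derivations-preserve-minimal-primes}. With that in hand, the argument is mostly bookkeeping.
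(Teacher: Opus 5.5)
Your proposal is correct and follows essentially the same route as the paper: descend to the components $C_i$ via Lemma~\ref{lem:derivations-preserve-minimal-primes}, get injectivity from $\bigcap_i\fp_i=0$, apply Proposition~\ref{prop:domain-lnd-bound} to each $\fa_i$, and bound $n_i\leq\trdeg_\K\Frac(C_i)=\dim C_i\leq\dim R$. You simply spell out details the paper leaves implicit: that $D\mapsto D_i$ is a Lie homomorphism, that $\K\subseteq B_i$, and that the derived series of a product is computed factorwise.
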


\begin{proof}
Every derivation of $R$ preserves each $\fp_i$ by Lemma~\ref{lem:derivations-preserve-minimal-primes}, so the action of $\fa$ descends to the quotients $C_i$.  Since $R$ is reduced, the map $R\to\prod_iC_i$ is injective, and therefore so is $\fa\to\prod_i\fa_i$. Proposition~\ref{prop:domain-lnd-bound} gives $\operatorname{dl}(\fa_i)\leq n_i$ for every $i$. A Lie subalgebra of a finite product has derived length at most the maximum of the derived lengths of the factors, and $n_i\leq\trdeg_\K C_i\leq\dim R$.
\end{proof}

\xc{Nilpotent thickenings.}
The remaining work is to pass  to an arbitrary affine algebra.  Let $R$ be a reduced finitely generated $\K$-algebra and let $N$ be a nonzero finitely generated $R$-module. We write $\Supp(N)$ for the support of $N$, i.e., the set of primes $\fp\in\Spec(R)$ with $N_\fp\neq0$, and $\MinSupp(N)$ for the set of minimal elements of $\Supp(N)$. For $\fp\in\MinSupp(N)$, the module $N_\fp$ is supported only at the closed point $\fp R_\fp$ of $\Spec R_\fp$ and therefore has finite length. 

We set
\begin{equation}\label{eq:def-generic-kernel}
K(N):=\ker\Bigl(N\longrightarrow\prod_{\fp\in\MinSupp(N)}N_\fp\Bigr),
\end{equation}
the submodule of elements of $N$ that vanish at every generic point of $\Supp(N)$. If $u\in\End_R(N)$, then $u(K(N))\subseteq K(N)$, since $(u(x))_\fp=u_\fp(x_\fp)$ for every prime $\fp$. The next lemma shows that $K(N)$ has a support of  {\em strictly smaller} dimension than $N$.

\begin{lemma}[Support dimension decrease]\label{lem:support-dimension-decrease}
Let $R$ be a reduced finitely generated $\K$-algebra and let $N$ be a nonzero finitely generated $R$-module. Then $K(N)_\fp=0$ for every $\fp\in\MinSupp(N)$. Consequently, either $K(N)=0$ or
$$
\dim\Supp(K(N))<\dim\Supp(N).
$$
\end{lemma}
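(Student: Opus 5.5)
The plan is to prove the first assertion by localizing, and then to derive the dimension statement from the fact that support is compatible with submodules.

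\emph{Step 1: $K(N)_\fp=0$ for every $\fp\in\MinSupp(N)$.} Fix such a $\fp$. Localization is exact, so $K(N)_\fp$ is the kernel of the localized map $N_\fp\to\bigl(\prod_{\fq\in\MinSupp(N)}N_\fq\bigr)_\fp$. This map factors through the projection onto the factor $N_\fp$. I would therefore argue directly on elements. Let $x\in K(N)$. Since $x_\fp=0$ in $N_\fp$, there is $s\notin\fp$ with $sx=0$, and hence $x/t=0$ in $N_\fp$ for every $t\notin\fp$. As every element of $K(N)_\fp$ has the form $x/t$ with $x\in K(N)$ and $t\notin\fp$, we get $K(N)_\fp=0$. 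This step is essentially definitional; note that finiteness of $\MinSupp(N)$ is not needed for it.

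\emph{Step 2: the dimension drop.} Suppose $K(N)\neq0$. It is a submodule of a finitely generated module over the noetherian ring $R$, so it is finitely generated and $\Supp(K(N))=V(\operatorname{Ann}K(N))$ is closed. Moreover $\Supp(K(N))\subseteq\Supp(N)$ by exactness of localization. $\Supp(N)=V(\operatorname{Ann}N)$ is closed and its irreducible components are $V(\fp)$ for $\fp\in\MinSupp(N)$, of which there are finitely many since $R$ is noetherian. By Step 1, no $\fp\in\MinSupp(N)$ lies in $\Supp(K(N))$.

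Let $Z$ be an irreducible component of $\Supp(K(N))$, say $Z=V(\fq)$. It lies in some component $V(\fp)$ of $\Supp(N)$, so $\fq\supseteq\fp$. Since $\fp\notin\Supp(K(N))$, we have $\fq\neq\fp$, so $V(\fq)\subsetneq V(\fp)$.

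\emph{Step 3: conclude.} In the affine $\K$-algebra $R/\fp$, which is a domain, a proper closed irreducible subset has strictly smaller dimension: for finitely generated domains, $\dim R/\fq<\dim R/\fp$ whenever $\fq\supsetneq\fp$. Hence $\dim Z<\dim V(\fp)\le\dim\Supp(N)$. Taking the maximum over the finitely many components $Z$ gives $\dim\Supp(K(N))<\dim\Supp(N)$.

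The only non-formal point, and the main thing to verify, is this strict dimension inequality. It is where the affine, noetherian hypothesis is used; it follows from the catenarity and dimension formula for affine domains, since $\dim R/\fq=\trdeg$ and the transcendence degree drops for proper quotients of domains. Reducedness of $R$ plays no role here.
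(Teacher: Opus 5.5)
Your proposal is correct and follows essentially the paper's argument: $K(N)_\fp=0$ because every element of $K(N)$ already vanishes in $N_\fp$, and then every prime of $\Supp(K(N))$ strictly contains some $\fp\in\MinSupp(N)$, so the strict dimension drop for primes in affine domains gives the inequality. The differences are cosmetic. You argue Step 1 elementwise where the paper uses exactness of localization. You also pass through the irreducible components of $\Supp(K(N))$, whereas the paper takes an arbitrary $\fq\in\Supp(K(N))$ directly and so needs no finiteness of components.
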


\begin{proof}
Fix $\fp\in\MinSupp(N)$ and let $\lambda:N\to N_\fp$ be the localization map. Since $N_\fp$ is already an $R_\fp$-module, $\lambda_\fp$ is the identity of $N_\fp$, and localization being exact, $(\ker\lambda)_\fp=\ker(\lambda_\fp)=0$. As $K(N)\subseteq\ker\lambda$, we get $K(N)_\fp=0$.

Assume now that $K(N)\ne0$ and let $\fq\in\Supp(K(N))$. Then $\fq\in\Supp(N)$, because $K(N)\subseteq N$, so $\fq$ contains some $\fp\in\MinSupp(N)$; moreover $\fq\ne\fp$, since $K(N)_\fp=0$. Thus $\fq/\fp$ is a nonzero prime of the finitely generated $\K$-domain $R/\fp$, and therefore
$$
\dim R/\fq<\dim R/\fp\leq\dim\Supp(N).
$$
As $\fq\in\Supp(K(N))$ was arbitrary, the  inequality follows.
\end{proof}

The following lemma is proved by a simple d\'evissage and will be applied repeatedly below:

\begin{lemma}\label{lem:filtered-devissage}
Let $V$ be a $\K$-vector space equipped with a finite decreasing filtration
$$
V=V^0\supseteq V^1\supseteq\cdots\supseteq V^s=0.
$$
Let $\fg\subseteq\End_\K(V)$ be a Lie subalgebra preserving the filtration. Assume that, for every $0\le i\le s-1$, the image of $\fg$ in $\End_\K(V^i/V^{i+1})$ is solvable. Then $\fg$ is solvable.
\end{lemma}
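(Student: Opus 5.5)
The plan is an induction on the length $s$ of the filtration, using the standard fact recorded earlier in this section: an extension of a solvable Lie algebra by a solvable Lie algebra is solvable. If $s\le 1$ there is nothing to prove, because then $V=0$ or the filtration has a single quotient $V^0/V^1=V$, on which $\fg$ acts faithfully. For the inductive step I would consider the restriction homomorphism
$$
\rho:\fg\longrightarrow\End_\K(V^1),\qquad X\longmapsto X|_{V^1}.
$$
It is well defined because $\fg$ preserves $V^1$. It is also a Lie algebra homomorphism, since brackets of restrictions are restrictions of brackets.

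The image $\rho(\fg)$ preserves the shorter filtration $V^1\supseteq\cdots\supseteq V^s=0$, of length $s-1$. Its images on the quotients $V^i/V^{i+1}$ for $i\ge1$ coincide with the images of $\fg$ there, which are solvable by hypothesis. By induction, $\rho(\fg)$ is solvable.

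Next I would handle $\ker\rho$, the set of $X\in\fg$ with $X(V^1)=0$. Such an $X$ induces a well-defined map $\bar X:V/V^1\to V$. The key step is to show that $\ker\rho$ is solvable by comparing it with its image $\pi(\ker\rho)$ in $\End_\K(V^0/V^1)$, which is solvable because it is a subalgebra of the solvable image of $\fg$. Take $X,Y\in\ker\rho$ whose images in $\End(V/V^1)$ vanish. Then $X(V)\subseteq V^1$ and $X(V^1)=0$, so $X^2=0$, and $YX=XY=0$, so $[X,Y]=0$. Hence the kernel of $\pi|_{\ker\rho}$ is abelian. The extension sequence \eqref{eq:solvable-extension} then shows that $\ker\rho$ is solvable. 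Applying \eqref{eq:solvable-extension} once more, now to $\rho$, shows that $\fg$ is solvable, and the derived lengths add up along the way.

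The only step that needs care is this treatment of $\ker\rho$. A priori, an element that acts trivially on $V^1$ and on $V/V^1$ could be nonzero, so one cannot simply embed $\fg$ into the product of the quotient images. The resolution is that such elements square to zero and pairwise compose to zero, so they form an abelian ideal. Equivalently, I could argue directly: the kernel of $\fg\to\prod_i\End_\K(V^i/V^{i+1})$ consists of maps raising filtration degree, and is therefore nilpotent of class less than $s$. Then $\fg$ is an extension of a subalgebra of a finite product of solvable algebras by a nilpotent one, hence solvable. I would probably present this direct version, as it avoids the induction.
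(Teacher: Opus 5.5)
Your proposal is correct, and the direct version you say you would present is exactly the paper's proof. The image of $\fg$ in $\prod_{i}\End_\K(V^i/V^{i+1})$ is a subalgebra of a finite product of solvable algebras, while the kernel consists of filtration-raising maps, so any product of $s$ of them vanishes and the kernel is nilpotent. Your inductive variant (restricting to $V^1$, and noting that elements killing $V^1$ and acting trivially on $V/V^1$ form a square-zero abelian ideal) is also valid, just slightly longer.
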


\begin{proof}
Let $\alpha:\fg\to\prod_{i=0}^{s-1}\End_\K(V^i/V^{i+1})$ be the  homomorphism induced by the action on the associated graded pieces,  let  $\alpha_i:\fg\to\End_\K(V^i/V^{i+1})$ and let  $\fg_i:=\alpha_i(\fg)$.  We show that $\alpha(\fg)$ and $\ker \alpha$ are solvable.

For the image $\alpha(\fg)$, since $\alpha(\fg)\subseteq\prod_i\fg_i$, and since  each $\fg_i$ is solvable by hypothesis, the image $\alpha(\fg)$ is a Lie subalgebra of a {\em finite} product of solvable Lie algebras, and hence is solvable by~\eqref{eq:solvable-extension}.

We now show that $\ker\alpha$ is nilpotent, thus solvable. For any $T\in\ker\alpha$, we have $T(V^i)\subseteq V^{i+1}$ for every $i$. Hence, the product of any  $s$ elements of $\ker\alpha$ vanishes. Since every $s$-fold iterated commutator is a linear combination of such products, the $s$-th term of the lower central series of $\ker\alpha$ is zero. Thus $\ker\alpha$ is nilpotent.

Since $\fg$ is an extension of the solvable Lie algebra $\alpha(\fg)$ by the solvable ideal $\ker\alpha$, $\fg$ is solvable by~\eqref{eq:solvable-extension}.
\end{proof}

\xc{Locally nilpotent module endomorphisms.}
We now prove the statement that controls the nilradical part of Theorem~\ref{thm:lnd-solvable}.

\begin{lemma}\label{lem:module-engel}
Let $R$ be a reduced finitely generated commutative $\K$-algebra, let $M$ be a finitely generated $R$-module, and let $\fs\subseteq\End_R(M)$ be a $\K$-Lie subalgebra such that every element of $\fs$ is locally nilpotent on $M$. Then $\fs$ is solvable.
\end{lemma}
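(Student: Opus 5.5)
I will argue by induction on $\dim\Supp(M)$, with $M=0$ as the trivial base case. The tools are the generic kernel $K(M)$ of~\eqref{eq:def-generic-kernel}, the support-dimension decrease of Lemma~\ref{lem:support-dimension-decrease}, and the d\'evissage Lemma~\ref{lem:filtered-devissage}.

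Assume $M\ne0$ and set $K:=K(M)$. Every element of $\fs$ is $R$-linear, so it preserves $K$. Hence $\fs$ preserves the filtration $M\supseteq K\supseteq0$. Restriction to $K$ and the induced action on $M/K$ are again $R$-linear, and local nilpotence passes to submodules and quotients. The image $\fs_K$ of $\fs$ in $\End_R(K)$ is therefore a Lie algebra of locally nilpotent $R$-endomorphisms of the finitely generated module $K$. By Lemma~\ref{lem:support-dimension-decrease}, either $K=0$ or $\dim\Supp K<\dim\Supp M$, so induction shows that $\fs_K$ is solvable. By Lemma~\ref{lem:filtered-devissage}, applied with $V=M$ and the filtration $M\supseteq K\supseteq 0$, it then remains to show that the image of $\fs$ in $\End_\K(M/K)$ is solvable.

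For the quotient, I use that $M/K$ embeds $R$-linearly and $\fs$-equivariantly into $\prod_{\fp\in\MinSupp(M)}M_\fp$. Indeed, $K$ is exactly the kernel of $M\to\prod M_\fp$, and each $u\in\fs$ acts on $M_\fp$ by $u_\fp$. Since a Lie subalgebra of a finite product of solvable Lie algebras is solvable, it suffices to show that for each such $\fp$ the image $\fs_\fp:=\{u_\fp:u\in\fs\}$ acting on the image of $M$ in $M_\fp$ is solvable. I will in fact show that it is nilpotent.

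Fix $\fp$. Then $M_\fp$ is an $R_\fp$-module of finite length $\ell$, and the $u_\fp$ are $R_\fp$-linear. The filtration $M_\fp\supseteq\fp M_\fp\supseteq\fp^2M_\fp\supseteq\cdots$ terminates, because $\fp R_\fp$ is nilpotent on a finite-length module, and it is stable under every $u_\fp$. Each quotient $\fp^jM_\fp/\fp^{j+1}M_\fp$ is a finite-dimensional vector space over the residue field $\kappa(\fp)=R_\fp/\fp R_\fp$, and $\fs$ acts on it $\kappa(\fp)$-linearly.

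The key point is that $u_\fp$ is nilpotent on each graded piece. This piece is spanned over $\kappa(\fp)$ by images of finitely many elements of $M$. On each such element, $u$ is nilpotent, since it is locally nilpotent on $M$. By $R$-linearity, some power $u^N$ kills every $R$-multiple of these elements, and hence kills the whole graded piece after localization. The image of $\fs$ in $\End_{\kappa(\fp)}$ of the piece is therefore a $\K$-Lie algebra of nilpotent endomorphisms of a finite-dimensional $\kappa(\fp)$-space. Lemma~\ref{lem:nilpotent-scalar-extension} ($\K$ is infinite) shows that its $\kappa(\fp)$-span is nilpotent, so the image itself is nilpotent. Lemma~\ref{lem:filtered-devissage} then gives solvability of $\fs_\fp$ on $M_\fp$, and hence on the image of $M$ in $M_\fp$.

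I expect the main obstacle to be this localized step: checking that local nilpotence on $M$ yields uniform nilpotence on the finite-dimensional $\kappa(\fp)$-pieces. The reduction to finitely many generators and $R$-linearity seem to handle it, but care is needed because elements of $M_\fp$ are fractions. Since each $u_\fp$ is $R_\fp$-linear, $u_\fp^N(m/s)=u^N(m)/s$, so this works. The reducedness hypothesis on $R$ enters only through Lemma~\ref{lem:support-dimension-decrease}.
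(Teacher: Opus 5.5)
Your proposal is correct and follows essentially the same route as the paper: induction on $\dim\Supp(M)$ via $K(M)$, nilpotence on the $\kappa(\fp)$-graded pieces of $M_\fp$ via Lemma~\ref{lem:nilpotent-scalar-extension}, and two applications of Lemma~\ref{lem:filtered-devissage}. The only cosmetic differences are that you control $M/K$ through its equivariant embedding into $\prod_{\fp\in\MinSupp(M)}\operatorname{im}(M\to M_\fp)$ rather than through the inclusion $\ker\rho\subseteq\ker\sigma$. You also derive nilpotence on the graded pieces from generators and fractions rather than first noting that each $u\in\fs$ is nilpotent on all of $M$; these amount to the same argument.
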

We emphasize that $\fs$ is a Lie algebra of $R$-linear endomorphisms; the lemma therefore does not apply directly to Lie algebras of derivations, which are $\K$-linear but not $R$-linear.

We record one observation used throughout the proof. Since $M$ is finitely generated over $R$ and the operators in $\fs$ are $R$-linear, local nilpotence of  $D\in\fs$ on $M$ already implies its nilpotence on $M$: choosing $R$-generators $m_1,\ldots,m_N$ and integers $e_i$ with $D^{e_i}m_i=0$, one has $D^{e}M=0$ for $e=\max_i e_i$.

When $R$ is a domain and $M$ is torsion-free, the proof of Lemma~\ref{lem:module-engel} is  simpler. Let $Q:=\Frac(R)$. The natural map $M\to Q\otimes_RM$ is injective since $M$ is torsion free, and therefore $\fs$ embeds, as a $\K$-Lie algebra, into $\End_Q(Q\otimes_RM)$. Since $M$ is finitely generated,  $\dim_Q (Q \otimes_R M)< \infty$.  By the observation above, each $D\in\fs$ is nilpotent on $M$, so the induced endomorphism of $Q\otimes_RM$ is nilpotent. By Lemma~\ref{lem:nilpotent-scalar-extension}, the $Q$-linear span of the image of $\fs$ in $\End_Q(Q\otimes_RM)$ consists of nilpotent endomorphisms and is a Lie subalgebra. Engel's theorem then implies that this $Q$-Lie algebra is nilpotent. Hence $\fs$ is nilpotent, and in particular solvable.

In general, the passage to the fraction field is replaced by localization at the primes of $\MinSupp(M)$. For $\fp\in\MinSupp(M)$, the support of $M_\fp$ consists only of the maximal ideal $\fm_\fp:=\fp R_\fp$. Hence $\fm_\fp^kM_\fp=0$ for some $k\geq1$. Moreover, $\fm_\fp$ annihilates $\fm_\fp^jM_\fp/\fm_\fp^{j+1}M_\fp$, so this quotient is a vector space over the residue field $R_\fp/\fm_\fp$. Since $R_\fp$ is noetherian and $M_\fp$ is finitely generated, the quotient is finitely generated over $R_\fp$, and hence finite-dimensional over $R_\fp/\fm_\fp$.  Thus $M_\fp$ has finite length and the argument from the previous paragraph can then be applied to the action of $\fs$ on each successive quotient $\fm_\fp^jM_\fp/\fm_\fp^{j+1}M_\fp$.  What this localization does not see is precisely the submodule $K(M)$ of~\eqref{eq:def-generic-kernel}, whose support has smaller dimension by Lemma~\ref{lem:support-dimension-decrease}; an induction on $\dim\Supp(M)$ then concludes the proof.

\begin{proof}[Proof of Lemma~\ref{lem:module-engel}]

The assertion is trivial for $M=0$, so we assume that $M\neq0$ and argue by induction on $d:=\dim\Supp(M)$. Write
$$
\MinSupp(M)=\{\fp_1,\ldots,\fp_r\},\quad K:=K(M),
$$
and let
$$
\rho:\fs\longrightarrow\prod_{j=1}^r\End_{R_{\fp_j}}(M_{\fp_j})
$$
be the homomorphism induced by localization.

We first show that each factor of $\rho(\fs)$ is solvable. Fix $j$ and put $\fm_j:=\fp_jR_{\fp_j}$ and $Q_j:=R_{\fp_j}/\fm_j=\Frac(R/\fp_j)$. The module $M_{\fp_j}$ has finite length over $R_{\fp_j}$, so $\fm_j$ acts nilpotently on it: there is $e_j\geq1$ with
$$
M_{\fp_j}\supseteq\fm_jM_{\fp_j}\supseteq\cdots\supseteq\fm_j^{e_j}M_{\fp_j}=0,
$$
and the successive quotients of this filtration are finite-dimensional $Q_j$-vector spaces. The localized operators are $R_{\fp_j}$-linear, hence preserve the filtration, and every element of $\fs$ is nilpotent on $M$ by the observation preceding the proof, hence induces a nilpotent endomorphism on each quotient. By Lemma~\ref{lem:nilpotent-scalar-extension}, the $Q_j$-linear span of the image of $\fs$ in each of these quotients is a nilpotent Lie algebra. Thus the image of $\fs$ on each successive quotient $\fm_j^iM_{\fp_j}/\fm_j^{i+1}M_{\fp_j}$ is nilpotent, and hence solvable.  Lemma~\ref{lem:filtered-devissage} shows then that the image of $\fs$ in $\End_{R_{\fp_j}}(M_{\fp_j})$ is solvable. Hence $\rho(\fs)$ is a Lie subalgebra of a finite product of solvable Lie algebras, and is therefore solvable by~\eqref{eq:solvable-extension}.

The submodule $K$ is preserved by $\fs$, as observed after~\eqref{eq:def-generic-kernel}; it is finitely generated because $R$ is noetherian, and every element of $\fs$ remains locally nilpotent on it. By Lemma~\ref{lem:support-dimension-decrease}, either $K=0$ or $\dim\Supp(K)<d$. In both cases the image of $\fs$ in $\End_R(K)$ is solvable, trivially in the first and by the induction hypothesis in the second.

Let $\sigma:\fs\to\End_\K(M/K)$ be the induced action. If $D\in\ker\rho$, then $D_{\fp_j}=0$ for every $j$, so $(Da)_{\fp_j}=0$ for every $a\in M$ and every $j$; that is, $D(M)\subseteq K$ and $D\in\ker\sigma$. Thus $\ker\rho\subseteq\ker\sigma$, and $\sigma(\fs)$ is a quotient of the solvable Lie algebra $\rho(\fs)$. Both associated graded images for the filtration $M\supseteq K\supseteq0$ are therefore solvable, and Lemma~\ref{lem:filtered-devissage} gives that $\fs$ is solvable.
\end{proof}

We are now in a position to prove the second main result.

\begin{proof}[Proof of Theorem~\ref{thm:lnd-solvable}]
Let $\fa\subseteq\LND(A)$ be a Lie subalgebra. Recall that $\mathcal N:=\sqrt{0}$ is the nilradical of $A$, and choose $s$ such that $\mathcal N^s=0$, and set $R:=A/\mathcal N$. Since derivations preserve the nilradical, every element of $\fa$ induces a derivation of $R$, and we get a homomorphism $$\pi:\fa\to\Der_\K(R).$$ Moreover, $\pi(\fa)\subseteq\LND(R)$, since local nilpotence descends to quotients.

The image $\pi(\fa)$ is a Lie subalgebra of $\LND(R)$, and is therefore solvable by Proposition~\ref{prop:reduced-lnd-solvable}.

It remains to prove that $\fk:=\ker\pi$ is solvable. By definition, $\fk=\{D\in\fa:D(A)\subseteq\mathcal N\}$. We apply Lemma~\ref{lem:filtered-devissage} to the nilradical filtration
$$
A\supseteq\mathcal N\supseteq\mathcal N^2\supseteq\cdots\supseteq\mathcal N^s=0.
$$
Every $D\in\fk$ preserves this filtration since, as already mentioned,  derivations preserve $\mathcal N$ and hence preserve all powers of $\mathcal N$. The image of $\fk$ on $A/\mathcal N$ is zero. For $1\le i\le s-1$, set $M_i:=\mathcal N^i/\mathcal N^{i+1}$. Since $A$ is noetherian and $\mathcal N M_i=0$, each $M_i$ is a finitely generated $R=A/\mathcal N$-module.

We claim that the image of $\fk$ in $\End_\K(M_i)$ lies in $\End_R(M_i)$ and satisfies the hypotheses of Lemma~\ref{lem:module-engel}. Let $D\in\fk$. It induces a $\K$-linear endomorphism of $M_i$ and, even more, an $R$-linear endomorphism of $M_i$: If $a\in A$ and $m\in\mathcal N^i$, then $D(am)=D(a)m+aD(m)\equiv aD(m)\pmod{\mathcal N^{i+1}}$, because $D(a)\in\mathcal N$ and hence $D(a)m\in\mathcal N^{i+1}$. Thus the induced endomorphism of $M_i$ is $R$-linear. It is locally nilpotent because $D$ is locally nilpotent on $A$ and local nilpotence descends to quotients. Therefore the image of $\fk$ in $\End_R(M_i)$ is a Lie algebra of locally nilpotent $R$-linear endomorphisms of the finitely generated $R$-module $M_i$. By Lemma~\ref{lem:module-engel}, this image is solvable.

Thus every associated graded image of $\fk$ for the filtration $A\supseteq\mathcal N\supseteq\cdots\supseteq\mathcal N^s=0$ is solvable. Lemma~\ref{lem:filtered-devissage} gives that $\fk$ is solvable. Finally, $\fa$ is an extension of the solvable Lie algebra $\pi(\fa)$ by the solvable ideal $\ker\pi$, so $\fa$ is solvable by~\eqref{eq:solvable-extension}.

If $A$ is reduced, then $\mathcal N=0$ and Proposition~\ref{prop:reduced-lnd-solvable} gives $\operatorname{dl}(\fa)\leq\dim A$. If $A$ is a domain, Proposition~\ref{prop:domain-lnd-bound} gives the sharper estimate $\operatorname{dl}(\fa)\leq\trdeg_{\Frac(B)}\Frac(A)$ with $ B:=\bigcap_{D\in\fa}\ker D$. This concludes the proof.
\end{proof}
The example following the statement of Theorem~\ref{thm:lnd-solvable} in the introduction shows that the bound requires reducedness.

\section{Consequences and applications}\label{sec:consequences}

We now derive the applications of the rigidity theorem: algebraicity criteria for subgroups generated by algebraic subgroups, Popov's two-generator problem, and exact bilinear realizations of control systems. Throughout this section, $A$ denotes a finitely generated commutative $\K$-algebra.

\subsection*{Algebraicity of generated automorphism groups}

We now prove Corollary~\ref{cor:algebraicity-generated-subgroups}, stated in the introduction.

\begin{proof}[Proof of Corollary~\ref{cor:algebraicity-generated-subgroups}]
Each $\Lie(G_i)$ is contained in $\LFD(A)$. Choosing bases of the finitely many $\Lie(G_i)$ therefore gives finitely many locally finite derivations generating $L$, so Corollary~\ref{cor:equiv} gives the equivalence of (2)--(4). The equivalence with (1), and $\Lie(G)=L$, are Kraft and Zaidenberg's criterion~\cite[Theorem~A]{KraftZaidenberg2024}.
\end{proof}

\begin{corollary}[Solvable Lie algebras generated by locally nilpotent derivations]\label{cor:solvable-generated-lnd}
Let $D_1,\ldots,D_m\in\LND(A)$ and put $L:=\Lie_\K(D_1,\ldots,D_m)$. Then
$$
L\text{ is solvable}\quad\Longleftrightarrow\quad L\subseteq\LND(A).
$$
When these conditions hold, $L$ is finite-dimensional and nilpotent.
\end{corollary}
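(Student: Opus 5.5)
The converse direction is Theorem~\ref{thm:lnd-solvable}: if $L\subseteq\LND(A)$, then $L$ is a Lie subalgebra of $\LND(A)$ and is therefore solvable.

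For the forward direction, assume $L$ is solvable. Each generator $D_i$ is locally nilpotent, hence locally finite. By Lemma~\ref{lem:adjoint-lf}, each $\ad_{D_i}$ is locally finite on $\Der_\K(A)$, and so also on $L$. Proposition~\ref{prop:virtually-solvable}, applied with $J:=L$, then gives $\dim_\K L<\infty$. Proposition~\ref{prop:fd-direction} now shows that $L$ acts locally finitely on $A$. Take a finite-dimensional $L$-stable subspace $E\subseteq A$ that contains a set of algebra generators. By Lemma~\ref{lem:faithful-restriction}(3), restriction embeds $L$ into $\fg\subseteq\End_\K(E)$, and this embedding is an isomorphism of Lie algebras onto its image $\fg$.

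It remains to show that every element of $L$ is locally nilpotent. For this it suffices that every element of $\fg$ is nilpotent on $E$, since $E$ can be enlarged to contain any given $a\in A$. So we must show that a finite-dimensional solvable Lie algebra of operators on $E$, generated by nilpotent operators $X_i:=D_i|_E$, consists of nilpotent operators. Pass to $\overline\K$. By Lie's theorem, $\overline\K\fg$ is upper triangular in a suitable basis of $\overline\K\otimes E$. The map sending an operator to its diagonal is then a Lie algebra homomorphism to an abelian algebra. It kills every bracket, and it kills each $X_i$, since a nilpotent triangular matrix has zero diagonal. Hence it vanishes on all of $\overline\K\fg$, so every element of $\fg$ is strictly upper triangular and therefore nilpotent. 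This proves $L\subseteq\LND(A)$.

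Finally, $L$ is finite-dimensional and is isomorphic to $\fg$, which consists of strictly upper triangular matrices; hence $L$ is nilpotent. Alternatively, nilpotence follows from Engel's theorem, since every element of $\fg$ is nilpotent.

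The step I expect to need the most care is the triangularization argument that passes from nilpotent generators to nilpotence of all of $L$. It requires the base change to $\overline\K$ and the fact that the diagonal map is a homomorphism on the triangular algebra.
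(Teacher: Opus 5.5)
Your proof is correct and is essentially the paper's. The paper cites Theorem~\ref{thm:mainsolvable}, whose proof is exactly your chain of Lemma~\ref{lem:adjoint-lf}, Proposition~\ref{prop:virtually-solvable} with $J=L$, and Proposition~\ref{prop:fd-direction}; it then restricts faithfully to a finite-dimensional stable subspace and uses Lie's theorem over $\overline\K$ to see that the nilpotent generators, and hence all of $L$, act by strictly upper triangular matrices. Your diagonal-homomorphism phrasing and your device of enlarging $E$ to contain a given $a$, in place of the paper's Leibniz-rule step from generators to all of $A$, are only cosmetic variations.
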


\begin{proof}
If $L\subseteq\LND(A)$, Theorem~\ref{thm:lnd-solvable} implies that $L$ is solvable.  Conversely, suppose that $L$ is solvable. Each $\K D_i$ is a locally finite subalgebra of $\Der_\K(A)$, so Theorem~\ref{thm:mainsolvable} shows that $L$ is finite-dimensional and acts locally finitely on $A$. Choose a finite-dimensional $L$-stable subspace $U\subseteq A$ containing algebra generators; by Lemma~\ref{lem:faithful-restriction}(3), restriction gives an injective homomorphism of Lie algebras $L\longrightarrow\End_\K(U)$. Write $\overline L:=\overline\K\otimes_\K L$ and $\overline U:=\overline\K\otimes_\K U$, where $\overline\K$ is an algebraic closure of $\K$. Lie's theorem simultaneously upper triangularizes the action of $\overline L$ on $\overline U$.  Each $D_i$ is nilpotent on $U$, hence acts on $\overline U$ by a {\em strictly} upper triangular matrix. The $D_i$ generate $\overline L$ as a $\overline\K$-Lie algebra, and the strictly upper triangular matrices form a Lie algebra, so every element of $\overline L$ acts on $\overline U$ by a strictly upper triangular matrix. Restriction to $\overline U$ is still faithful on $\overline L$, so $\overline L$ is nilpotent, and hence so is $L$; moreover every $D\in L$ is nilpotent on $U$.

Since $U$ contains algebra generators of $A$, the Leibniz rule then shows that every $D\in L$ is locally nilpotent on $A$. Therefore $L\subseteq\LND(A)$.
\end{proof}

\subsection*{Popov's problem on two unipotent one-parameter subgroups}

We now specialize Corollary~\ref{cor:algebraicity-generated-subgroups} to its smallest nontrivial instance: $X=\mathbb A^n$ and $m=2$, with $G_1$ and $G_2$ the one-parameter unipotent subgroups generated by two locally nilpotent derivations. This case belongs to the classical study of $\mathbb G_a$- and unipotent actions on affine space~\cite{Rentschler1968,Popov1987,Snow1989}, and it is the problem posed by Popov in 2005~\cite[Problem~3.1]{PopovProblems2005} (recorded also as~\cite[Question~11.30]{Freudenburg2006}) and stated in the introduction. The specialization yields the following intrinsic criterion; for the ind-group structure on $\Aut(\mathbb A^n)$ and its topology, see~\cite[\S 1]{KraftZaidenberg2024}.

\begin{corollary}\label{cor:popov}
Let $\K$ be algebraically closed of characteristic zero, let $D,E\in\LND(\K[x_1,\ldots,x_n])$, and let $G$ be the minimal subgroup of $\Aut(\mathbb A^n)$, closed in the ind-group topology, containing the two one-parameter unipotent subgroups $
  \{\exp(tD):t\in\K\}$ and $\{\exp(tE):t\in\K\}$. Put $L:=\Lie_\K(D,E)\subseteq\Der_\K(\K[x_1,\ldots,x_n])$.
Then
$$
\dim G<\infty\iff\dim_\K L<\infty\iff L\subseteq\LFD(\K[x_1,\ldots,x_n]).
$$
\end{corollary}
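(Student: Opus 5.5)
The second equivalence, $\dim_\K L<\infty\iff L\subseteq\LFD(\K[x_1,\ldots,x_n])$, I will deduce from Corollary~\ref{cor:equiv}. Since $D$ and $E$ are locally nilpotent, they are in particular locally finite, so $L$ is generated by finitely many locally finite derivations. Hence conditions (1) and (3) of Corollary~\ref{cor:equiv} are equivalent for $L$.

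For the first equivalence I intend to reduce to Corollary~\ref{cor:algebraicity-generated-subgroups}, applied with $X=\mathbb A^n$, $A=\K[x_1,\ldots,x_n]$, $m=2$, and $G_1=\{\exp(tD)\}$, $G_2=\{\exp(tE)\}$. Each $G_i$ is a connected algebraic subgroup of $\Aut(\mathbb A^n)$. This is because a locally nilpotent derivation integrates to a regular $\mathbb G_a$-action $t\mapsto\exp(tD)$: the map is a homomorphism from $\mathbb G_a$ whose image is closed and isomorphic to $\mathbb G_a$ when $D\neq0$, and trivial otherwise. Its tangent algebra is $\Lie(G_1)=\K D$, and likewise $\Lie(G_2)=\K E$. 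Consequently, the Lie algebra in Corollary~\ref{cor:algebraicity-generated-subgroups} is exactly $L=\Lie_\K(D,E)$.

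The remaining point is to compare Popov's $G$, the minimal ind-closed subgroup containing $G_1,G_2$, with the abstract subgroup $G':=\langle G_1,G_2\rangle$ used in Corollary~\ref{cor:algebraicity-generated-subgroups}. By definition $G=\overline{G'}$ in the ind-topology. By Kraft--Zaidenberg~\cite{KraftZaidenberg2024}, a subgroup generated by connected algebraic subgroups is an algebraically generated subgroup. For such subgroups, $\overline{G'}$ is a (nested) ind-subgroup whose Lie algebra, and hence dimension, is governed by $L$. In particular, $G'$ is algebraic if and only if it is closed and finite-dimensional, in which case $G=G'$.

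Given this, the two directions go as follows.
\begin{enumerate}
\item If $\dim_\K L<\infty$, then Corollary~\ref{cor:algebraicity-generated-subgroups} makes $G'$ algebraic, hence closed. So $G=G'$ and $\dim G=\dim L<\infty$.
\item Conversely, suppose $\dim G<\infty$. A finite-dimensional closed subgroup of the ind-group $\Aut(\mathbb A^n)$ is an algebraic group. Its Lie algebra $\Lie(G)\subseteq\Der_\K(A)$ contains $D$ and $E$ and is a Lie subalgebra, so it contains $L$. Hence $\dim_\K L\le\dim G<\infty$.
\end{enumerate}

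The main obstacle is this last comparison. It needs the ind-group facts that a finite-dimensional closed subgroup is algebraic and that its Lie algebra is a Lie subalgebra of vector fields containing the tangent vectors of its one-parameter subgroups. I would cite these from \cite[\S1]{KraftZaidenberg2024} rather than reprove them.
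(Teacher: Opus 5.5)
Your treatment of the second equivalence, and of the implication $\dim_\K L<\infty\Rightarrow\dim G<\infty$, matches the paper's. The gap is in step (2). The fact you plan to cite, that a finite-dimensional closed subgroup of $\Aut(\mathbb A^n)$ is an algebraic group, is false without a connectedness hypothesis.

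For example, $h(x,y):=(y,x+y^2)$ satisfies $\deg h^{k}=2^{|k|}$. Hence $\langle h\rangle$ meets each filtration piece $\Aut(\mathbb A^2)_{\le d}$ in a finite set. It is therefore a closed subgroup of dimension $0$ that is infinite, and so not algebraic. What step (2) lacks is connectedness, which is exactly where the specific origin of $G$, as coming from a group generated by connected algebraic families, has to enter.

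The paper supplies this by working with the abstract subgroup $G_0:=\langle G_D,G_E\rangle$ rather than its closure. Every $g\in G_0$ lies in an algebraic family $T\mapsto\exp(t_1i_1)\cdots\exp(t_ri_r)$, with $T\in\mathbb A^r$ and $i_j\in\{D,E\}$. This family is contained in $G_0$ and passes through the identity at $T=0$, so $G_0$ is connected in Ramanujam's sense. Since $G_0\subseteq G$ and $\dim G<\infty$, the images of such families have dimension at most $\dim G$. Ramanujam's theorem then makes $G_0$ a connected algebraic subgroup.

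Its universal property turns $t\mapsto\exp(tD)$ and $t\mapsto\exp(tE)$ into morphisms $\mathbb G_a\to G_0$. Hence $D,E\in\Lie(G_0)$, and $L\subseteq\Lie(G_0)$ is finite-dimensional. With this substitution in step (2), the rest of your argument stands.
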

\begin{proof}[Proof of Corollary~\ref{cor:popov}]
Write $G_D:=\{\exp(tD):t\in\K\}$ and $G_E:=\{\exp(tE):t\in\K\}$, and let $G_0:=\langle G_D,G_E\rangle$ be the subgroup they generate. For every $g\in G_0$, there exist $r\geq1$ and $i_1,\ldots,i_r\in\{D,E\}$ such that $g$ belongs to the algebraic
family
$$
\varphi_{T}
:=\exp(t_1i_1)\cdots\exp(t_ri_r),
\mbox{ with }
T=(t_1,\ldots,t_r)\in\mathbb A^r.
$$
This family is contained in $G_0$ and contains the identity at $T=0$. Hence $G_0$ is a connected subgroup of $\Aut(\mathbb A^n)$ in the sense of~\cite{Ramanujam1964}; see also~\cite[\S2]{Popov2014}. If $\dim G<\infty$, then $G_0\subseteq G$ is finite-dimensional as well, so Ramanujam's theorem~\cite{Ramanujam1964}  makes $G_0$ a connected algebraic subgroup of $\Aut(\mathbb A^n)$. By the universal property in Ramanujam's theorem, the families $t\mapsto\exp(tD)$ and $t\mapsto\exp(tE)$ define morphisms $\mathbb G_a\to G_0$; hence $D,E\in\Lie(G_0)$ and $L\subseteq\Lie(G_0)$ is finite-dimensional. Conversely, if $L$ is finite-dimensional, Corollary~\ref{cor:algebraicity-generated-subgroups} shows that $G_0$ is algebraic, hence closed, and since $G$ is the minimal closed subgroup containing the same one-parameter groups, $G=G_0$. The equivalence between finite-dimensionality of $L$ and  $L\subseteq\LFD(\K[x_1,\ldots,x_n])$ is Corollary~\ref{cor:equiv}.
\end{proof}

\subsection*{Embeddings of nonlinear control systems}
We now derive the consequences for the super-linearization of control-affine systems.
\begin{corollary}[Super-linearization criterion]\label{cor:superlinearization}
Let $\K\in\{\mathbb R,\mathbb C\}$, let $f_0,\ldots,f_r$ be polynomial vector fields on $\K^n$ with associated derivations $D_0,\ldots,D_r$ of $\K[x_1,\ldots,x_n]$, and set $L:=\Lie_\K(D_0,\ldots,D_r)$. Then the following are equivalent:
\begin{enumerate}
\item there is a finite-dimensional subspace $U\subseteq\K[x_1,\ldots,x_n]$ containing $1,x_1,\ldots,x_n$ and satisfying $D_i(U)\subseteq U$ for all $0\le i\le r$;
\item $L\subseteq\LFD(\K[x_1,\ldots,x_n])$;
\item $D_i\in\LFD(\K[x_1,\ldots,x_n])$ for every $i$ and $\dim_\K L<\infty$.
\end{enumerate}
\end{corollary}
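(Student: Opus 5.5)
I will prove the cycle $(1)\Rightarrow(2)\Rightarrow(3)\Rightarrow(1)$, with Corollary~\ref{cor:equiv} doing most of the work. Put $A:=\K[x_1,\ldots,x_n]$, which is finitely generated and has characteristic zero, so the earlier results apply with $\K\in\{\mathbb R,\mathbb C\}$.

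\emph{$(1)\Rightarrow(2)$.} Let $U$ be as in (1). Since $U$ is $D_i$-stable and contains $1,x_1,\ldots,x_n$, the multiplicative structure gives, for every $k\ge1$, a finite-dimensional $D_i$-stable space $U^k:=\Span_\K\{u_1\cdots u_k:u_j\in U\}$; indeed $D_i(u_1\cdots u_k)=\sum_j u_1\cdots D_i(u_j)\cdots u_k\in U^k$. Every polynomial of degree at most $k$ lies in $U^k$, because $1\in U$ and $x_j\in U$. Hence the subspaces $U^k$ are stable under all $D_i$ simultaneously and exhaust $A$. So the Lie algebra $L$ preserves each $U^k$, being generated by the $D_i$: the stabilizer of a subspace is a Lie subalgebra. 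Thus $L$ acts locally finitely on $A$, and in particular $L\subseteq\LFD(A)$.

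\emph{$(2)\Rightarrow(3)$.} If $L\subseteq\LFD(A)$, then each generator $D_i$ is locally finite. Since $L$ is generated by the finitely many locally finite derivations $D_0,\ldots,D_r$, Corollary~\ref{cor:equiv} $(3)\Rightarrow(1)$ gives $\dim_\K L<\infty$. Equivalently, one can cite Theorem~\ref{thm:main} directly, since $L$ is finitely generated.

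\emph{$(3)\Rightarrow(1)$.} With each $D_i$ locally finite and $\dim_\K L<\infty$, Proposition~\ref{prop:fd-direction} (Corollary~\ref{cor:equiv} $(1)\Rightarrow(2)$) shows that $L$ acts locally finitely on $A$. For each of $1,x_1,\ldots,x_n$ choose a finite-dimensional $L$-stable subspace containing it, and take their sum $U$. Then $U$ is finite-dimensional, contains $1,x_1,\ldots,x_n$, and is stable under every $D_i\in L$.

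There is no real obstacle here: the substantive step is $(2)\Rightarrow(3)$, which is exactly the rigidity theorem. The only point needing care is $(1)\Rightarrow(2)$, where one must pass from a single invariant space to invariance under all of $L$. This is handled by the stabilizer being a Lie subalgebra together with the product-space argument that yields exhaustion.
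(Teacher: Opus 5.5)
Your proof is correct and uses the same ingredients as the paper: the product spaces $U^{k}$ (the paper's $U^{\leq d}$), Proposition~\ref{prop:fd-direction} for $(3)\Rightarrow(1)$, and Theorem~\ref{thm:main} for $(2)\Rightarrow(3)$. The only difference is organizational: because you close a single cycle, your route from $(1)$ to $(3)$ passes through the rigidity theorem, whereas the paper proves $(1)\Rightarrow(3)$ directly and elementarily, getting $\dim_\K L\leq(\dim_\K U)^2$ from Lemma~\ref{lem:faithful-restriction}(3). Your own $(1)\Rightarrow(2)$ argument, which shows that $L$ acts locally finitely on $A$, would also give this directly via Corollary~\ref{cor:lf-subalg-fd}.
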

\begin{proof}
$(1)\Rightarrow(3)$. Since $D_i(U)\subseteq U$ for every $i$, the subspace $U$ is $L$-stable, and it contains the coordinate functions $x_1,\ldots,x_n$, which generate $\K[x_1,\ldots,x_n]$. Lemma~\ref{lem:faithful-restriction}(3) therefore gives an injective homomorphism $L\longrightarrow\End_\K(U)$, whence $\dim_\K L\leq(\dim_\K U)^2<\infty$.
For $d\geq0$, set
$$
U^{\leq d}:=\Span_\K\{u_1\cdots u_q:0\leq q\leq d,\ u_1,\ldots,u_q\in U\}.
$$
Each $U^{\leq d}$ is finite-dimensional. Since $D_i(U)\subseteq U$, the Leibniz rule gives $D_i(U^{\leq d})\subseteq U^{\leq d}$ for every $0\leq i\leq r$. Since $U$ contains $1,x_1,\ldots,x_n$, every polynomial belongs to $U^{\leq d}$ for some $d$. Thus every $D_i$ is locally finite on $\K[x_1,\ldots,x_n]$.

$(3)\Rightarrow(1),(2)$. Since $L$ is finite-dimensional and generated by the locally finite derivations $D_0,\ldots,D_r$, Proposition~\ref{prop:fd-direction} shows that $L$ acts locally finitely on $\K[x_1,\ldots,x_n]$. In particular, every element of $L$ is locally finite, giving~(2). (1) follows immediately.

$(2)\Rightarrow(3)$. Since $L$ is finitely generated and contained in $\LFD(\K[x_1,\ldots,x_n])$, Theorem~\ref{thm:main} gives $\dim_\K L<\infty$.
\end{proof}

\end{document}